\newtheorem{thm}{Theorem}[section]
\newtheorem*{mainthm}{Main Theorem}
\newtheorem{Notation}[thm]{Notation}
\newtheorem{prop}[thm]{Proposition}
\newtheorem{lem}[thm]{Lemma}
\newtheorem{cor}[thm]{Corollary}
\newtheorem{qst}[thm]{Question}
\theoremstyle{definition}
\theoremstyle{remark}
\newtheorem{rmk}[thm]{Remark}
\def\QED{\hfill$\square$}
\renewcommand{\emptyset}{\varnothing}
\renewcommand{\phi}{\varphi}
\newcommand{\coker}{\mathop{\mathrm{Coker}}\nolimits}
\renewcommand{\i}[1]{\mathfrak{#1}}
\newcommand{\p}{\i{p}}
\newcommand{\m}{\i{m}}
\newcommand{\depth}{\mathop{\mathrm{depth}}\nolimits}
\newcommand{\h}{\mathop{\mathrm{ht}}\nolimits}
\newcommand{\pd}{\mathop{\mathrm{pd}}\nolimits}
\renewcommand{\*}{\bullet}
\def\Lra{\Longrightarrow}
\def\lra{\longrightarrow}
\title{The projective dimension of three cubics is at most 5} 
\author{Paolo Mantero}
\address{University of Arkansas, Department of Mathematical Sciences, Fayetteville, AR 72701}
\email{pmantero@uark.edu}
\author{Jason McCullough}
\address{Iowa State University, Department of Mathematics, Ames, IA 50011}
\email{jmccullo@iastate.edu}
\begin{document}

\begin{abstract}
Let $R$ be a polynomial ring over a field and $I$ an ideal generated by three forms of degree three. Motivated by Stillman's question, 
Engheta proved that the projective dimension $\pd(R/I)$ of $R/I$ is at most 36, although the example with largest projective dimension he constructed has $\pd(R/I)=5$.  Based on computational evidence, it had been conjectured that $\pd(R/I)\leq 5$. In the present paper we prove this conjectured sharp bound.
\end{abstract}

\maketitle

\section{Introduction}
 Let $R = k[x_1,\ldots,x_N]$ be  a polynomial ring over a field $k$, and let $I = (f_1,\ldots,f_n)$ be a homogeneous ideal. The projective dimension of $R/I$ is one of the possible measures of the complexity of an ideal. Hilbert's Syzygy Theorem implies that $\pd(R/I) \leq N$; this was further refined in a celebrated theorem by Auslander and Buchsbaum, who proved $\pd(R/I)=N-\depth(R/I)$.  Motivated by computational efficiency issues, Stillman \cite{PS} asked whether one can find an upper bound for $\pd(R/I)$ solely based on the number of minimal generators of $I$ and their degrees. More precisely he asked:
\begin{qst}[{Stillman's question \cite[Problem 3.14]{PS}}]\label{Stillman} Is there a bound on $\pd(R/I)$ depending only on $d_1,\ldots,d_n$, and $n$, where $d_i = \deg(f_i)$?
\end{qst}
\noindent Additional motivation was provided by Caviglia, who showed the equivalence of Question \ref{Stillman} with the analogous question (\cite[Question~3.15]{PS}) for Castelnuovo-Mumford regularity. 

A positive answer to Question \ref{Stillman} in its full generality has been recently proved by Ananyan and Hochster \cite{AH2}.  However, these bounds are very large and in most cases not explicit, so the quest for optimal bounds is still wide open. The following are the known results in this direction:
\begin{itemize}
 \item When $I$ is generated by quadrics and ${\rm ht}(I) = 2$, then $\pd(R/I) \le 2n - 2$ and this bound is sharp \cite{HMMS1}. 
 \item When $I$ is generated by four quadrics, then $\pd(R/I) \le 6$ and this bound is sharp \cite{HMMS3}.
 \item When $I$ is generated by $3$ cubics, Engheta in a series of three papers \cite{En1} -- \cite{En3} proved $\pd(R/I) \le 36$. However the example with the largest projective dimension he could construct has $\pd(R/I)=5$.  
\end{itemize}
 During the special program in Commutative Algebra at the MSRI in 2012 -- 2013, strong computational efforts were made to find examples of ideals $I$ generated by 3 cubics with large projective dimension, which led to new classes of ideals with $\pd(R/I)=5$. The simplest of these examples is the ideal $I = (x^3, y^3, x^2a+xyb+y^2c)$ in the polynomial ring $K[x,y,a,b,c]$.  One checks that $x^2y^2 \in I:(x,y,a,b,c) - I$ and hence $\pd(R/I) = 5$.  However, no ideals were found with $\pd(R/I)>5$.  The main goal of this paper is to complete the project started by Engheta and show that $5$ is the optimal upper bound.
 
 Our main result is the following:
 
\begin{mainthm}\label{main}
Let $R$ by a polynomial ring over a field $k$, and let $I$ be generated by $3$ homogeneous polynomials of degree $3$.  Then ${\rm pd}(R/I)\leq 5$. 
\end{mainthm}

Since the projective dimension does not change under faithfully flat extensions, one may always assume $k=\overline{k}$ is algebraically closed. 
Our general approach is similar to Engheta's; we divide the proof first by height, then by multiplicity, then by considering the primary decompositions of certain ideals associated to $I$.  
The proofs are then necessarily long and technical; we believe this is a necessary price to pay to get an optimal bound.  Our paper uses several new tools not available to Engheta including:
structure theorems for ideals of low multiplicity that are primary to a linear prime of height two, which we develop in a separate paper \cite{MM};  similar structure theorems for ideals $L$ linked to $I^{un}$ (see Section 2 for the definitions); and considerations on Hilbert function and projective dimensions (e.g. Propositions~\ref{linkage}(d) and ~\ref{<=t}). 

 The paper is organized as follows.  In Section 2, we collect many techniques and results employed several times throughout the paper, we recall that $e(R/I)\leq 7$ and we present a table summarizing all the cases needed for the proof of the Main Theorem. 
  In Section 3 we prove that if $e(R/I)\leq 3$, then $\pd(R/I)\leq 4$. In Section 4 we prove the Main Theorem in the case $e(R/I) = 6$.  Section 5 contains the proofs of the cases $e(R/I)= 4$ and $e(R/I) = 5$.  

\section{Notation and general results}

Throughout this paper we use the following notation:

\begin{Notation}\label{not}
$R = \oplus_{i \ge 0} R_i$ is a standard graded polynomial ring over a field $k$, which, without loss of generality, we may assume is algebraically closed. Moreover, 
\begin{itemize}
\item $\m = \oplus_{i \ge 1} R_i$ is the unique homogeneous maximal ideal of $R$;
\item $I=(f_1,f_2,f_3)$ is generated by three forms with $\deg(f_i)=3$;
\item It is well-known that it suffices to prove the Main Theorem when $I$ has height two $($e.g. see \cite[Remark~2]{En1}$)$, so  after possibly taking linear combinations of the generators we may assume 
\[ \h(f_i,f_j)=2 \qquad \mbox{ for every }i\neq j;\]
\item $L=(f_1,f_2):f_3$ is a link of $I^{un}$ $($see discussion before Theorem \ref{linkage} for the definition of a link$)$;
\item $e(R/J)$ denotes the multiplicity of $R/J$;
\item we call an ideal $J$ {\em unmixed (of height $h$)} if ${\rm ht}(\mathfrak{p}) = h$ for every $\p\in {\rm Ass}(R/J)$.  
\item We denote by  $I^{un}$ the {\em unmixed part of $I$}, which is the intersection of the primary components of $I$ corresponding to the minimal primes of $I$ of minimal height. In particular, $I^{un}$ is an unmixed ideal, with $I\subseteq I^{un}$, ${\rm ht}(I)={\rm ht}(I^{un})$ and $e(R/I)=e(R/I^{un})$ $($see Prop. \ref{Ass}$)$.
\item For any homogeneous ideal $J$ and $n\in \mathbb{N}_0$, we denote by $[J]_n$ $([J]_{\ge n}$, resp.$)$ the $R$-ideal generated by all forms of degree $n$ $($at least $n$, resp.$)$ in $J$.
\end{itemize}
\end{Notation}
Furthermore, by a {\it quadric} we mean a form of degree 2, and by a {\it cubic} we mean a homogeneous polynomial of degree 3.

%

\begin{prop}[Associativity Formula]\label{Ass}
If $J$ is an ideal of $R$, then $$e(R/J)=\sum_{\substack{\rm{primes\,\, } \mathfrak{p} \supseteq J\\ \rm{ht}(\mathfrak{p}) = \rm{ht}(J)}}e(R/\mathfrak{p})\lambda(R_\mathfrak{p}/J_\mathfrak{p}),$$
where $\lambda(R_\mathfrak{p}/J_\mathfrak{p})$ denotes the length of the Artinian ring $R_\mathfrak{p}/J_\mathfrak{p}$.
\end{prop}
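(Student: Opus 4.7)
The plan is to combine a prime filtration of $R/J$ with the additivity of multiplicity, reducing the global identity to a local length computation at each minimal prime of minimal height. This is a classical argument (appearing in standard references such as Bruns--Herzog), so no real obstacle is expected; the delicate ingredient is the behavior of $e(-)$ on short exact sequences, which I isolate first.

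First, since $R/J$ is a finitely generated graded $R$-module over the Noetherian ring $R$, standard arguments produce a finite prime filtration
$$0 = M_0 \subsetneq M_1 \subsetneq \cdots \subsetneq M_s = R/J$$
with $M_i/M_{i-1} \cong (R/\p_i)(a_i)$ for some homogeneous primes $\p_i \supseteq J$ and integer shifts $a_i$. Next, I would invoke that $e(-)$, defined as $d!$ times the leading coefficient of the Hilbert polynomial of a module of dimension $d$, is additive on short exact sequences of modules of common dimension $d = \dim(R/J)$ and vanishes on modules of strictly smaller dimension (the latter contribute only to lower-order terms). Walking up the filtration gives
$$e(R/J) = \sum_{i\,:\,\dim(R/\p_i) = \dim(R/J)} e(R/\p_i) = \sum_{i\,:\,\h(\p_i) = \h(J)} e(R/\p_i),$$
where the second equality uses that $R$ is a polynomial ring, so $\h(\p) + \dim(R/\p) = \dim R$. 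The primes appearing on the right are then automatically minimal over $J$.

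Finally, I would identify the multiplicity of each such prime in the filtration. I claim that for a fixed prime $\p \supseteq J$ with $\h(\p) = \h(J)$, the number of indices $i$ with $\p_i = \p$ is exactly $\lambda(R_\p/J_\p)$. Localizing the whole filtration at $\p$, the subquotient $(R/\p_i)_\p$ vanishes whenever $\p_i \not\subseteq \p$; if $\p_i \subseteq \p$, then $J \subseteq \p_i \subseteq \p$ combined with the minimality of $\p$ over $J$ forces $\p_i = \p$, in which case $(R/\p)_\p = \kappa(\p)$ has length $1$ over $R_\p$. Thus the localized filtration is a composition series for $R_\p/J_\p$ whose length equals the number of $i$ with $\p_i = \p$. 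Regrouping the sum above by prime yields the asserted Associativity Formula.
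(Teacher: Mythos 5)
Your argument is correct. Note, however, that the paper offers no proof of this proposition at all: it is stated as the classical Associativity Formula and used as a known result (it is standard, see e.g.\ Bruns--Herzog or Nagata), so there is no in-paper argument to compare against. Your prime-filtration proof is exactly the textbook route, and the two delicate points are handled properly: the shift $(a_i)$ and the lower-dimensional subquotients affect only lower-order terms of the Hilbert polynomial, so additivity of the degree-$\dim(R/J)$ coefficient gives $e(R/J)=\sum e(R/\p_i)$ over the top-dimensional $\p_i$; and localizing the filtration at a minimal prime $\p$ of minimal height produces a composition series of $R_\p/J_\p$, so the multiplicity of $\p$ in the filtration is $\lambda(R_\p/J_\p)$. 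The only implicit hypotheses worth flagging are that $J$ is homogeneous (so that Hilbert polynomials are available, which is the setting of the paper) and that $R$ is a polynomial ring over a field, which you correctly invoke to convert between $\dim(R/\p)$ and $\h(\p)$.
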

Let $J=q_1\cap \ldots \cap q_r$ be the irredundant primary decomposition of an unmixed ideal $J$ of $R$, and let $\p_i=\sqrt{q_i}$. By the Associativity Formula, one has $e(R/J)=\sum_{i=1}^re_i\lambda_i$, where $e_i=e(R/\p_i)$ and $\lambda_i=\lambda((R/J)_{\p_i})$ for every $i$. 
We adopt the notation of Engheta \cite{En3} and say that $J$ is of type \[\langle\underline{e};\underline{\lambda}\rangle = \langle e_1,\ldots,e_r; \lambda_1,\ldots,\lambda_r\rangle.\] 
For instance, a prime ideal of multiplicity two is of type $\langle2;1\rangle$; an ideal of multiplicity three primary to a linear prime is of type $\langle 1;3 \rangle$; on the other hand, if $J$ is an unmixed ideal of type $\langle 1,2;\,3, 1\rangle$, then $J=q_1\cap q_2$, where $q_1$ is an ideal of multiplicity 3 primary to a linear prime and $q_2$ is a prime ideal of multiplicity two.

Table~\ref{table2} summarizes all the cases needed to prove the Main Theorem. On the leftmost column we list the seven possible values for the multiplicity $1\leq e(R/I)\leq 7$; the next column lists all the possible types of $I^{un}$, or when $e(R/I) = 5$ or $6$, the type of $L = (f_1,f_2):I$; the third column from the left contains the upper bound for $\pd(R/I)$ in that specific subcase; the fourth column contains the reference for the proof.
 
\begin{table}
\begin{tabular}{|l|l|l|l|}
\hline
{$\mathbf{e}(R/I)$} & 
$\langle\underline{e};\underline{\lambda}\rangle$ for $I^{un}$& Bound for $\pd(R/I)$ & Justification\phantom{lotsofextra words}\\
\hline\hline
{\bf 1}&$\langle1;1\rangle$&3& $I^{un}$ is CM (Thm \ref{pd+1}(i))\\
\hline
{\bf 2}&$\langle2;1\rangle$& 3 & $I^{un}$ is CM (Thm \ref{pd+1}(i))\\
&$\langle1;2\rangle$ & 4 & quadric in $I^{un}$ (Thm \ref{pd+1}(iii))\\
&$\langle1,1;1,1\rangle$& 4 & quadric in $I^{un}$ (Thm \ref{pd+1}(iii))\\
\hline
{\bf 3} & $\langle3;1\rangle$ & 3 & $I^{un}$ is CM (Thm \ref{pd+1}(i)) \\
&$\langle1;3\rangle$ & 4  &  Proved in \cite[Prop. 5.2]{MM} \\
&$\langle1,2;1,1\rangle$ & 4 & quadric in $I^{un}$ (Thm \ref{pd+1}(iii)) \\
&$\langle1,1;1,2\rangle$ & 4 & Prop \ref{L11;12} \\
&$\langle1,1,1;1,1,1\rangle$ & 4 & Prop \ref{111;111} \\
\hline
{\bf 4}&$\langle4;1\rangle$ & 5 & Proved in \cite[Sec. 2.1]{En3}\\ 
&$\langle2;2\rangle$ & 4& quadric in $I^{un}$ (Thm \ref{pd+1}(iii)) \\
&$\langle1;4\rangle$ & 5  & Proved in \cite[Prop. 5.4]{MM} \\
&$\langle1,3;1,1\rangle$ & 5  & Prop.~\ref{13;11} \\
&$\langle1,1;1,3\rangle$ & 5  & Prop.~\ref{11;13}\\
&$\langle2,2;1,1\rangle$ & 4  & quadric in $I^{un}$ (Thm \ref{pd+1}(iii))\\
&$\langle1,2;2,1\rangle$ & 5   & Prop. \ref{12;21} \\
&$\langle1,1;2,2\rangle$ & 5  &  Prop. \ref{11;22} \\
&$\langle1,1,2;1,1,1\rangle$ & 5 & Prop. \ref{112;111}\\
&$\langle1,1,1;1,1,2\rangle$ & 5  & Prop. \ref{111;112} \\
&$\langle1,1,1,1;1,1,1,1\rangle$ & 5  & Prop. \ref{1111;1111}\\
\hline
{\bf 7}& {\rm any}& 2  & $I$ is CM (Cor~\ref{e=7})\\
\hline
\end{tabular}\\
\vspace{.5cm}
\begin{tabular}{|l|l|l|l|}
\hline
{$\mathbf{e}(R/I)$} & 
$\langle\underline{e};\underline{\lambda}\rangle$ for $L$& Bound for $\pd(R/I)$ & Justification\phantom{lotsofextra words}\\
\hline\hline

{\bf 5}&$\langle4;1\rangle$ & 5 & Proved in \cite[Sec. 2.2]{En3} \\ 
&$\langle2;2\rangle$ & 4& quadric in $L$ (Thm \ref{pd+1}(iii)) \\
&$\langle1;4\rangle$ & 5  & Proved in \cite[Prop. 5.5]{MM} \\
&$\langle1,3;1,1\rangle$ & 5  & Prop.~\ref{L13;11} \\
&$\langle1,1;1,3\rangle$ & 5  & Prop.~\ref{L11;13} \\
&$\langle2,2;1,1\rangle$ & 4  & quadric in $L$ (Thm \ref{pd+1}(iii)) \\
&$\langle1,2;2,1\rangle$ & 5 & Prop. \ref{12;21}\\
&$\langle1,1;2,2\rangle$ & 5  &  Prop. \ref{11;22} \\
&$\langle1,1,2;1,1,1\rangle$ & 5 & Prop. \ref{112;111} \\
&$\langle1,1,1;1,1,2\rangle$ & 5 & Prop. \ref{111;112} \\
&$\langle1,1,1,1;1,1,1,1\rangle$ & 5  & Prop. \ref{1111;1111}\\
\hline
{\bf 6}&  $\langle3;1\rangle$ & 3 & $I^{un}$ is CM (Thm \ref{pd+1}(i)) \\
&$\langle1;3\rangle$ & 5 & Proved in \cite[Prop. 5.3]{MM} \\
&$\langle1,2;1,1\rangle$ & 4 & Proved in \cite[Sec. 2.3]{En3}\\
&$\langle1,1;1,2\rangle$ & 5  & Proved in \cite[Sec. 2.3]{En3} \\
&$\langle1,1,1;1,1,1\rangle$ & 4  & Prop \ref{111;111}\\

\hline

\end{tabular}
\caption{Bounds on $\pd(R/I)$ where $I$ is a height $2$ ideal generated by $3$ cubics}
\label{table2}
\end{table}

Next, we recall the notion of link of an unmixed ideal $J$. Given any homogeneous regular sequence $C\subseteq J$ of $\h(J)$ elements, the ideal $L=C:J$ is called a {\em a link of $J$} ({\em by} $C$), and the ideals $J$ and $L$ are said  to be {\it linked} by $C$, denoted $J\stackrel{C}{\sim} L$ (or simply $J\sim L$). It is known that $L$ is unmixed of $\h(L)=\h(J)$ and $C:L=J$ \cite{PSz}.
Moreover, the following linkage results are well-known and will be used frequently.
\begin{thm}\label{linkage}
If $J\stackrel{C}{\sim}H \stackrel{D}{\sim} L$, then
\begin{itemize}
\item[$($a$)$] $($\cite{PSz}$)$ $R/J$ is Cohen-Macaulay if and only if $R/H$ is Cohen-Macaulay;
\item[$($b$)$] $($\cite{PSz}$)$ $e(R/J)+e(R/H)=e(R/C)$;
\item[$($c$)$] $($\cite{Du}, \cite[Theorem~1.1]{AN}$)$ $\pd(R/J)=\pd(R/L)$;
\item[$($d$)$] $($cf. \cite[Proposition~5.1]{MM}$)$ Write $C=(c_1,\ldots,c_g)$ and $D=(d_1,\ldots,d_g)$. If, up to reordering, $\deg(c_i)=\deg(d_i)$ for every $i$, then $R/J$ and $R/L$ have the same Hilbert function. In particular, the minimal degree of a minimal generator and the number of generators of minimal degree are the same for $J$ and $L$.
\end{itemize}
\end{thm}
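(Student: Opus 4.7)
The common thread across all four parts is the fundamental duality of linkage. If $J \stackrel{C}{\sim} H$ with $C$ a regular sequence of length $g = \h(J)$, then $R/C$ is Gorenstein of dimension $\dim R - g$, and one has canonical isomorphisms of $R/C$-modules
\[ H/C \iso \hom_{R/C}(R/J,\, R/C), \qquad J/C \iso \hom_{R/C}(R/H,\, R/C). \]
The plan is to derive each of (a)--(d) from this duality, together with the depth lemma, the associativity formula (Prop.~\ref{Ass}), and graded mapping cones.

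For (a), since $R/C$ is its own canonical module, the functor $\hom_{R/C}(-, R/C)$ preserves the Cohen--Macaulay property for maximal-dimensional modules. Because $J$ and $H$ are unmixed of height $g$, both $R/J$ and $R/H$ are maximal-dimensional over $R/C$, and combining this observation with the short exact sequence $0 \to H/C \to R/C \to R/H \to 0$ and the depth lemma yields $R/J$ Cohen--Macaulay iff $R/H$ Cohen--Macaulay. For (b), additivity of multiplicity on the same short exact sequence gives $e(R/C) = e(H/C) + e(R/H)$. Localizing the duality isomorphism $H/C \iso \hom_{R/C}(R/J, R/C)$ at each minimal prime of $R/C$ containing $J$ reduces to Matlis duality over an Artinian Gorenstein local ring, so $\lambda((H/C)_\p) = \lambda((R/J)_\p)$; the associativity formula then gives $e(H/C) = e(R/J)$, completing (b).

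For (c), the strategy is to compare minimal graded free resolutions via the Ferrand--Peskine--Szpiro mapping cone construction: a resolution of $R/H$ is produced from one of $R/J$ by dualizing, shifting, and splicing with the Koszul complex on $C$, and re-applying the construction for the second link $H \stackrel{D}{\sim} L$ returns a resolution of the same length as the resolution of $R/J$. Equivalently, the Auslander--Buchsbaum formula reduces the claim to $\depth(R/J) = \depth(R/L)$, and this equality of depths under double linkage is the content of the cited theorems.

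For (d), I would refine the mapping-cone analysis of (c) by tracking internal degrees. Each mapping-cone step for $J \stackrel{C}{\sim} H$ shifts graded components by amounts determined by $\deg(c_1), \ldots, \deg(c_g)$, while the mapping cone for the second link contributes the inverse shifts determined by $\deg(d_1), \ldots, \deg(d_g)$; the hypothesis that these degrees coincide up to reordering forces the shifts to cancel, so the graded Betti numbers of $R/J$ and $R/L$ contribute identical terms to the Hilbert series. The last sentence of (d) is then automatic, since graded algebras with equal Hilbert functions necessarily have the same number of minimal generators in every degree, and in particular in the lowest degree in which either is generated. The main obstacle I anticipate is the careful bookkeeping of the graded shifts in (d)--one must verify that dualizing the resolution of $R/J$ over the Koszul complex on $C$ and then redualizing over $D$ indeed produces the reciprocal shifts--whereas parts (a)--(c) follow comparatively smoothly once the duality isomorphism above is in place.
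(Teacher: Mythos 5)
The paper does not actually prove Theorem~\ref{linkage}: all four parts are quoted from the literature ((a),(b) from Peskine--Szpiro, (c) from Dubreil and Artin--Nagata, (d) from \cite[Proposition~5.1]{MM}), so there is no in-paper argument to compare against. Your duality-based sketch is the standard route taken in those sources, and parts (b) and (c) are essentially right: additivity of $e$ on $0 \to H/C \to R/C \to R/H \to 0$ plus Matlis duality at the minimal primes gives (b), and (c) reduces via Auslander--Buchsbaum to the invariance of depth in an even linkage class, which is exactly the cited content.

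Two points need repair. In (a), the depth lemma applied to $0 \to H/C \to R/C \to R/H \to 0$ with $H/C$ maximal Cohen--Macaulay only yields $\depth(R/H) \ge \dim(R/H) - 1$, which does not close the argument. The standard fix is to use the \emph{other} exact sequence $0 \to J/C \to R/C \to R/J \to 0$: if $R/J$ is Cohen--Macaulay then $J/C$ is maximal Cohen--Macaulay over the Gorenstein ring $S=R/C$, so $\ext^1_S(R/J,S)=0$, dualizing identifies $(J/C)^{\vee} \iso R/H$, and the dual of a maximal Cohen--Macaulay module over $S$ is again maximal Cohen--Macaulay; symmetry of linkage gives the converse. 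In (d), your closing claim that graded algebras with equal Hilbert functions have the same number of minimal generators \emph{in every degree} is false in general (the count in degree $n$ is $\dim [J]_n - \dim[\m J]_n$, and $\dim[\m J]_n$ is not a Hilbert-function invariant); it is true only in the minimal degree, where $[\m J]_t = 0$ --- which fortunately is all the theorem asserts. For the main assertion of (d), rather than bookkeeping two mapping cones, it is cleaner to note that $J/C \iso \ext^g_R(R/H,R)(-\sigma) \iso L/D$ as graded modules, where $\sigma = \sum_i \deg(c_i) = \sum_i \deg(d_i)$, and that $R/C$ and $R/D$ have the same Hilbert function; then $H_{R/J} = H_{R/C} - H_{J/C} = H_{R/D} - H_{L/D} = H_{R/L}$.
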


We also recall previous results of Engheta which will be employed. Recall that $J^{un}$ denotes the unmixed part of an ideal $J$.
\begin{thm}\label{pd+1}
Let $I$ and $L$ be as in Notation \ref{not}. Then 
\begin{enumerate}[(i)]
\item $($\cite[Theorem 7]{En1}$)$ $\pd(R/I)\leq \pd(R/L)+1$. In particular, if $I^{un}$ or $L$ is Cohen-Macaulay, then $\pd(R/I)\leq 3$.
\item If either $I^{un}$ or $L$ contains a linear form, then $\pd(R/I)\leq 3$.
\item $($\cite[Theorems~16~and~17]{En1}$)$ If either $I^{un}$ contains a quadric, or $e(R/I)\leq 5$ and $L$ contains a quadric, then $\pd(R/I)\leq 4$.
\end{enumerate}
\end{thm}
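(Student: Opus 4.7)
The plan is to prove the three parts in order, using a single short exact sequence as the workhorse. Since $L=(f_1,f_2):f_3$, multiplication by $f_3$ gives
\[
0 \to (R/L)(-3) \xrightarrow{\,\cdot f_3\,} R/(f_1,f_2) \to R/I \to 0.
\]
The ideal $(f_1,f_2)$ is a complete intersection of height $2$, so $\pd(R/(f_1,f_2))=2$. Since $L$ is unmixed of height $2$ (as the link of $I^{un}$ by $(f_1,f_2)$), the Auslander--Buchsbaum formula gives $\pd(R/L)\ge \h(L)=2$. A standard mapping cone (or long exact sequence in $\tor$) argument then yields $\pd(R/I)\le \max\{\pd(R/L)+1,\, 2\}=\pd(R/L)+1$, proving part (i). For the concluding assertion of (i), if either $I^{un}$ or $L$ is Cohen--Macaulay, then Theorem~\ref{linkage}(a) ensures the other is too, whence $\pd(R/L)=\h(L)=2$ and $\pd(R/I)\le 3$.

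For part (ii), I would establish the key claim: \emph{any unmixed height-$2$ ideal $J\subseteq R$ that contains a linear form $\ell$ is a complete intersection}, hence Cohen--Macaulay. Indeed, $\ell$ must lie in every associated prime of $J$, so the image $\bar J \subseteq R/(\ell) \cong k[x_1,\ldots,x_{N-1}]$ is an unmixed ideal of height $1$ in a polynomial ring. Because polynomial rings are UFDs, every unmixed height-$1$ ideal is principal: $\bar J=(\bar h)$ for some $h\in R$. Thus $J=(\ell,h)$ is a complete intersection and $\pd(R/J)=2$. Applying this to $J=I^{un}$ or $J=L$ and invoking part (i) gives $\pd(R/I)\le 3$.

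Part (iii) is the most delicate and is the main obstacle. The strategy is to exploit the additional structure imposed by a quadric $q$ lying in the unmixed ideal. Over the algebraically closed field $k$, either $q$ is irreducible (so $(q)$ is a height-$1$ prime) or $q=\ell_1\ell_2$ splits into linear forms (possibly equal). In the \emph{reducible} case, each associated prime of the unmixed ideal contains $\ell_1$ or $\ell_2$, yielding a decomposition $I^{un}=J_1\cap J_2$ with $\sqrt{J_i}\ni\ell_i$; one then combines part~(ii)-type reductions (using a linear form in each piece) with the multiplicity bookkeeping supplied by Proposition~\ref{Ass}. In the \emph{irreducible} case, every associated prime of $I^{un}$ properly contains $(q)$, and the bound $e(R/I^{un})\le 7$ severely limits the primary decomposition, permitting a tailored exact-sequence analysis factoring through $(q)+I$. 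For the statement concerning $L$, the hypothesis $e(R/I)\le 5$ forces $e(R/L)=e(R/(f_1,f_2))-e(R/I^{un})=9-e(R/I)\ge 4$, so a parallel case analysis applies to $L$.

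The hard part is precisely this part (iii): the detailed bookkeeping that ensures in every subcase a free resolution of length at most $3$ can be built for $R/I^{un}$ (respectively $R/L$), from which part (i) delivers the desired bound $\pd(R/I)\le 4$. This is exactly the content of \cite[Theorems~16 and~17]{En1}, which I would simply invoke.
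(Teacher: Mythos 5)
Your proposal is correct, and for part (ii) it takes a genuinely different route from the paper. For (i), your short exact sequence $0 \to (R/L)(-3) \to R/(f_1,f_2) \to R/I \to 0$ together with $\pd(R/L)\geq \h(L)=2$ is exactly the standard argument behind Engheta's Theorem~7, which the paper simply cites; your handling of the ``in particular'' clause via Theorem~\ref{linkage}(a) matches the paper's logic. For (iii), both you and the paper ultimately just invoke Engheta's Theorems~16 and~17; your preliminary sketch of how such a proof would go is inessential (and too vague to stand on its own), but since you explicitly fall back on the citation this is not a gap. The real divergence is in (ii): the paper splits into cases, citing Engheta's Proposition~6 when $I^{un}$ contains a linear form and, when $L$ contains a linear form, running a multiplicity count ($e(R/L)\leq 3$, then treating $e(R/L)\leq 2$ and $e(R/L)=3$ separately via Proposition~\ref{non-deg}(2) and Engheta's Lemma~8). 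You instead prove a single structural claim --- an unmixed height-$2$ ideal containing a linear form $\ell$ is a complete intersection $(\ell,h)$, because its image in the UFD $R/(\ell)$ is unmixed of height $1$ and hence principal --- and apply it uniformly to $I^{un}$ and $L$. Your claim is true (unmixedness is exactly what makes the height-one image an intersection of symbolic powers of principal primes, hence principal), and it yields a cleaner, self-contained, citation-free proof of (ii) that in fact subsumes both of the paper's subcases; what the paper's version buys is only brevity, by leaning on results already in \cite{En1}.
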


\begin{proof} (i) and (iii) are proved in the referenced results. (ii) was proved by B. Engheta when $I^{un}$ contains a linear form, \cite[Proposition 6]{En1}. So, assume $L$ contains a linear form. Then $L$ contains a complete intersection of degrees $1$ and $3$. Hence $e(R/L)\leq 3$. When $e(R/L)\leq 2$, either $L$ or an ideal linked to $L$ has multiplicity $1$, therefore it is Cohen-Macaulay (e.g. by Proposition \ref{non-deg}(2)).  If $e(R/L)=3$, then $L$ is a complete intersection (e.g. by \cite[Lemma~8]{En1}). In either case we obtain $\pd(R/I)\leq 3$ by part (i). 
\end{proof}

We recall here a standard application of the Depth Lemma:
\begin{lem}\label{ses}
Let $J$ be an ideal in a polynomial ring $R$.
\begin{itemize}
\item[$($1$)$] If $h$ is any element of $R$, then 
\[\pd (R/J) \leq \max \{ \pd (R/(J:h)), \pd (R/(J+(h)))\}.\]
\item[$($2$)$] If $J=J_1\cap J_2$, then 
\[\pd(R/J)\leq \max \{ \pd (R/J_1), \pd(R/J_2), \pd (R/(J_1+J_2))-1\}.\]
\end{itemize}
\end{lem}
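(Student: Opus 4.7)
The plan is to reduce both statements to the standard fact that in a short exact sequence of finitely generated graded $R$-modules $0 \to A \to B \to C \to 0$, one has $\pd(B) \le \max\{\pd(A),\pd(C)\}$ and $\pd(A) \le \max\{\pd(B),\pd(C)-1\}$; these follow immediately from the long exact sequence of $\tor$ (or, equivalently, from the Depth Lemma together with the Auslander--Buchsbaum formula).

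For part (1), the key observation is that multiplication by $h$ fits into the short exact sequence
\[
0 \lra R/(J:h) \stackrel{\cdot h}{\lra} R/J \lra R/(J+(h)) \lra 0,
\]
where the kernel of the surjection $R/J \onto R/(J+(h))$ is generated by the image of $h$ and is annihilated by $J:h$. Applying the first of the two inequalities above to this sequence with $B = R/J$ gives the claimed bound.

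For part (2), the plan is to use the Mayer--Vietoris-style short exact sequence
\[
0 \lra R/(J_1\cap J_2) \lra R/J_1 \oplus R/J_2 \lra R/(J_1+J_2) \lra 0,
\]
where the left map is the diagonal and the right map is the difference of the two projections. Applying the second inequality above with $A = R/(J_1\cap J_2)$, $B = R/J_1 \oplus R/J_2$, and $C = R/(J_1+J_2)$, together with the standard identity $\pd(R/J_1 \oplus R/J_2) = \max\{\pd(R/J_1),\pd(R/J_2)\}$, yields the stated inequality.

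Neither part presents a real obstacle; the only mild subtlety is keeping track of the correct direction of the shift by $1$ in part (2), which comes from the fact that $R/(J_1\cap J_2)$ appears on the left rather than in the middle of the sequence. Both statements are routine consequences of general homological algebra once the correct short exact sequence is identified.
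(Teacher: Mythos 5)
Your proof is correct and is exactly the argument the paper intends: the lemma is stated there as ``a standard application of the Depth Lemma,'' and the two short exact sequences you write down, $0 \to R/(J:h) \stackrel{\cdot h}{\to} R/J \to R/(J+(h)) \to 0$ and $0 \to R/(J_1\cap J_2) \to R/J_1 \oplus R/J_2 \to R/(J_1+J_2) \to 0$, are precisely the ones the authors invoke repeatedly in later proofs (e.g.\ in Lemma~\ref{l_i} and Proposition~\ref{13;11}). The bookkeeping of the shift by $1$ in part (2) is handled correctly, so there is nothing to add.
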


Recall that a {\it linear prime} is a prime ideal generated by linear forms. Since $k=\overline{k}$, the following classical results can be employed
\begin{prop}(cf. \cite[Proposition~0]{EH2}, \cite[Theorem~1]{EH2})\label{non-deg}
Let $\mathfrak{p}$ be a homogeneous prime ideal 
\begin{enumerate}
\item If $e(R/\p)\leq \h(\p)$, then $\p$ contains a linear form.
\item In particular, if $e(R/\p)=1$, then $\p$ is a linear prime; if $e(R/\p)=\h(\p)=2$, then $\p=(\ell,q)$ for a linear form $\ell$ and an irreducible quadric $q$, so $\p$ is a complete intersection.
\item (Del-Pezzo Bertini) If $\h(\p)=2$ and $e(R/\p)=3$, then $R/\p$ is Cohen-Macaulay.
\end{enumerate}
\end{prop}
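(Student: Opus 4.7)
Since these statements are classical, the plan is to reduce each part to well-known facts from projective algebraic geometry. For a homogeneous prime $\p \subset R = k[x_1,\ldots,x_N]$, the associated projective variety $V(\p) \subseteq \mathbb{P}^{N-1}$ is irreducible, satisfies $\deg V(\p) = e(R/\p)$, and has codimension $\h(\p)$ in $\mathbb{P}^{N-1}$; moreover, $\p$ contains a linear form if and only if $V(\p)$ is contained in a hyperplane, i.e.\ is \emph{degenerate}.

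For (1), I would invoke the classical non-degenerate degree bound: an irreducible non-degenerate projective variety $X \subseteq \mathbb{P}^{N-1}$ satisfies $\deg X \geq \operatorname{codim} X + 1$. Applied to $V(\p)$, the contrapositive reads: if $e(R/\p) \leq \h(\p)$, then $V(\p)$ must be degenerate, hence $\p$ contains a linear form.

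For (2), when $e(R/\p) = 1$, part (1) provides a linear form $\ell \in \p$; then $\p/(\ell)$ is a prime of $R/(\ell) \cong k[x_1,\ldots,x_{N-1}]$ with multiplicity still $1$ and height reduced by one, so induction on $\h(\p)$ shows $\p$ is generated by linear forms. When $e(R/\p) = \h(\p) = 2$, the same reduction yields $\ell \in \p$; in $R/(\ell)$ the image $\p/(\ell)$ is a height-one prime in a polynomial ring, hence principal, generated by an irreducible form whose degree equals its multiplicity, namely an irreducible quadric $q$. Thus $\p = (\ell, q)$, and since $R$ is a domain the pair $\ell, q$ is a regular sequence, so $\p$ is a complete intersection.

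For (3), the hypothesis $\h(\p) = 2$, $e(R/\p) = 3$ places $V(\p)$ in the class of \emph{varieties of minimal degree} (those meeting the inequality of (1) with equality, $\deg = \operatorname{codim} + 1$). The Del Pezzo--Bertini classification then identifies $V(\p)$ as a quadric hypersurface, a rational normal scroll, a Veronese surface, or a cone over one of these, all of which are arithmetically Cohen-Macaulay; hence $R/\p$ is Cohen-Macaulay. The main obstacle is this last step: the arithmetic Cohen-Macaulayness of varieties of minimal degree is a nontrivial classical theorem, and rather than reprove the full classification I would simply cite the references in Eisenbud-Harris.
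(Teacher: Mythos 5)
Your proposal is correct and matches the paper's treatment: the paper gives no proof of this proposition, simply citing the same classical facts from Eisenbud--Harris (the lower bound $\deg \geq \operatorname{codim}+1$ for irreducible non-degenerate varieties and the Del Pezzo--Bertini classification of varieties of minimal degree) that your sketch reduces to. The only substantive content you add, the induction on linear forms in part (2) and the observation that a height-one prime in a polynomial ring is principal, is sound and is exactly the standard way to extract those consequences.
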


\subsection{Additional results} We now collect several results that will be used throughout the paper. 

\begin{rmk}\label{observ}
$($a$)$ The cubics in $I^{un}$ and $L$ 
generate an ideal of height two; moreover $I^{un}$ contains at least three linearly independent cubics.

\noindent $($b$)$ If $f\in R$ and $J_1,J_2$ are ideals, then 
\[J_1\cap \left(J_2+(f)\right)=J_1\cap \big[J_2 + f\big((J_1+J_2):f\big)\big].\]

\noindent $($c$)$ If every cubic in $I^{un}$ can be written in terms of at most $t$ linear forms, then $\pd(R/I)\leq t$.  In this case $I$ is extended from a $t$-variable polynomial ring.  The result then follows from Hilbert's Syzygy Theorem and the fact that polynomial extensions are flat.

\noindent $($d$)$ If $C$ is a complete intersection of height $2$ inside $I$, then $C:I=C:I^{un}$.

\noindent $($e$)$ If $J_1,J_2$ are ideals and $f\in R$, then $(fJ_1 + J_2):f=J_1+(J_2:f)$.

\noindent $($f$)$ If $J_1,J_2$ are ideals and $f\in J_1$, then $(J_1 \cap J_2)+(f)=J_1\cap (J_2+(f))$
\end{rmk}

In the next result we generalize Remark~\ref{observ}(c) to the case where every generator of either $I^{un}$ or an ideal linked to it can be written in terms of a regular sequence of at most $t$ forms:
\begin{prop}\label{<=t}
Let $J$ be an almost complete intersection ideal of height $g$ in $R=k[x_1,\ldots,x_n]$. If there exists a regular sequence of forms $h_1,\ldots,h_t$ such that either $J^{un}$ or some ideal linked to $J^{un}$ is extended from $A=k[h_1,\ldots,h_t]$, then $\pd(R/J)\leq {\rm max}\{g+1,t\}$.
\end{prop}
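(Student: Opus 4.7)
My strategy is to reduce both hypotheses to a uniform statement: there is a link $L^*$ of $J^{un}$ that is \emph{extended} from $A$, meaning $L^*=L^*_0R$ for some $L^*_0\subseteq A$, and satisfies $\pd_R(R/L^*)\leq \max\{g,t-1\}$. Granting this, the conclusion follows by combining the standard short exact sequence coming from the almost-complete-intersection structure on $J$ with the double linkage $L\sim J^{un}\sim L^*$, where $L:=C:J$ is any ACI-link of $J$, and applying Theorem~\ref{linkage}(c).

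\textbf{Key steps.} First, the inclusion $A\hookrightarrow R$ is flat by miracle flatness: $h_1,\ldots,h_t$ is a regular sequence of forms in the graded Cohen--Macaulay ring $R$, and $\dim A+\dim(R/\m_A R)=t+(n-t)=n=\dim R$. Consequently $\pd_R(R/N_0R)\leq\pd_A(A/N_0)$ for any ideal $N_0\subseteq A$, and formation of colon commutes with extension. Next I construct $L^*$: if some ideal linked to $J^{un}$ is already extended, I take $L^*$ to be that ideal; if instead $J^{un}=J_0R$ is extended, flatness forces $\h_A(J_0)=g$, so by prime avoidance in the polynomial ring $A$ I pick a homogeneous regular sequence $C_0=(c_1,\ldots,c_g)\subseteq J_0$ and set $L^*:=C_0R:J^{un}=(C_0:J_0)R$. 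The critical step is to show $L^*_0$ is unmixed of height $g$ in $A$: the flat base-change formula $\ass_R(R/L^*)=\bigcup_{\p\in\ass_A(A/L^*_0)}\ass_R(R/\p R)$ together with the identity $\h_R(\q)=\h_A(\p)$ for any minimal prime $\q$ of $\p R$ (a direct consequence of flatness, since $R_\q/\p R_\q$ is Artinian) forces $\h_A(\p)=g$ for every $\p\in\ass_A(A/L^*_0)$. Once $L^*_0$ is unmixed of height $g$ in $A$, the two cases $g=t$ and $g<t$ are immediate: if $g=t$ then $A/L^*_0$ is Artinian Cohen--Macaulay and $\pd_A(A/L^*_0)=g$; if $g<t$ then $\m_A\notin\ass_A(A/L^*_0)$, so $\depth(A/L^*_0)\geq 1$ and Auslander--Buchsbaum yields $\pd_A(A/L^*_0)\leq t-1$. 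Either way, $\pd_R(R/L^*)\leq\max\{g,t-1\}$ by flatness.

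\textbf{Conclusion and obstacle.} Choose $C=(f_1,\ldots,f_g)\subseteq J$ a complete intersection of height $g$ (possible since $J$ is an ACI of height $g$), write $J=C+(f_{g+1})$, and set $L:=C:J$. The standard linkage identity $J^{un}=C:(C:J)$, the general version of Remark~\ref{observ}(d), gives $L=C:J^{un}$, so $L$ is also a link of $J^{un}$. The double linkage $L\sim J^{un}\sim L^*$ together with Theorem~\ref{linkage}(c) gives $\pd(R/L)=\pd(R/L^*)\leq\max\{g,t-1\}$. Finally, the short exact sequence $0\to R/L\to R/C\to R/J\to 0$ (the first map multiplication by $f_{g+1}$, using $C:f_{g+1}=C:J=L$) yields $\pd(R/J)\leq\max\{\pd(R/C),\pd(R/L)+1\}=\max\{g,\pd(R/L)+1\}\leq\max\{g+1,t\}$, as claimed. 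The main obstacle is the descent of unmixedness from $R$ to $A$: the naive bound $\pd_A(A/L^*_0)\leq t$ from Hilbert's syzygy theorem in $A$ would only give $\pd(R/J)\leq t+1$, one higher than the stated bound, and the saving of one comes precisely from the depth-one improvement available to unmixed ideals of height $g<t$ in the polynomial ring $A$.
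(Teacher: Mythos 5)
Your proof is correct and takes essentially the same route as the paper's: reduce to a link of $J^{un}$ that is extended from $A$, use the fact that its contraction is unmixed of height $g$ in the $t$-variable ring $A$ to get $\pd_A\leq t-1$ via Auslander--Buchsbaum (or Cohen--Macaulayness when $t=g$), and transfer back through $\pd(R/J)\leq\pd(R/L)+1$ together with Theorem~\ref{linkage}(c). The only differences are that you supply details the paper leaves implicit (flatness of $A\hookrightarrow R$ and the descent of unmixedness from $R$ to $A$ via associated primes) and that you re-derive Engheta's inequality from the short exact sequence $0\to R/L\to R/C\to R/J\to 0$ rather than citing it.
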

\begin{proof}
If $J^{un}$ is an $R$-ideal extended from $A$, we can write $J^{un}=J_0R$ for some $A$-ideal $J_0$. Let $C\subseteq J_0$ be a complete intersection of height $g$ and set $L_0=C:_AJ_0$ and $L'=L_0R$, then $J^{un}\sim L'$. So under either assumption $J^{un}$ is linked to an ideal $L'=L_0A$ extended from $A=k[h_1,\ldots,h_t]$, and since $\h(L_0)=g$, then $t\geq g$. If $t=g$, then $L_0$ is Cohen-Macaulay and so is $L'$, thus $\pd(R/J)\leq g+1$ by \cite[Theorem 7]{En1}. If $t\geq g+1$, since $L_0$ is unmixed of height $g$ in $A$,  then ${\rm depth}(A/L_0)\geq 1$, which by the Auslander-Buchsbaum formula yields $\pd(R/L')=\pd(A/L_0)\leq t-1$. By \cite[Theorem 7]{En1}, we have $\pd(R/J)\leq \pd(R/L')+1\leq t$.
\end{proof}


Recall that $[B]_n$ denotes the ideal generated by all forms of degree $n$ in a given homogeneous ideal $B$. The following lemma will be employed several times to obtain a more explicit description of certain intersections.
\begin{lem}\label{cubic}
Let $H$ be a homogeneous $R$-ideal.
\begin{enumerate}[(i)]
\item Let $K'$ be another homogeneous ideal, $f\in R_3$ and $K=(f)+K'$. \\
If $f\notin H+K'$, then $\left[K\cap H\right]_3 = \left[K'\cap H\right]_3$. \\
If $f\in H+K'$, then there exists $f'\in R_3$ such that $$[K \cap H]_3 =  \left[(f')+(K' \cap H)\right]_3.$$

\item Let $\p=(x,y)$ be a linear prime, let $H\not\subseteq \p$ be an ideal, and let $a,b$ be forms with 
${\rm ht}(x,y,a,b)=4$. If $ax+by\in \p^2+H$, then 
\[\big(\p^2+(ax+by)\big)\cap H = \big(a'x+b'y\big)+\big(\p^2\cap H\big), \]
where $a',b'$ are forms with $\deg(a')=\deg(b')=\deg(a)$  and $(x,y,a',b')=(x,y,a,b)$.
\end{enumerate}

\end{lem}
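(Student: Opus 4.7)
The plan is to prove both parts by elementary manipulations that exploit homogeneity. The key observation underlying part (i) is that, because $f$ itself has degree three, any cubic $g \in K = (f) + K'$ admits a representation $g = \alpha f + k$ with $\alpha \in k$ a scalar and $k$ a cubic in $K'$: higher-degree multiples of $f$ cannot contribute to a cubic, and the positive-degree parts of any ring element multiplying $f$ can be absorbed into $K'$.

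For the first case of (i), the inclusion $[K \cap H]_3 \supseteq [K' \cap H]_3$ is automatic from $K' \subseteq K$. For the reverse, given a cubic $g \in K \cap H$, the representation $g = \alpha f + k$ gives $\alpha f = g - k \in H + K'$, and the hypothesis $f \notin H + K'$ forces $\alpha = 0$, so $g = k \in K' \cap H$. For the second case, homogeneity of $H$ and $K'$ lets me pick a cubic $f' \in H$ with $f - f' \in K'$ (take the degree-three homogeneous components of any decomposition $f = h + k'$); then $f' \in K \cap H$, which gives $\supseteq$. For $\subseteq$, repeat the decomposition $g = \alpha f + k = \alpha f' + \bigl(\alpha(f - f') + k\bigr)$ and note that the parenthetical term lies in $K'$ by construction and in $H$ because it equals $g - \alpha f'$, so $g \in (f') + (K' \cap H)$.

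For part (ii), homogeneity lets me write $ax + by = p + h$ with $p \in \p^2$ and $h \in H$ both forms of degree $\deg(a) + 1$. Since $p \in (x,y)^2$, it expands as $p = \alpha x + \beta y$ with $\alpha, \beta \in \p$ homogeneous of degree $\deg(a)$. Setting $a' := a - \alpha$ and $b' := b - \beta$, one has $a'x + b'y = h \in H$ by construction, while $a' \equiv a$ and $b' \equiv b$ modulo $\p$ immediately gives $(x,y,a',b') = (x,y,a,b)$. The inclusion $\supseteq$ of the main formula is then immediate, since $a'x + b'y \in H$ and $a'x + b'y \equiv ax + by \pmod{\p^2}$. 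For $\subseteq$, take $g \in (\p^2 + (ax+by)) \cap H$ and write $g = q + r(ax+by)$ with $q \in \p^2$; substituting $ax + by = (a'x + b'y) + (\alpha x + \beta y)$ and absorbing $r(\alpha x + \beta y) \in \p^2$ into $q$ yields $g = \tilde{q} + r(a'x + b'y)$ with $\tilde{q} \in \p^2$. Since $g$ and $r(a'x + b'y)$ both lie in $H$, so does $\tilde{q}$, placing it in $\p^2 \cap H$.

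The arguments are essentially routine bookkeeping; the only real obstacle is tracking which pieces live in which ideal after each substitution. It is worth noting that the height hypothesis $\h(x,y,a,b) = 4$ in part (ii) plays no role in the manipulations themselves and is recorded only because the lemma is later applied in contexts where this condition controls the prime decomposition of the ambient ideals.
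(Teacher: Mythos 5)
Your proof is correct and follows essentially the same route as the paper's: both parts come down to taking homogeneous components of a decomposition and replacing $f$ (resp.\ $ax+by$) by a representative $f'$ (resp.\ $a'x+b'y$) lying in $H$, after which the equality of ideals is the modular law, which you verify by hand at the element level. Your closing observation that the hypothesis $\h(x,y,a,b)=4$ is not used in the argument is also accurate; the paper's proof does not use it either.
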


\begin{proof} 
%
(i) If $f\notin K'+H$, then $I'=(K'+H):f\subseteq \m$, and thus $fI'$ is generated in degree at least four. Applying Remark \ref{observ}(b) we obtain $[K\cap H]_3=[(K'+(f)I') \cap H]_3$ and since $[(f)I']_3=[0]_3$, it follows that $[K\cap H]_3=[K' \cap H]_3$.
If $f\in K' + H$, then there exists $F\in [K']_3$ such that $f'=f+F\in H$, thus $f'\in K\cap H$ and $K=(f')+K'$. By the modularity law, $K\cap H=(f') + (K'\cap H)$ and the statement follows.

%

(ii) Set $K=\p^2+(ax+by)$. By assumption we can write $ax+by=g + h$ for some $g\in \p^2$ and $h\in H$. Then $h=ax+by-g\in K\cap H$; writing $g=r_1x^2+r_2xy+r_3y^2$ for forms $r_i\in R$, we see $h=a'x+b'y$, where $a'=a-r_1x-r_2y$ and $b'=b-r_3y$. In particular, $(a',b',x,y)=(a,b,x,y)$. Moreover, since $a'x+b'y=ax+by$ modulo $\p^2$, we can write $K=\p^2+(a'x+b'y)$. By the modularity law we now obtain
$$J=\big[\p^2+(a'x+b'y)\big]\cap H=(a'x+b'y)+\big[\p^2\cap H\big].$$
\end{proof}

Next, we prove an upper bound of $\pd(R/J)\leq 4$ when $J$ is generated by 2 quadrics and one cubic. This is sharp since $\pd(R/(x^2,y^3, ax+by))=4$, where $R = K[x,y,a,b]$.
\begin{prop}\label{223}
Let $J=(q_1,q_2,c)$, where $q_1,q_2$ are quadrics and $c$ is a cubic. Then $\pd(R/J)\leq 4$.
\end{prop}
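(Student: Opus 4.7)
The plan is to split into cases based on $\h(J)$ and the structure of $K=(q_1,q_2)$. If $\h(J)=3$, then $J$ is a complete intersection of three forms and $\pd(R/J)=3$. If $\h(J)\le 1$, the three generators share a non-unit common factor $f$ of degree $1$ or $2$; factoring out $f$ (using $\pd(R/fI')=\pd(R/I')$ for a nonzerodivisor $f$) reduces $J$ to a $3$-generator ideal in fewer effective variables, and $\pd(R/J)\le 3$ follows via Remark~\ref{observ}(c) (or elementary casework on the residual ideal). Thus the main case is $\h(J)=2$.

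Assume $\h(J)=2$ and set $K=(q_1,q_2)$. If $q_1,q_2$ do not form a regular sequence, they share a common linear factor $\ell$, so $q_i=\ell\ell'_i$ and $K=\ell(\ell'_1,\ell'_2)$; a direct analysis of $J=\ell(\ell'_1,\ell'_2)+(c)$ via Lemma~\ref{ses}(1) gives $\pd(R/J)\le 4$. Otherwise $K$ is a height-two complete intersection with $\pd(R/K)=2$ and $e(R/K)=4$. The short exact sequence
\[
0 \to R/L(-3) \xrightarrow{\,\cdot c\,} R/K \to R/J \to 0,\qquad L:=K:c,
\]
yields $\pd(R/J)\le\max\{2,\pd(R/L)+1\}$, so it suffices to show $\pd(R/L)\le 3$. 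By Theorem~\ref{linkage}, $L$ is the link of $J^{un}$ by the CI $K$, hence unmixed of height $2$; by Theorem~\ref{linkage}(b), $e(R/L)=4-e(R/J^{un})\le 3$; by Theorem~\ref{linkage}(d), $R/L$ and $R/J^{un}$ share the same Hilbert function; and $q_1,q_2\in K\subseteq L$, so $L$ contains at least two quadrics.

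To establish $\pd(R/L)\le 3$, I would use the Associativity Formula (Proposition~\ref{Ass}) together with Proposition~\ref{non-deg} to enumerate the possible primary decompositions of the unmixed height-two ideal $L$ with $e(R/L)\le 3$. In the majority of sub-cases (single minimal prime of multiplicity at most $3$ by Del Pezzo--Bertini, or a linear-prime-primary ideal of length at most $3$, etc.) $L$ is Cohen-Macaulay and $\pd(R/L)=2$. In the remaining sub-cases, either $L$ contains a linear form --- in which case reducing to $R/(\ell)$ and applying Hilbert's Syzygy Theorem in the smaller polynomial ring gives $\pd(R/L)\le 3$ --- or $L$ is an intersection of two or three linear primes. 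In this final scenario the containment $K\subseteq L$ forces each minimal prime of $L$ to contain the two quadrics $q_1,q_2$, imposing strong linear dependencies among the linear forms defining these primes, so that iterating Lemma~\ref{ses}(2) yields $\pd(R/L)\le 3$.

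The main obstacle is the last sub-case: for three linear primes in generic position the intersection has $\pd=4$, which would only give $\pd(R/J)\le 5$. One must carefully use the containment $K\subseteq L$ of a height-two CI of two quadrics to constrain the span of the minimal primes and save one in the projective dimension, thereby obtaining the sharp bound $\pd(R/J)\le 4$.
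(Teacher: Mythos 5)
Your skeleton (casework on $\h(J)$, then linkage through the complete intersection $K=(q_1,q_2)$ via the exact sequence $0\to R/L\to R/K\to R/J\to 0$) is exactly the paper's, and the reduction to showing $\pd(R/L)\le 3$ is sound. The problem is that your execution of that last step has genuine gaps.

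First, your trichotomy for $L$ is wrong in one place and incomplete in another. You assert that ``a linear-prime-primary ideal of length at most $3$'' is Cohen--Macaulay; this is false. The ideal $(x,y)^2+(ax+by)$ with $\h(x,y,a,b)=4$ is $(x,y)$-primary of multiplicity $2$ and has $\pd=3$ (it is the basic non-CM example from \cite[Proposition~11]{En1}), and $(x,y)$-primary ideals of multiplicity $3$ can have $\pd=4$ (see the forms in \cite[Theorem~2.1]{MM}, e.g.\ case (iv)). So even before reaching the three-linear-primes sub-case, your enumeration lets through ideals $L$ with $\pd(R/L)=4$, and your argument would only yield $\pd(R/J)\le 5$. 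Second, for the sub-case you do flag --- $L$ an intersection of three linear primes, which generically has $\pd=4$ --- you state that the containment $(q_1,q_2)\subseteq L$ must ``impose strong linear dependencies'' but never carry out that analysis; as written this is a plan, not a proof. (The constraint is in fact strong enough --- e.g.\ three linear primes in general position intersect in an ideal generated in degree $3$, which cannot contain two quadrics --- but someone has to do the casework, and you have not.)

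The paper sidesteps all of this by splitting on $e(R/J)$ rather than on the primary decomposition of $L$, which keeps every case at multiplicity $\le 2$ on whichever side of the link it examines: if $e(R/J)\in\{1,4\}$ then $J^{un}$ itself is a complete intersection (a linear prime, resp.\ equal to $K$); if $e(R/J)=3$ then $e(R/L)=1$ and $L$ is a linear prime by Proposition~\ref{non-deg}(2); and if $e(R/J)=2$ then $e(R/L)=2$ and $\pd(R/L)\le 3$ follows from the structure theorem \cite[Proposition~11]{En1}. In particular the troublesome value $e(R/L)=3$ never has to be analyzed, because it corresponds to $e(R/J^{un})=1$, where one bounds $\pd$ through the Cohen--Macaulay ideal $J^{un}$ instead of through $L$. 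I recommend you restructure the final step this way; your current route can likely be completed, but only with the extra structural arguments you have omitted.
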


\begin{proof}
If $\h(J)=3$, then $\pd(R/J)=3$ because $J$ is a complete intersection. If $\h(J)=1$, then $J=\ell J_1$, where $J_1=(x,y,q)$ is a complete intersection of height 3. Then $\pd(R/J)=\pd(R/J_1)=3$. 

We may then assume $\h(J)=2$. If $\h(q_1,q_2)=1$, then we can write $J=(xy, xz, c)$ for linear forms $x,y,z$ with $c\notin (x)$. Then by Remark \ref{observ}(e) we have $\pd(R/(J:x))=\pd(R/(y,z,c))\leq 3$, and $\pd(R/(J+(x)))=\pd(R/(x,c))=2$; so Lemma \ref{ses}(1) yields $\pd(R/J)\leq 3$. 

If $\h(q_1,q_2)=2$, then $e(R/J)\leq 4$. If $e(R/J)=1,4$, then $J^{un}$ is a complete intersection. If $e(R/J)=3$, then by Theorem \ref{linkage}(b) the link $L'=(q_1,q_2):J^{un}$ of $J^{un}$ is an unmixed ideal with $e(R/L')=1$, so by Proposition~\ref{non-deg}(2), $L'$ is a complete intersection. In all these cases \cite[Theorem~7]{En1} implies $\pd(R/J)\leq 3$.
If $e(R/I)=2$, then by Theorem \ref{linkage}(b) one has $e(R/L')=2$, thus $\pd(R/L')\leq 3$ by \cite[Proposition~11]{En1}, and now \cite[Theorem~7]{En1} yields $\pd(R/J)\leq 4$.
\end{proof}

Next, we prove the desired bound when two of the three generators lie in a principal ideal.
\begin{cor}\label{x}
If we can write $I=(f_1',f_2',f_3')$ where $(f_1',f_2')\subseteq (g)$ for some non-unit $g\in R$, then $\pd(R/I)\leq 4$.
\end{cor}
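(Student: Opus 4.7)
The plan is to write $f_1' = g h_1$ and $f_2' = g h_2$ for some $h_1, h_2 \in R$ and apply Lemma~\ref{ses}(1) with $h = g$, reducing to bounding $\pd(R/(I:g))$ and $\pd(R/(I+(g)))$ separately. I first observe that $\deg g \in \{1, 2\}$: were $\deg g = 3$, then $h_1, h_2$ would be nonzero constants and $f_1', f_2'$ would both be scalar multiples of $g$, contradicting that $I$ has three linearly independent cubic generators.

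The bound on $\pd(R/(I+(g)))$ is routine: $I + (g) = (g, f_3')$ is generated by two forms, and after factoring out $d = \gcd(g, f_3')$ one writes it as a non-unit times a height-two complete intersection (or $R$). A short exact sequence then yields $\pd(R/(I+(g))) \leq 2$.

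For $\pd(R/(I:g))$, since $R$ is a domain I would compute $(gh_1, gh_2):g = (h_1, h_2)$ and $(f_3'):g = (f_3'/d)$, giving $I:g = (h_1, h_2, f_3'/d)$. When $\deg g = 2$, the ideal $I:g$ is generated by two linearly independent linear forms (independence follows from the linear independence of $f_1'$ and $f_2'$) together with a form of degree at most $3$, so $I:g$ is either contained in the linear prime $(h_1, h_2)$ or is a complete intersection of height $3$; either way $\pd(R/(I:g)) \leq 3$. When $\deg g = 1$, the ideal $I:g$ is generated by two quadrics $h_1, h_2$ plus $f_3'/d$, which is either a cubic (when $\gcd(g, f_3') = 1$) or a quadric (when $g \mid f_3'$). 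In the mixed subcase Proposition~\ref{223} directly yields $\pd(R/(I:g)) \leq 4$.

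The main obstacle is the three-quadrics subcase ($\deg g = 1$ with $g \mid f_3'$). There I would invoke the HMMS1 bound $\pd(R/J) \leq 2n - 2 = 4$ for $J$ generated by $n = 3$ quadrics of height two (cited in the introduction), and handle the degenerate heights $1$ or $3$ by a direct short exact sequence (factoring out a common linear factor) or the complete intersection case, respectively. Assembling the pieces, $\pd(R/I) \leq \max\{4, 2\} = 4$.
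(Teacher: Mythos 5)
Your proof is correct and follows essentially the same route as the paper: split via Lemma~\ref{ses}(1) on $g$, note $\pd(R/(I+(g)))=\pd(R/(g,f_3'))\leq 2$, and compute $I:g$ explicitly according to $\deg g\in\{1,2\}$, invoking Proposition~\ref{223} when $\deg g=1$. The only difference is that the paper observes up front that $\h(g,f_3')=2$ (forced by $I\subseteq (g,f_3')$ and $\h(I)=2$), which gives $\gcd(g,f_3')=1$ and renders your extra three-quadrics subcase (and the $d\neq 1$ bookkeeping) vacuous.
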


\begin{proof}
Observe that $\pd(R/(I+(g)))=\pd(R/(g,f_3'))=2$, so by Lemma \ref{ses}(1) it suffices to prove $\pd(R/I:g)\leq 4$. 
By Remark \ref{observ}(a), $\h(g,f_3')=2$, and $1\leq \deg(g)\leq 2$. If $g\in R_1$, then $(g)$ is a prime ideal.  Write $f_i'=gq_i$, where $q_i$ is a quadric for $i=1,2$. Then $I:g=(q_1,q_2)+\left((f_3'):g\right)=(q_1,q_2,f_3')$. By Proposition \ref{223}, $\pd(R/I:g)\leq 4$.
If $g\in R_2$, write $f_1'=xg$ and $f_2'=yg$ for linearly independent forms $x,y\in R_1$. Then $I:g=(x,y)+\left((f_3'):g\right)=(x,y,f_3')$, so one has $\pd(R/I:g)\leq 3$. 
\end{proof}

The first part of the following corollary was first proved by Engheta \cite{En2} and later generalized in \cite{HMMS2}. The second part of the statement was proved in \cite{HMMS2}.
\begin{cor}[{cf. \cite[Corollaries 2.3 and 2.9]{HMMS2}}]\label{e=7}
Let $I$ be an almost complete intersection ideal generated by 3 cubic forms. Then $e(R/I)\leq 7$.  Moreover, if $e(R/I)=7$, then $I$ is Cohen-Macaulay and hence $\pd(R/I)=2$.
\end{cor}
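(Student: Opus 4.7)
The plan is to exploit linkage through the complete intersection $C=(f_1,f_2)$. By Notation \ref{not}, $f_1,f_2$ is a regular sequence of cubics, so $e(R/C)=9$, and $L=C:f_3=C:I^{un}$ (Remark \ref{observ}(d)) is an unmixed height-two ideal. Theorem \ref{linkage}(b) then gives $e(R/I)=e(R/I^{un})=9-e(R/L)$, and since $e(R/L)\geq 1$ (as $L$ is proper), one immediately obtains $e(R/I)\leq 8$.

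The key step for the first claim is to rule out $e(R/L)=1$. In that case Proposition \ref{non-deg}(2) forces $L=(x,y)$ to be a linear prime, and the identity $L=C:f_3$ gives $xf_3,yf_3\in C$. Writing $xf_3=\alpha_1f_1+\alpha_2f_2$ and $yf_3=\beta_1f_1+\beta_2f_2$ with $\alpha_i,\beta_i\in R_1$, the tautology $y(xf_3)-x(yf_3)=0$ produces the syzygy
$$(\alpha_1y-\beta_1x)f_1+(\alpha_2y-\beta_2x)f_2=0,$$
whose coefficients lie in $R_2$. Since the first syzygies of the regular sequence $f_1,f_2$ are generated by the Koszul relation, whose coefficients have degree $3$, each coefficient above must vanish. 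Combined with the fact that $x,y$ is itself a regular sequence of linear forms, $\alpha_iy=\beta_ix$ forces $\alpha_i=c_ix$ and $\beta_i=c_iy$ for scalars $c_i\in k$; cancelling $x$ in $xf_3=x(c_1f_1+c_2f_2)$ then yields $f_3\in C$, contradicting the almost-complete-intersection hypothesis.

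For the moreover statement, assume $e(R/I)=7$, so $e(R/L)=2$. The associativity formula together with Proposition \ref{non-deg}(2) and the classification of length-two $\mathfrak{p}$-primary ideals in a two-dimensional regular local ring shows that $L$ falls into one of four types: (i) a complete intersection $(\ell,q)$ with $\ell$ linear and $q$ quadric; (ii) $(x,y^2)$ after a coordinate change; (iii) $(x,y_1y_2)$, when $L=\mathfrak{p}_1\cap \mathfrak{p}_2$ and $\mathfrak{p}_1,\mathfrak{p}_2$ share a common linear form; or (iv) $\mathfrak{p}_1\mathfrak{p}_2$, when $\mathfrak{p}_1,\mathfrak{p}_2$ are supported on disjoint sets of linear forms. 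Cases (i)-(iii) are complete intersections and hence Cohen-Macaulay, so Theorem \ref{linkage}(a) produces $I^{un}$ Cohen-Macaulay; a mapping-cone analysis of the short exact sequence $0\to R/L\xrightarrow{\cdot f_3}R/C\to R/I\to 0$, combined with the degree gap between $L$ (generated in degrees at most $2$) and $f_3$ (degree $3$), then forces the cone to collapse to a length-two resolution of $R/I$, yielding $I=I^{un}$ Cohen-Macaulay with $\pd(R/I)=2$.

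The main obstacle is case (iv), where $L=\mathfrak{p}_1\mathfrak{p}_2$ is not Cohen-Macaulay and linkage alone does not suffice. I would rule it out directly: using that every cubic in $\mathfrak{p}_1\mathfrak{p}_2$ admits a bilinear presentation forced by the four quadric generators together with the relation $\mathfrak{p}_1\mathfrak{p}_2\cdot f_3\subseteq C$, one derives a Koszul-type obstruction analogous to the $e(R/I)=8$ argument, showing that no $f_3$ can produce an almost-complete-intersection $I$ with this prescribed link.
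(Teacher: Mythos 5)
Your argument for the bound $e(R/I)\le 7$ is correct and complete: it is in essence Engheta's original proof from \cite{En2} (note that the paper itself gives no proof of this corollary at all, deferring entirely to \cite[Corollaries~2.3 and 2.9]{HMMS2}), and the degree count showing that a syzygy of the regular sequence $f_1,f_2$ with quadratic coefficients must vanish is exactly the right mechanism for excluding $e(R/L)=1$. The problems are all in the ``moreover'' part, which is where the actual content of \cite[Corollary~2.9]{HMMS2} lives. First, your classification of the links with $e(R/L)=2$ is incomplete: by \cite[Proposition~11]{En1}, used throughout this paper, an unmixed $(x,y)$-primary ideal of multiplicity two is either $(x,y^2)$ \emph{or} $(x,y)^2+(ax+by)$ with $\h(x,y,a,b)=4$. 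The local classification you invoke only gives $L_\p=(\ell)+\p^2R_\p$ in the two-dimensional regular local ring $R_\p$; globalizing produces this second family, which is not a complete intersection and has $\pd\bigl(R/((x,y)^2+(ax+by))\bigr)=3$ (see the proof of Proposition~\ref{L11;12}). It is therefore exactly as problematic as your case (iv) and cannot be absorbed into case (ii).

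Second, case (iv) --- and the missing case just described --- are precisely the hard part, and you do not prove them; ``one derives a Koszul-type obstruction analogous to the $e(R/I)=8$ argument'' is a plan, not an argument, and the analogy in fact breaks down for degree reasons. With $L=\p_1\p_2=(xu,xv,yu,yv)$ one gets $q_if_3=\alpha_{i1}f_1+\alpha_{i2}f_2$ with the $\alpha_{ij}$ \emph{quadrics}, and the linear syzygies among the four $q_i$ then produce syzygies of $f_1,f_2$ with \emph{cubic} coefficients; these are scalar multiples of the Koszul relation rather than forced to vanish, so no contradiction falls out of degree counting alone. Finally, even in the genuinely Cohen--Macaulay cases your conclusion $\pd(R/I)=2$ is under-justified: Theorem~\ref{pd+1}(i) only gives $\pd(R/I)\le\pd(R/L)+1=3$, and the mapping cone over $R/L\xrightarrow{\cdot f_3}R/C$ has length three. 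Making it collapse amounts to showing that the last comparison map is a unit, equivalently that $I=I^{un}$; this does hold, because $I^{un}/C\cong\hom(R/L,R/C)$ is cyclic and generated in degree three when $L$ is perfect with graded resolution $0\to R(-3)\to R(-1)\oplus R(-2)\to R$, so that $[I^{un}]_3=[C]_3+kg$ forces $I=C+(f_3)=C+(g)=I^{un}$ --- but that is an argument you must actually make; it is not a formal consequence of a ``degree gap'' between the generators of $L$ and $f_3$.
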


It is thus natural to divide the proof of the Main Theorem by the multiplicity of $R/I$. By the above, we only need to consider the cases where $1\leq e(R/I)\leq 6.$

\section{The cases $1\leq e(R/I)\leq 3$}

The cases of multiplicity one and two were proved by Engheta \cite{En1}.  We include a proof for completeness. 
\begin{prop}\label{e=1,2}
If $e(R/I)=1$, then $\pd(R/I)\leq 3$. If $e(R/I)=2$, then $\pd(R/I)\leq 4$.
\end{prop}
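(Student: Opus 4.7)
The plan is to do a case analysis by multiplicity and, within each, by the type of $I^{un}$ given by the Associativity Formula (Proposition~\ref{Ass}); then route each case to the appropriate clause of Theorem~\ref{pd+1}. Since $I^{un}$ is unmixed of height two, its type is $\langle 1;1\rangle$ when $e(R/I)=1$, and one of $\langle 2;1\rangle$, $\langle 1;2\rangle$, or $\langle 1,1;1,1\rangle$ when $e(R/I)=2$.

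For types $\langle 1;1\rangle$ and $\langle 2;1\rangle$, $I^{un}$ is itself a prime $\p$ of height two with $e(R/\p)\leq 2$; Proposition~\ref{non-deg}(2) identifies $\p$ with either a linear prime or a complete intersection $(\ell,q)$ of a linear form and an irreducible quadric. In either case $I^{un}$ is Cohen-Macaulay, so Theorem~\ref{pd+1}(i) gives $\pd(R/I)\leq 3$. This handles $e(R/I)=1$ entirely and the first subcase of $e(R/I)=2$.

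For the remaining two subcases of $e(R/I)=2$, the strategy is to exhibit a quadric inside $I^{un}$ and then invoke Theorem~\ref{pd+1}(iii) to conclude $\pd(R/I)\leq 4$. In type $\langle 1,1;1,1\rangle$, write $I^{un}=\p_1\cap\p_2$ with both $\p_i$ linear primes; then $\p_1\p_2\subseteq I^{un}$ is generated by products of pairs of linear forms, and these are honest quadrics belonging to $I^{un}$. In type $\langle 1;2\rangle$, $I^{un}$ is $\p$-primary for a linear prime $\p$ and $(I^{un})_\p$ has colength two in the two-dimensional regular local ring $R_\p$; the key local observation is that any $\m$-primary ideal $J$ of colength two in a two-dimensional regular local ring $(A,\m)$ must satisfy $\m^2\subseteq J$, since the only intermediate step in the composition series $J\subsetneq \m\subsetneq A$ forces $\m/J$ to be annihilated by $\m$. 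Because $\p$ is a linear (hence complete-intersection) prime, $\p^2$ is itself $\p$-primary globally, so the local inclusion $(\p^2)_\p\subseteq (I^{un})_\p$ upgrades to $\p^2\subseteq I^{un}$; and $\p^2$ is generated by quadrics.

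The only substantive step is the local colength-two analysis in type $\langle 1;2\rangle$; the rest is bookkeeping matching each primary decomposition type to the appropriate part of Theorem~\ref{pd+1}.
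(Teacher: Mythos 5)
Your proof is correct and follows essentially the same route as the paper: identify the type of $I^{un}$ via the Associativity Formula, use Proposition~\ref{non-deg}(2) plus Theorem~\ref{pd+1}(i) in the prime cases, and exhibit a quadric in $I^{un}$ plus Theorem~\ref{pd+1}(iii) in the remaining two cases. The only difference is that where the paper cites Engheta's \cite[Prop.~8]{En1} for the existence of that quadric, you supply a direct (and correct) argument --- the colength-two computation forcing $\p^2\subseteq I^{un}$ in type $\langle 1;2\rangle$, and $\p_1\p_2\subseteq I^{un}$ in type $\langle 1,1;1,1\rangle$ --- which makes the proof self-contained.
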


\begin{proof}

If $e(R/I)=1$, then $e(R/I^{un})=1$, so by Proposition \ref{non-deg}(2) $I^{un}$ is Cohen-Macaulay; thus Theorem \ref{pd+1}(i) yields $\pd(R/I)\leq 3$. When $e(R/I)=2$, by the Associativity Formula \ref{Ass}, either $I^{un}$ is prime, or is primary to a linear prime, or is the intersection of two linear primes. If $I^{un}$ is prime, by Proposition~\ref{non-deg}(2) it is a complete intersection; then, by Theorem~\ref{pd+1}(i) one has $\pd(R/I)\leq 3$. If instead $I^{un}$ is primary to a linear prime or $I^{un}$ is the intersection of two linear primes, then $I^{un}$ contains a quadric by \cite[Prop. 8]{En1}, hence $\pd(R/I)\leq 4$ by Theorem~\ref{pd+1}(iii).
\end{proof}

The case of multiplicity three is more involved.
\begin{rmk}\label{shapee=3}
By the Associativity Formula \ref{Ass}, an unmixed ideal $H$ of height two and multiplicity three has one of the following five forms:
\begin{enumerate}
\item $H$ is a prime ideal (corresponding to the type $\langle 3;1\rangle$);
\item $H$ is primary to a linear prime (i.e. type $\langle 1;3\rangle$);
\item $H$ is the intersection of two prime ideals (i.e. type $\langle 1,2;1,1\rangle$);
\item $H$ is the intersection of three linear primes (i.e. type $\langle 1,1,1;1,1,1\rangle$);
\item $H$ is the intersection of a linear prime with a multiplicity two ideal primary to a linear prime (i.e. type $\langle 1,1;1,2\rangle$).
\end{enumerate}
\end{rmk}

\begin{prop}\label{e=3}
If $e(R/I)=3$, then $\pd(R/I)\leq 4$.
\end{prop}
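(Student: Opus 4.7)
The plan is to reduce to the five shapes of $I^{un}$ enumerated in Remark~\ref{shapee=3} and handle each separately, taking advantage of the tools developed so far (linkage, presence of a quadric, structure theorems for primary components with small multiplicity). In every case the height of $I^{un}$ is two and $e(R/I^{un})=3$, so the classification is exhaustive.

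\medskip

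\textbf{Easy cases.} If $I^{un}$ has type $\langle 3;1\rangle$, then $I^{un}=\p$ is a prime of height two and multiplicity three. The Del~Pezzo–Bertini theorem (Proposition~\ref{non-deg}(3)) guarantees $R/\p$ is Cohen–Macaulay, and Theorem~\ref{pd+1}(i) then gives $\pd(R/I)\leq 3$. If $I^{un}$ has type $\langle 1,2;1,1\rangle$, write $I^{un}=\p_1\cap\p_2$ with $e(R/\p_2)=2$, $\h(\p_2)=2$. By Proposition~\ref{non-deg}(2), $\p_2=(\ell,q)$ is a complete intersection of a linear form and a quadric, and the quadric $q$ belongs to $I^{un}$; Theorem~\ref{pd+1}(iii) then yields $\pd(R/I)\leq 4$.

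\medskip

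\textbf{Cases requiring additional work.} The remaining three types genuinely need the structural/linkage machinery.

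For the type $\langle 1;3\rangle$, so $I^{un}$ is primary to a linear prime $(x,y)$ with colength~3, the plan is to invoke the structure theorem \cite[Prop.~5.2]{MM}, which classifies such primary ideals and shows that either $I^{un}$ is extended from $k[x,y,\text{few linear forms}]$ (so one concludes via Proposition~\ref{<=t}/Remark~\ref{observ}(c)), or $I^{un}$ is linked via a complete intersection of matching degrees to a Cohen–Macaulay ideal, giving $\pd(R/I)\leq 4$ through Theorem~\ref{pd+1}(i).

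For the type $\langle 1,1;1,2\rangle$, write $I^{un}=q_1\cap\p$ where $\p=(x,y)$ is a linear prime and $q_1$ is primary to a linear prime $\q=(u,v)$ with colength~2. By the $e=2$ analysis of \cite[Prop.~8]{En1}, $q_1\supseteq \q^2$, so $q_1$ contains a quadric. The obstacle is that this quadric may not lie in $I^{un}$. The approach is to use Lemma~\ref{ses}(2) with the decomposition $I^{un}=q_1\cap\p$: one bounds $\pd(R/q_1)$ and $\pd(R/\p)$ directly (both are essentially extended from few variables, and $\p$ is a complete intersection), and controls $\pd(R/(q_1+\p))$ by noting $q_1+\p$ lives in the six linear forms $x,y,u,v,\ldots$ and is $\m$-primary modulo the quotient by a system of parameters; a clean bound then comes from Proposition~\ref{<=t}. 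This is exactly what is packaged as Proposition~\ref{L11;12}.

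For the type $\langle 1,1,1;1,1,1\rangle$, $I^{un}=\p_1\cap\p_2\cap\p_3$ is the intersection of three height-two linear primes. Here I would link $I^{un}$ to an ideal $L=(f_1,f_2):I^{un}$ of the same multiplicity three; by Theorem~\ref{linkage}(b),(d), $L$ has the same generator degree profile, and the generators must live in the small ambient linear space spanned by the defining linear forms of the three $\p_i$'s, so Proposition~\ref{<=t} applies. Alternatively, a direct Mayer–Vietoris/Lemma~\ref{ses}(2) argument on $(\p_1\cap\p_2)\cap\p_3$ works: $\p_1\cap\p_2$ contains a quadric by the type $\langle 1,1;1,1\rangle$ analysis, and the sum $(\p_1\cap\p_2)+\p_3$ is supported on only a few linear forms. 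This is Proposition~\ref{111;111}.

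\medskip

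\textbf{Main obstacle.} The genuinely delicate case is $\langle 1;3\rangle$: primary ideals to a linear prime with small colength need not contain a quadric and their Betti numbers are sensitive to the embedding of the primary component. That is precisely why \cite{MM} is required, and it is the reason this multiplicity~3 case is harder than multiplicities $1$ and $2$.
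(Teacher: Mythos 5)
Your case division and your treatment of the types $\langle 3;1\rangle$, $\langle 1;3\rangle$, and $\langle 1,2;1,1\rangle$ essentially match the paper, but the sketches for the two hard types contain genuine gaps. The most serious one is structural: in both the $\langle 1,1;1,2\rangle$ and $\langle 1,1,1;1,1,1\rangle$ cases you propose to bound $\pd(R/I^{un})$ via Lemma~\ref{ses}(2), but there is no inequality of the form $\pd(R/I)\leq \pd(R/I^{un})+c$ available in this paper. The only bridges back to $I$ are Theorem~\ref{pd+1}(i), which requires a bound on $\pd(R/L)$ for an ideal $L$ \emph{linked} to $I^{un}$ (or Cohen--Macaulayness of $I^{un}$, which forces the link to be Cohen--Macaulay as well), the presence of a linear form or quadric in $I^{un}$ or in $L$, or the observation that the cubics of $I^{un}$ --- hence the generators of $I$ --- can be written in few variables (Remark~\ref{observ}(c), Proposition~\ref{<=t}). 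A bound such as $\pd(R/I^{un})\leq 4$ obtained from a Mayer--Vietoris argument does not, by itself, say anything about $\pd(R/I)$.

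Second, your ``few linear forms'' claims either fail or do not give the required bound of $4$. In type $\langle 1,1;1,2\rangle$ the structure theorem gives $q_1=(x,y)^2+(ax+by)$ where $a,b$ are forms of \emph{arbitrary} degree with $\h(x,y,a,b)=4$, so $q_1+\p$ need not be supported on six linear forms; and in type $\langle 1,1,1;1,1,1\rangle$ the three linear primes can span up to six independent linear forms, for which Proposition~\ref{<=t} only yields $\pd(R/I)\leq \max\{3,6\}=6$. The paper's Propositions~\ref{111;111} and~\ref{L11;12} instead handle the subcases where the relevant heights are $5$ or $6$ by exhibiting explicit links $L'$ of $I^{un}$ by complete intersections of cubics and verifying $\pd(R/L')=3$ directly, reserving Proposition~\ref{<=t} for the subcases where the height drops to at most $4$; your sketch does not replace these computations. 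Finally, a small fixable slip in the type $\langle 1,2;1,1\rangle$ case: the quadric $q$ of $\p_2=(\ell,q)$ need not lie in $I^{un}=\p_1\cap\p_2$, since nothing puts $q$ in $\p_1$; the quadric one actually finds in $I^{un}$ is the product $\ell u$ of $\ell$ with a linear generator $u$ of $\p_1$.
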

\begin{proof}
By assumption $I^{un}$ has one of the five forms listed in Remark \ref{shapee=3}. In case (1), $I^{un}$ is Cohen-Macaulay by Proposition \ref{non-deg}(3), therefore Theorem~\ref{pd+1}(i) yields $\pd(R/I)\leq 3$. 
Case (2) is proved in \cite[Proposition~5.2]{MM}. Engheta proved Case (3) in \cite[Proposition~14, case 3]{En1} by showing that $I^{un}$ contains a quadric. 
Case (4) was proved by B. Engheta in \cite[Proposition~f, case 5]{En1}, however a few cases were missing in his analysis (e.g the cases where $I^{un}=(x,y)\cap (z,u)\cap (x+z,y+\alpha u)$ where $\alpha=0$ or $1$), thus we give a complete proof in Proposition \ref{111;111} below. Then we are left with case (5), i.e. $I^{un}=\p\cap K$, where $\p=(u,v)$ is a linear prime and $K$ is an ideal of multiplicity $2$ primary  to a linear prime $(x,y)$. If $K$ contains a linear form, then since $\p K \subseteq I^{un}$, it follows that $I^{un}$ contains a quadric; so $\pd(R/I)\leq 4$ by Theorem~\ref{pd+1}(iii). If $K$ does not contain a linear form, by \cite[Proposition~11]{En1} one has $K=(x,y)^2 + (ax+by)$ for homogeneous forms $a,b$ with ${\rm ht}(a,b,x,y)=4$. This case is proved in Proposition \ref{L11;12} below.
\end{proof}

First, we prove $\pd(R/I)\leq 4$ if either $I^{un}$ or $L=(f_1,f_2):f_3$ is the intersection of three linear primes. In particular, this proves case (4). 

\begin{prop}\label{111;111}
Let $K=\p_1\cap \p_2\cap \p_3$, where $\p_1,\p_2,\p_3$ are distinct linear primes of height $2$.
If either $K=I^{un}$ or $K=L$, then $\pd(R/I)\leq 4$. 
\end{prop}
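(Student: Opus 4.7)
The plan is to reduce the analysis of $\pd(R/I)$ to a polynomial subring capturing the linear forms appearing in the $\p_i$'s. Let $V = \sum_{i=1}^3 (\p_i\cap R_1) \subseteq R_1$ and $t=\dim_k V$; since the $\p_i$ are three distinct $2$-dimensional subspaces of $R_1$, one has $3 \leq t \leq 6$. The ideal $K$ lies in the polynomial subring $A = k[V]$. Whether $K = I^{un}$ or $K = L$, Proposition~\ref{<=t} applies with $h_1,\ldots,h_t$ a basis of $V$ and yields $\pd(R/I)\leq \max\{3,t\}$. Hence if $t \leq 4$ we conclude $\pd(R/I)\leq 4$ directly; this already handles Engheta's missing configurations $I^{un}=(x,y)\cap(z,u)\cap(x+z,y+\alpha u)$, which have $t=4$.

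For $t\in\{5,6\}$, my plan is to prove the stronger fact that $A/K$ is Cohen--Macaulay, so that $\pd(R/K)=2$. Combined with Theorems~\ref{linkage}(c) and~\ref{pd+1}(i), and noting $\pd(R/I^{un})=\pd(R/L)=\pd(R/K)$ (whichever of $I^{un}$, $L$ is the hypothesized $K$), one obtains
\[
\pd(R/I)\leq \pd(R/I^{un})+1 = \pd(R/K)+1 = 3\leq 4.
\]

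To prove $A/K$ is Cohen--Macaulay I would proceed by iterated reduction modulo generic linear forms. Because $A/K$ is reduced of pure dimension $t-2$ and $k$ is infinite, prime avoidance supplies a linear form $\ell_1\in A_1\setminus(\p_1\cup\p_2\cup\p_3)$ that is a non-zerodivisor on $A/K$. Reducing modulo $\ell_1$ replaces each $\p_i$ with its image in $A/(\ell_1)$ --- still a height-$2$ linear prime, possibly coinciding with a neighbor --- and the image of $K$ is the intersection of these images, hence again a radical ideal cut out by at most three linear primes in the $(t-1)$-variable ring $A/(\ell_1)$. Iterating $t-3$ times yields a regular sequence on $A/K$ whose final quotient is a $1$-dimensional reduced graded ring, automatically Cohen--Macaulay; consequently $\depth(A/K)\geq t-2 = \dim(A/K)$. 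The main obstacle is checking that prime avoidance continues to succeed at each reduction step, equivalently that no intermediate $\bar{\p_i}$ equals the whole maximal ideal of the reduced ring; this is automatic for $t\geq 4$ since $\p_i+(\ell_1)$ has height at most $3 < t$, so the induction runs without degeneration.
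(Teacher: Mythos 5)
Your reduction to the subring $A=k[V]$ and the case $t\leq 4$ is fine and coincides with the paper's middle case (Proposition~\ref{<=t} applied to $I^{un}$ or to the link $L$). The proof collapses, however, at the cases $t=5,6$: the claim that $A/K$ is Cohen--Macaulay there is false. Already for $t=6$, $K=(x,y)\cap(z,w)\cap(u,v)$ with six independent linear forms is the ideal of three codimension-two planes meeting only at the origin; the punctured spectrum of $A/K$ is disconnected, so by Hartshorne's connectedness theorem $\depth(A/K)\leq 1$ and $\pd_A(A/K)=5$, not $2$. You can also see the failure from inside the paper: the link $L'=(xzv,ywu):K$ computed in the paper's proof has $\pd(R/L')=3$, so $L'$ is not Cohen--Macaulay, and by Theorem~\ref{linkage}(a) neither is $K$. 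The same objection applies to the $t=5$ configuration $(x,y)\cap(z,w)\cap(u,x+z)$.

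The step of your induction that actually breaks is the assertion that reducing modulo a general linear form $\ell_1$ sends $K$ to the intersection of the images of the $\p_i$. One only has $(\p_1\cap\p_2\cap\p_3)+(\ell_1)\subseteq\bigcap_i\bigl(\p_i+(\ell_1)\bigr)$, and the inclusion is typically strict: for instance $\bigl((x,y)\cap(u,v)\bigr)+(x-u)$ maps to $(x^2,xv,xy,yv)$ in $k[x,y,v]$, which has an embedded component at $(x,y,v)$ and is strictly smaller than $(x,yv)=(x,y)\cap(x,v)$. So the intermediate quotients are neither reduced nor unmixed, and the depth count does not propagate; indeed it cannot, since the conclusion is false. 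To repair the argument for $t=5,6$ you would need to do what the paper does: either locate a quadric in $K$ (possible in one $t=5$ subcase, giving $\pd(R/I)\leq 4$ by Theorem~\ref{pd+1}(iii)), or link $K$ by a complete intersection of two cubics, check directly that the resulting ideal has projective dimension $3$ (giving $\pd(R/I)\leq 4$ by Theorem~\ref{pd+1}(i)), and rule out $K=L$ in those configurations by the cubic count of Theorem~\ref{linkage}(d). Note also that your intermediate identity $\pd(R/I^{un})=\pd(R/L)$ is not justified by a single link; it is harmless here only because Theorem~\ref{pd+1}(i) covers the Cohen--Macaulay case for either ideal directly.
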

\begin{proof}
Write $\p_1=(x,y)$, $\p_2=(z,w)$ and $\p_3=(u,v)$. If ${\rm ht}(\p_1+\p_2+\p_3)=6$, then $K=\p_1\p_2\p_3=(x,y)(z,w)(u,v)$ and $C=(xzv,ywu)$ is a complete intersection in $K$. 
Note that $L'=C:K=C+(xyzw,xyuv,zwuv)$, thus we cannot have $K=L$, since otherwise, by Theorem \ref{linkage}(d), $I^{un}$ contains only two linearly independent cubics, contradicting Remark \ref{observ}(a). If $K=I^{un}$, it is easily checked that the above squarefree monomial ideal $L'\sim I^{un}$ has $\pd(R/L')=3$, so $\pd(R/I)\leq 4$ by Theorem \ref{pd+1}(i). 

If ${\rm ht}(\p_1+\p_2+\p_3)\leq 4$ then $K$ is extended from a polynomial ring in at most 4 variables, thus Proposition \ref{<=t} yields $\pd(R/I)\leq 4$.

We may then assume ${\rm ht}(\p_1+\p_2+\p_3)=5$ and, without loss of generality, $v\in (x,y,z,w,u)$. If $v\in (x,y,u)$ then after possibly a change of coordinate we may assume $v\in (x,y)$, thus $vz\in K$ so the statement follows by Theorem \ref{pd+1}(iii). Analogously if $v\in (z,w,u)$. 

Assume then $v\notin (x,y,u)$ and $v\notin (z,w,u)$, then $v=x'+z'$ for $0\neq x'\in (x,y)$ and $0\neq z'\in (z,w)$. After possibly a change of coordinate, we may assume $v=x+z$, thus $K=(x,y)\cap (z,w)\cap (u,x+z)$. Similarly to the above, since $\h(x,y,z,w,u)=5$, it is easily checked that $L''=(xz(x+z), ywu):K$ has $\pd(R/L'')=3$ and contains only two linearly independent cubics. (another way to see this is by noticing that $v-(x+z)$ is regular on $R/L'$, where $L'$ is as above, and that $L''=L'R''$, where $R''=R/(v-x-z)\cong k[x,y,z,w,u]$.) 
Then, by Theorem \ref{pd+1}(i), one has $\pd(R/I)\leq 4$. 
\end{proof}

We now prove the remaining case of Proposition~\ref{e=3}.
\begin{prop}\label{L11;12}
If $I^{un}=\big[(x,y)^2 +(ax+by)\big]\cap (u,v)$ for linear forms $u,v,x,y$, and forms $a,b$ such that ${\rm ht}(a,b,x,y)=4$, then ${\rm pd}(R/I)\leq 4$.
\end{prop}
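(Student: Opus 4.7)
The natural tool here is Lemma~\ref{cubic}(ii), which is tailored to ideals of precisely the form $K = \p^2 + (ax+by)$ with $\p = (x,y)$. My plan is to apply it first to dichotomize the behavior of $I^{un} = K \cap (u,v)$, and then combine with Theorem~\ref{pd+1}, Proposition~\ref{<=t}, and Lemma~\ref{ses}.

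\textbf{Step 1: Apply Lemma~\ref{cubic}(ii) with $\p = (x,y)$ and $H = (u,v)$.} There are two mutually exclusive sub-cases depending on whether $ax+by \in (x,y)^2 + (u,v)$.

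\emph{Case A: $ax+by \in (x,y)^2 + (u,v)$.} Lemma~\ref{cubic}(ii) gives
\[
I^{un} \;=\; \bigl(a'x + b'y\bigr) \;+\; \bigl((x,y)^2 \cap (u,v)\bigr),
\]
with $\deg(a') = \deg(a)$ and $(x,y,a',b') = (x,y,a,b)$. If $\deg(a) = 1$, then $a'x+b'y$ is a quadric in $I^{un}$ and Theorem~\ref{pd+1}(iii) immediately yields $\pd(R/I) \le 4$. If $\deg(a) \ge 2$, then using that $(x,y)^2 \cap (u,v)$ is generated (in the relevant degrees) by products of the form $\ell \cdot m$ with $\ell \in (x,y)^2_2$ and $m \in \{u,v\}$, one checks $I^{un}$ is extended from a polynomial subring in at most $5$ variables, and Proposition~\ref{<=t} concludes.

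\emph{Case B: $ax+by \notin (x,y)^2 + (u,v)$.} Here the cubics of $I^{un}$ are forced to lie in $(x,y)^2(u,v)$ together with multiples of $ax+by$, and we cannot produce a low-degree form directly. Instead, apply Lemma~\ref{ses}(1) with $h = u$ to obtain
\[
\pd(R/I) \;\le\; \max\bigl\{\,\pd(R/(I:u)),\; \pd(R/(I+(u)))\,\bigr\}.
\]

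\textbf{Step 2: Bound each piece.} Since $I \subseteq (u,v)$, write $f_i = u\alpha_i + v\beta_i$ with $\alpha_i,\beta_i \in R_2$. Then $I+(u) = (u,\, v\beta_1, v\beta_2, v\beta_3)$. A second application of Lemma~\ref{ses}(1) with $h=v$, together with Remark~\ref{observ}(e), reduces us to bounding $\pd(R/(u,v,\beta_1,\beta_2,\beta_3))$, which is an ideal of three quadrics modulo $(u,v)$. The condition $f_i \in K$ combined with regularity of $v$ on $R/(K+(u))$ forces the $\beta_i$ to lie in $K+(u)$, so $(\bar\beta_1,\bar\beta_2,\bar\beta_3)$ is a 3-quadric subideal of the mod-$u$ image of $K$, which has multiplicity at most $2$; the results of Section~3 applied to $R/(u)$ then bound its projective dimension. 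A symmetric analysis of $I:u$, exploiting that $\alpha_i$ satisfies the dual constraint in $K+(v)$, completes the bound.

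\textbf{Main obstacle.} The most delicate point is the analysis in Case B. The quadrics $\alpha_i,\beta_i$ are not canonical (they are determined only modulo $u,v$), so one must use the structural constraint $f_i \in K$ to force the auxiliary quadric ideals into positions that the earlier results in Section~3 can handle with pd at most $3$. This structural step is what ultimately distinguishes the bound $\pd(R/I) \le 4$ from the a priori bound $\pd(R/I) \le 5$ one would obtain by passing through $\pd(R/I^{un})$ and Theorem~\ref{pd+1}(i) via linkage; the latter route is too lossy here because $R/I^{un}$ is not Cohen--Macaulay in this case.
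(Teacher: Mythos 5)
Your use of Lemma~\ref{cubic}(ii) to dichotomize on whether $ax+by\in(x,y)^2+(u,v)$, and the disposal of Case A when $\deg(a)=1$ via the quadric $a'x+b'y$ and Theorem~\ref{pd+1}(iii), are fine. But there are two genuine gaps. First, in Case A with $\deg(a)\ge 2$ you assert that $I^{un}=(a'x+b'y)+(x,y)^2(u,v)$ is extended from a polynomial subring in at most $5$ variables; this is unjustified and appears false. The constraint $a'x+b'y\in(u,v)$ only says $a'x+b'y=u\lambda+v\mu$ for quadrics $\lambda,\mu$ that may involve arbitrarily many new variables: for instance $a=us$, $b=vt$ satisfies $\h(x,y,a,b)=4$ and produces the cubic $usx+vty$, which cannot be written in terms of $5$ forms, so Proposition~\ref{<=t} does not apply. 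This is precisely the subcase where the paper has to work hardest: it forms the link $L'=(x^2u,y^2v):I^{un}$, identifies $L'$ explicitly as $(x^2,v)\cap(y^2,u)\cap\big[(x,y)^2+(ax-by)\big]$ by comparing multiplicities, shows $\pd(R/L')\le 3$ by coloning with $uv$, and concludes $\pd(R/I)\le 4$ from Theorem~\ref{pd+1}(i).

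Second, Case B does not close. Your reduction of $I+(u)$ leads, via Remark~\ref{observ}(e), to the ideal $(u,\beta_1,\beta_2,\beta_3)$, i.e.\ a linear form plus three quadrics; three quadrics can have projective dimension $4$ (e.g.\ $(x^2,y^2,xa+yb)$), so a priori this term is bounded only by $5$, which is not good enough for the claimed bound of $4$. Neither the assertion that the $\beta_i$ land in a multiplicity-at-most-$2$ ideal nor the appeal to ``Section~3 applied to $R/(u)$'' (Section~3 concerns three cubics, not three quadrics) is substantiated, and the promised ``symmetric'' analysis of $I:u$ is not symmetric at all, since $(u\alpha_1+v\beta_1,\dots):u$ admits no comparable presentation. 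The paper instead splits Case B by $\h(x,y,u,v,a,b)\in\{5,6\}$: when the height is $5$ and $\deg(a)\ge 2$, all cubics of $I^{un}$ lie in $(x,y)^2(u,v)$ and Remark~\ref{observ}(c) applies; in the remaining subcases it again exhibits explicit links of projective dimension $3$. Your closing remark that the linkage route is ``too lossy'' misreads the situation: the paper obtains the bound $4$ exactly by showing the relevant links have projective dimension $3$ and invoking Theorem~\ref{pd+1}(i), not by using a generic bound on $\pd(R/I^{un})$.
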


\begin{proof}
If ${\rm ht}(x,y,u,v)\leq 3$, then $I^{un}$ contains a quadric, so the statement follows by Theorem~\ref{pd+1}(iii). We may then assume ${\rm ht}(x,y,u,v)=4$. 
If ${\rm ht}(x,y,u,v,a,b)=6$, then $I^{un}=\big[(x,y)^2 + (ax+by)\big](u,v)$, which is linked to $L'=(x^2u, y^2v, xyuv, axuv - byuv, x^2y^2)$ via the complete intersection $x^2u,y^2v$. Since $\pd(R/L')=3$, we have that $\pd(R/I)\leq 4$.
We may then assume ${\rm ht}(x,y,u,v,a,b)\leq 5$. We first consider the case ${\rm ht}(x,y,u,v,a,b)=5$. In particular $ax+by\notin (x,y)^2+ (u,v)$. If   $\deg(a)=\deg(b)\geq 2$, then all cubics of $I^{un}$ lie in $(x,y)^2(u,v)$ by Lemma \ref{cubic}(i); thus $\pd(R/I)\leq 4$ by Remark \ref{observ}(c).
If instead $\deg(a)=\deg(b)=1$, we may assume $b\in (x,y,u,v,a)$. Since $(x,y)^2\subseteq (x,y)^2+(ax+by)$, we may clear the terms in $x$ and $y$ to further assume $b\in (u,v,a)$; now, after possibly choosing a different minimal generating set for $(x,y)$ and $(u,v)$ we may assume $b=u$. Since ${\rm ht}(x,y,u,v,a)=5$, we obtain
$$\begin{array}{ll}
I^{un} 	& = \big[(x,y)^2+(ax+uy)\big]\cap (u,v)\\
		& = (xav + yuv, y^2v, xyv, x^2v, xau + yu^2 , y^2u, xyu, x^2u).
\end{array}$$
Let $L'=(x^2u,y^2v):I^{un}=(y^2v, x^2u, xyuv, axuv - yu^2v, x^2y^2)$ and observe $\pd(R/L')=3$, then $\pd(R/I)\leq 4$ by Theorem \ref{pd+1}(i).

We may therefore assume ${\rm ht}(x,y,u,v,a,b)= 4={\rm ht}(x,y,u,v)$, i.e. $a,b\in (x,y,u,v)$. Similarly to the above, since $(x,y)^2\subseteq (x,y)^2+(ax+by)$, we may further assume $a,b\in (u,v)$. Thus  $ax+by \in (u,v)$, whence 
$$I^{un}=(ax+by)+\big[(x,y)^2\cap (u,v)\big]=(ax+by) + (x,y)^2(u,v).$$ 

Consider $L'=(x^2u,y^2v):I^{un}$, and note that $L'$ is an unmixed ideal with $e(R/L')=6$, by Theorem \ref{linkage}(b). 
It is easy to check that the ideal $L_1=\big(x^2u,y^2v,xyuv,x^2y^2,(ax-by)uv\big)$ is contained in $L'$. Also, one has
$$L_1=(x^2,v)\cap (y^2,u)\cap \big[(x,y)^2+(ax-by)\big],$$
so  $L_1$ is unmixed of multiplicity $6$ by Proposition \ref{Ass}. Therefore, $L'=L_1$.
Now observe that $L':uv=(x,y)^2+(ax-by)$, so by \cite[Prop.~11]{En1} $L':uv$ is $(x,y)$-primary with $\pd(R/(L':uv))=3$. Also, $L'+(uv)=(uv,x^2u,y^2v,x^2y^2)$ hence $\pd(R/(L'+(uv)))=3$. Then $\pd(R/L')\leq 3$ by Lemma \ref{ses}(1), and Theorem \ref{pd+1}(i) gives $\pd(R/I)\leq 4$.
\end{proof}

\section{The case $e(R/I)=6$}

In this short section we address the case $e(R/I)=6$.

\begin{prop}\label{e6}
If $e(R/I)=6$, then $\pd(R/I)\leq 5$.
\end{prop}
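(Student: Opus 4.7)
The plan is to apply linkage to reduce the problem to analyzing an unmixed height-two ideal of multiplicity three. Specifically, set $L = (f_1,f_2):f_3$. Since $I$ is an almost complete intersection of height two, Remark \ref{observ}(d) gives $L = (f_1,f_2):I^{un}$, so $I^{un}$ is linked to $L$ via the complete intersection $(f_1,f_2)$ of two cubics. By the linkage multiplicity formula (Theorem \ref{linkage}(b)),
\[e(R/L) = e(R/(f_1,f_2))-e(R/I^{un}) = 9-6 = 3,\]
so $L$ is an unmixed ideal of height two and multiplicity three. By Remark \ref{shapee=3}, $L$ must therefore fall into one of the five types $\langle 3;1\rangle$, $\langle 1;3\rangle$, $\langle 1,2;1,1\rangle$, $\langle 1,1;1,2\rangle$, or $\langle 1,1,1;1,1,1\rangle$, and it suffices to verify the bound in each case.

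I would then proceed by a short case-by-case dispatch, matching the second table in the introduction. If $L$ is prime (type $\langle 3;1\rangle$) then Proposition \ref{non-deg}(3) makes $R/L$ Cohen-Macaulay, and Theorem \ref{pd+1}(i) gives $\pd(R/I)\leq 3$. If $L$ is primary to a linear prime (type $\langle 1;3\rangle$), then $\pd(R/I)\leq 5$ by \cite[Proposition~5.3]{MM}. The two ``mixed'' types $\langle 1,2;1,1\rangle$ and $\langle 1,1;1,2\rangle$ were settled by Engheta in \cite[Sec.~2.3]{En3}, with bounds $4$ and $5$ respectively. Finally, when $L$ is the intersection of three distinct linear primes of height two (type $\langle 1,1,1;1,1,1\rangle$), Proposition \ref{111;111}, applied with $K=L$, yields $\pd(R/I)\leq 4$. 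Taking the worst of the five bounds gives $\pd(R/I)\leq 5$.

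The main obstacle for the proposition itself is really just recognizing that passing to the link cleanly lands us in the multiplicity-three setting, since the genuine content in each subcase has already been placed elsewhere: trivially from Proposition \ref{non-deg}(3) and Theorem \ref{pd+1}(i) in one case, from Proposition \ref{111;111} (proved earlier in the paper) in another, and from the structure theorems of \cite{MM} and the case-analysis of \cite{En3} in the remaining three. Once those external results are in hand, the proof of Proposition \ref{e6} reduces to the two-line linkage computation $e(R/L)=3$ followed by invocation of the appropriate entry of Table \ref{table2}.
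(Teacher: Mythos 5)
Your proposal is correct and follows essentially the same route as the paper: pass to the link $L=(f_1,f_2):I$, compute $e(R/L)=9-6=3$ via Theorem \ref{linkage}(b), and dispatch the five multiplicity-three types of Remark \ref{shapee=3} to Proposition \ref{non-deg}(3) with Theorem \ref{pd+1}(i), \cite[Proposition 5.3]{MM}, \cite{En3}, and Proposition \ref{111;111}, exactly as the paper does.
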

\begin{proof}
By Theorem~\ref{pd+1}(ii)--(iii) we may assume $I^{un}$ is generated in degree 3 or higher. Recall that $L$ is linked to $I^{un}$ via a complete intersection of two cubics in $I$, and so $e(R/L)=3$ by Theorem \ref{linkage}(b); thus, $L$ has one of the five forms listed in Remark \ref{shapee=3}.

In case (1), Proposition \ref{non-deg}(3) implies $L$ is Cohen-Macaulay, so $\pd(R/I)\leq 3$ by Theorem~\ref{pd+1}(i).  Cases (3) and (5) are proved in \cite[pp. 70--71]{En3}. Case (4) was proved in Proposition \ref{111;111} and case (2) in \cite[Proposition 5.3]{MM}.

\end{proof}

\section{The Cases $e(R/I) = 4$ or $5$}

These final two cases will occupy the remainder of the paper.  Given the intricacies of the arguments, each case  for the type of $I^{un}$ or $L := (f_1,f_2):I$ not covered by previous work has its own designated subsection.
  
\begin{prop}\label{e45}
If $e(R/I)=4$ or $5$, then $\pd(R/I)\leq 5$.
\end{prop}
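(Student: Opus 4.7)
The plan is to prove the statement by a case analysis on the type $\langle\underline{e};\underline{\lambda}\rangle$ of an unmixed, height-two ideal of multiplicity four. When $e(R/I) = 4$, this analysis is applied to $I^{un}$ itself; when $e(R/I) = 5$, it is applied to $L = (f_1,f_2):I$. In the latter case, Theorem~\ref{linkage}(b) gives $e(R/L) + e(R/I^{un}) = e(R/(f_1,f_2)) = 9$, so $e(R/L) = 4$ and $L$ is again unmixed of height two. Applying the Associativity Formula (Proposition~\ref{Ass}) together with the classification of height-two primes in Proposition~\ref{non-deg} then shows that such an ideal must have one of the eleven types listed in Table~\ref{table2}, one for each way of writing $4 = \sum_i e_i \lambda_i$ subject to the admissible values of $e_i = e(R/\p_i)$ (namely $1$, $2$, or a value $\geq 3$ forcing Cohen-Macaulayness of the corresponding primary component).

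The eleven types are dispatched as follows. For type $\langle 4;1\rangle$ the desired bound is due to Engheta in \cite[Sec.~2.1--2.2]{En3}. Type $\langle 1;4\rangle$ is handled by \cite[Propositions~5.4 and~5.5]{MM}. For types $\langle 2;2\rangle$ and $\langle 2,2;1,1\rangle$ each primary component sits inside a complete intersection of type $(1,2)$ (Proposition~\ref{non-deg}(2)), so the arguments of \cite[Prop.~11]{En1} produce a quadric in the unmixed ideal, and Theorem~\ref{pd+1}(iii) then yields $\pd(R/I) \le 4$. The remaining seven types are exactly the content of the new Propositions~\ref{13;11}, \ref{11;13}, \ref{12;21}, \ref{11;22}, \ref{112;111}, \ref{111;112} and \ref{1111;1111}, each proved in its own subsection below. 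Together with Theorem~\ref{pd+1}(i) (which bounds $\pd(R/I)$ by $\pd(R/L)+1$ or $\pd(R/I^{un})+1$), these references cover every subcase and give $\pd(R/I) \le 5$ throughout.

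The hard part is the seven residual types, where in general the unmixed ideal need not contain a linear form or a quadric, is not Cohen--Macaulay, and does not obviously come from a small number of variables. The uniform strategy for their proofs is the following: first apply the structure theorems of \cite{MM} (or linked analogues developed in the present paper) to each primary component so as to express its generators in terms of a small controlled list of forms; then use the equality of Hilbert functions and minimal-degree generators between $I^{un}$ and $L$ coming from Theorem~\ref{linkage}(d) to constrain the remaining degrees of freedom; and finally combine Lemma~\ref{ses}, the identities in Remark~\ref{observ}, the intersection lemma (Lemma~\ref{cubic}), and Proposition~\ref{<=t} to bound $\pd(R/I)$ either by $\pd$ of an ideal extended from a polynomial subring in at most five variables, or by $\pd+1$ of a Cohen--Macaulay ideal handled via Theorem~\ref{pd+1}(i). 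The principal technical obstacle in each of these seven subcases will be to show that, after the structure-theoretic reduction of each primary component, the ``extra'' generators forced by linkage or by Remark~\ref{observ}(a) can always be absorbed into a five-variable subring, so that Proposition~\ref{<=t} applies; this book-keeping is what accounts for the length of the remaining sections.
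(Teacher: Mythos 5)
Your proposal follows essentially the same route as the paper's proof: set $K = I^{un}$ when $e(R/I)=4$ and $K=(f_1,f_2):I$ when $e(R/I)=5$, use linkage to get $e(R/K)=4$ with $K$ unmixed of height two, enumerate the eleven admissible types, dispatch $\langle 4;1\rangle$, $\langle 2;2\rangle$, $\langle 1;4\rangle$ and $\langle 2,2;1,1\rangle$ via prior results and the quadric criterion of Theorem~\ref{pd+1}(iii), and defer the remaining seven types to the dedicated propositions. The only blemish is a non-load-bearing aside: a height-two prime of multiplicity $4$ need not be Cohen--Macaulay (which is precisely why the paper invokes \cite{BS} in the $\langle 4;1\rangle$ case), but since you correctly route that type through Engheta's result, nothing in your argument depends on it.
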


\begin{proof} If $e(R/I) = 4$, set $K = I^{un}$.  If $e(R/I) = 5$, set $K = (f_1,f_2):I$.   Then $K$ is unmixed and $e(R/K)=4$ by Theorem \ref{linkage}(b).  By the associativity formula $K$ has one of the following $11$ types:
\begin{enumerate}
\item $\langle 4;1 \rangle$
\item $\langle 2;2 \rangle$
\item $\langle 1;4 \rangle$
\item $\langle 1,3;1,1 \rangle$
\item $\langle 1,1;1,3 \rangle$
\item $\langle 2,2;1,1 \rangle$
\item $\langle 1,2;2,1 \rangle$
\item $\langle 1,1;2,2 \rangle$
\item $\langle 1,1,2;1,1,1 \rangle$
\item $\langle 1,1,1;1,1,2 \rangle$
\item $\langle 1,1,1,1;1,1,1,1 \rangle$
\end{enumerate}
In case (1), $K$ is prime of almost minimal multiplicity; when $K$ contains a linear form, Theorem~\ref{pd+1}(b) yields $\pd(R/I) \le 3$; when $K$ is non-degenerate, by \cite{BS} either $\pd(R/K)\leq 4$, or $K$ contains a quadric, or is extended from a five-variable polynomial ring; therefore $\pd(R/I) \le 5$. 

In case (2), $K$ is $\p$-primary, where $\p = (z,q)$ is a prime ideal generated by a linear form $z$ and a quadric $q$.  Since $e(R_\p/K_\p) = 2$, we have $\p^2 \subseteq K$.  In particular, the quadric $z^2 \in K$ and hence $\pd(R/I) \le 4$ by Theorem~\ref{pd+1}.

Case (3) is proved in \cite[Prop. ~5.4 and 5.5]{MM}. In case (6), $K$ contains a quadric because  $K$ is the intersection of two primes each 
containing a linear form.  Then $\pd(R/I)\leq 4$ by Theorem \ref{pd+1}(iii). The remaining cases are proved in Propositions~\ref{L13;11}--\ref{13;11} (Case 4), \ref{L11;13}--\ref{11;13} (Case 5), \ref{12;21} (Case 7), \ref{11;22} (Case 8), \ref{112;111} (Case 9), \ref{111;112} (Case 10), and \ref{1111;1111} (Case 11).
\end{proof}

\bigskip

\subsection{Type $\boldsymbol{\langle 1,2;2,1\rangle}$}

\begin{prop}\label{12;21}
Let $K$ be the intersection of a prime $H_1$ of multiplicity two with an ideal $H$ of multiplicity two that is primary to a linear prime $(x,y)$.
If either $K=I^{un}$ or $K =L$, then $\pd(R/I)\leq 5$.
\end{prop}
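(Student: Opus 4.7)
The plan is to follow the template of Proposition~\ref{L11;12}: reduce $K$ to a normal form using the structure theorems of Proposition~\ref{non-deg}(2) and \cite[Prop.~11]{En1}, and in each resulting case either exhibit a quadric in $K$ (so that Theorem~\ref{pd+1}(iii) yields $\pd(R/I)\le 4$), or construct an explicit linkage of $K$ to an ideal of small projective dimension. Since $\pd(R/I)\le\pd(R/K)+1$ in both situations $K=I^{un}$ and $K=L$ (by Theorem~\ref{pd+1}(i) together with Theorem~\ref{linkage}(c) in the former case), it will suffice to show $\pd(R/K)\le 4$.

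By Proposition~\ref{non-deg}(2), $H_1=(\ell,q)$ for a linear form $\ell$ and an irreducible quadric $q$. By \cite[Prop.~11]{En1}, either $H$ contains a linear form (in which case, after a coordinate change, $H=(x,y^2)$, and then $\ell x\in H_1\cap H=K$ is a quadric), or $H=(x,y)^2+(ax+by)$ with $\h(x,y,a,b)=4$. In the latter case, if $\ell\in(x,y)$ we may further normalize $\ell=x$, so $xy\in (x)\cap (x,y)^2\subseteq K$ is a quadric. In these subcases Theorem~\ref{pd+1}(iii) in fact gives $\pd(R/I)\le 4$.

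It remains to treat $H=(x,y)^2+(ax+by)$ with $\h(x,y,a,b)=4$ and $\ell=z$ linearly independent from $x,y$. An element of $[H_1]_2$ has the form $\alpha z+cq$ with $\alpha\in R_1$, $c\in k$, and it lies in $H$ exactly when $cq\equiv -\alpha z\pmod{(x,y)^2+(ax+by)}$. When this congruence is solvable, Lemma~\ref{cubic}(ii) lets one adjust $\alpha$ to produce a genuine quadric of $K$, and Theorem~\ref{pd+1}(iii) applies. Otherwise---roughly, when $q$ genuinely depends on a variable outside $\{x,y,z,a,b\}$---I would pick a complete intersection $C$ of two cubics inside $K$ (with the specific choice adapted to the subcase, built from the products $z\cdot[H]_2$ together with suitable cubics arising from $q$), compute $L'=C:K$ using Lemma~\ref{cubic} and Remark~\ref{observ}, and exhibit a primary decomposition of $L'$ showing $\pd(R/L')\le 4$. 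Then Theorem~\ref{linkage}(c) gives $\pd(R/K)=\pd(R/L')\le 4$, as required. In degenerate configurations where the heights among $\{x,y,z,a,b,q\}$ collapse enough for $K$ or an intermediate link to be extended from a smaller polynomial ring, Proposition~\ref{<=t} bypasses the linkage entirely.

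I expect the main obstacle to be this last linkage subcase: choosing $C$ so that $L'=C:K$ admits a tractable primary decomposition, and verifying that decomposition in every configuration of the heights $\h(x,y,z,a,b,q)$. The bookkeeping should parallel the closing computations of Propositions~\ref{L11;12} and~\ref{111;111}, but the presence of the irreducible quadric $q$ (rather than a linear form as in those earlier arguments) introduces extra ``interference'' terms when intersecting and computing colons, so identifying the correct $C$ and carrying out the decomposition case-by-case will be the most technically delicate step.
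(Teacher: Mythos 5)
Your opening reductions coincide with the paper's actual proof of Proposition~\ref{12;21}: invoke Proposition~\ref{non-deg}(2) to write $H_1=(z,q)$, invoke \cite[Prop.~11]{En1} to get $H=(x,y^2)$ (quadric in $K$, done) or $H=(x,y)^2+(ax+by)$ with $\h(x,y,a,b)=4$, and dispose of every configuration in which $K$ contains a quadric via Theorem~\ref{pd+1}(iii). But from that point on the proposal has two genuine problems. First, your reduction ``it suffices to show $\pd(R/K)\le 4$'' is unjustified when $K=I^{un}$. Theorem~\ref{pd+1}(i) bounds $\pd(R/I)$ by $\pd(R/L')+1$ for a \emph{link} $L'$ of $I^{un}$, and Theorem~\ref{linkage}(c) transfers projective dimension only across an even (double) link $J\sim H\sim L$; a single link does not preserve projective dimension, so knowing $\pd(R/I^{un})\le 4$ gives you nothing directly about $\pd(R/I)$. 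This is why, in the paper, every subcase with $K=I^{un}$ is closed either by exhibiting an explicit link $L'=C:K$ with $\pd(R/L')\le 3$ (or $4$), or by showing two of the three generators of $I$ lie in a principal ideal and applying Corollary~\ref{x}, or by Proposition~\ref{<=t} --- never by bounding $\pd(R/I^{un})$ itself.

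Second, and more seriously, the entire substance of the result --- the case $H=(x,y)^2+(ax+by)$ with $[K]_2=0$, which the paper isolates as Proposition~\ref{inters} and proves over Lemmas~\ref{lin2}, \ref{l_i} and Proposition~\ref{deg2} --- is left in your write-up as a plan (``I would pick a complete intersection $C$ \ldots adapted to the subcase'') rather than an argument. You correctly anticipate that this is the hard part, but the single device you propose (choose $C$, compute $L'=C:K$, find a primary decomposition) does not in fact suffice uniformly: the paper's treatment additionally requires the short exact sequence $0\to R/K\to R/H\oplus R/(z,q)\to R/(H+(z,q))\to 0$ together with colon/sum arguments via Lemma~\ref{ses} to control $\pd(R/(H+(z,q)))$, the ``two generators in $(g)$'' trick of Corollary~\ref{x} in the final case of Lemma~\ref{l_i} and in Case~1 of Proposition~\ref{deg2}, and a delicate identification of $C:K$ with a candidate ideal $L'$ by comparing localizations at the associated primes of $C$ (Case~2a of Proposition~\ref{deg2}), none of which is a routine primary decomposition. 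As it stands the proposal establishes only the easy quadric-containing subcases and does not constitute a proof of the proposition.
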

\begin{proof}
If $K$ contains a quadric, then $\pd(R/I)\leq 4$ by Theorem \ref{pd+1}(iii). So we may assume that $[K]_2=0$. 
Since $e(R/H_1) = \h(H_1) = 2$, by Proposition \ref{non-deg}(2) $H_1=(z,q)$ for some linear form $z$ and  some quadric $q$. By \cite[Proposition~11]{En1}, either $H=(x,y^2)$ or $H=(x,y)^2 + (ax+by)$ for forms $a,b$ such that ${\rm ht}(x,y,a,b)=4$. In the former case, $K$ would contain a quadric.  In the latter case the statement is proved in the following Proposition~\ref{inters}.
\end{proof}

Note that the following proposition also covers the case when $I^{un}$ or $L$ is of type $\langle 1,1,2;1,1,1\rangle$ and the intersection of the two linear primes has the form $(z,v) \cap (z,w) = (z,vw)$.  

\begin{prop}\label{inters}
Let $K=H\cap (z,q)$, where $z$ is a linear form and $q$ is a quadric with ${\rm ht}(z,q)=2$, and $H=(x,y)^2+(ax+by)$ for linear forms $x,y$ and homogeneous forms $a,b$ with ${\rm ht}(x,y,a,b)=4$. Further suppose that $K$ contains no quadrics. If either $K=I^{un}$ or $K = L$, then $\pd(R/I)\leq 5$. 
\end{prop}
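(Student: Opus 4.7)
My plan is to establish $\pd(R/K) \leq 4$, which together with Theorem~\ref{pd+1}(i) and the linkage-invariance of projective dimension (Theorem~\ref{linkage}(c)) gives $\pd(R/I) \leq \pd(R/L)+1 = \pd(R/K)+1 \leq 5$ in both cases $K = I^{un}$ and $K = L$.

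The first step is to show that $H$ is Cohen-Macaulay with $\pd(R/H) = 2$. Since $\ht(x,y,a,b) = 4$, a direct radical computation gives $\sqrt{H} = (x,y)$, and the hypothesis on $H$ tells us $H$ is $(x,y)$-primary; so every element of $R$ outside the height-$2$ prime $(x,y)$ is a non-zero-divisor on $R/H$. Extending a regular sequence of length $\dim R - 2$ chosen outside $(x,y)$ then forces $\depth(R/H) = \dim R - 2$, so $R/H$ is Cohen-Macaulay and $\pd(R/H) = 2$ by Auslander-Buchsbaum. Applying Lemma~\ref{ses}(2) to the decomposition $K = H \cap (z,q)$ yields
$$\pd(R/K) \leq \max\{\pd(R/H),\, \pd(R/(z,q)),\, \pd(R/(H+(z,q)))-1\} = \max\{2,\, \pd(R/(H+(z,q)))-1\},$$
so it suffices to show $\pd(R/(H+(z,q))) \leq 5$.

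To bound $\pd(R/(H+(z,q)))$ I would split by the heights of the relevant subideals. First observe that the no-quadric hypothesis $[K]_2 = 0$ forces $z \notin (x,y)$, as otherwise $x^2$ would lie in $K$. In the generic configuration $\ht(x,y,z,q,a,b) = 6$, passing to $R' = R/(z,q)$ the image $\bar H = (\bar x, \bar y)^2 + (\bar a\bar x + \bar b\bar y)$ retains the same structure with $\ht(\bar x, \bar y, \bar a, \bar b) = 4$ in $R'$; the first step applied inside $R'$ shows $\bar H$ is Cohen-Macaulay of codimension $2$ there, and the change-of-rings formula for a regular sequence of length $2$ produces $\pd_R(R/(H+(z,q))) = 2 + 2 = 4$. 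Hence $\pd(R/K) \leq 3$ in this generic case.

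The remaining degenerate sub-cases $\ht(x,y,z,q,a,b) \leq 5$ require more delicate analysis. These arise, for instance, when $q$ reduces modulo $(x,y,z,a,b)$, when $d := \deg(a) = \deg(b) = 1$ and $z$ lies in the linear span of $a,b$ modulo $(x,y)$, or when $a,b$ partially fall into $(x,y,z,q)$. In each such configuration I plan to either apply Proposition~\ref{<=t} once $K$ (or a link of $K$) is shown to be extended from a polynomial subring in at most five forms of a regular sequence, or otherwise to choose an explicit complete intersection of height $2$ inside $K$ and verify by direct computation that the linked ideal has projective dimension at most $3$, in the spirit of the monomial-linkage arguments in Propositions~\ref{L11;12} and~\ref{111;111}. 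The main obstacle will be the case-analysis for these degenerate configurations and the careful book-keeping needed to ensure the no-quadric hypothesis either excludes the truly problematic alignments of $(z,q)$ with $H$ or forces a sufficiently restrictive form on the cubic generators of $K$ for the analysis to close.
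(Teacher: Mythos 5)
Your argument has two fatal gaps, one algebraic and one structural. First, the claim that $H=(x,y)^2+(ax+by)$ is Cohen--Macaulay with $\pd(R/H)=2$ is false: this ideal is unmixed and $(x,y)$-primary, but unmixedness does not imply Cohen--Macaulayness, and the step ``every element outside $(x,y)$ is a non-zero-divisor, so a regular sequence of length $\dim R-2$ exists'' is exactly the fallacy --- after killing one such non-zero-divisor the quotient acquires embedded primes and the process stops. In fact $\pd(R/H)=3$ (this is \cite[Proposition~11]{En1}, invoked repeatedly in the paper, e.g.\ in the proofs of Proposition~\ref{L11;12} and Lemma~\ref{lin2}); one sees it from the exact sequence $0\to R/(x,y)(-3)\to R/(x,y)^2\to R/H\to 0$, using $(x,y)^2:(ax+by)=(x,y)$. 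Consequently your change-of-rings count in the ``generic'' case gives $\pd(R/(H+(z,q)))=3+2=5$, not $4$; the bound $\pd(R/K)\le 4$ from Lemma~\ref{ses}(2) happens to survive, but the computation as written is wrong, and the regularity of $q$ on $R/(H+(z))$ (which has embedded primes of height $4$) still needs to be checked rather than asserted.

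Second, and more seriously, the reduction ``$\pd(R/I)\le\pd(R/L)+1=\pd(R/K)+1$ in both cases'' is unjustified when $K=I^{un}$. Theorem~\ref{linkage}(c) preserves projective dimension only under a \emph{double} link $J\sim H\sim L$; a single link $I^{un}\sim L$ does not preserve it, so $\pd(R/I^{un})\le 4$ gives you no control on $\pd(R/L)$. This is why the paper, in every $K=I^{un}$ subcase, either exhibits an explicit second link $L'$ of $I^{un}$ with $\pd(R/L')\le 3$ (so that $\pd(R/L)=\pd(R/L')$ by the double-link theorem), or shows two generators of $I$ lie in a principal ideal (Corollary~\ref{x}), or invokes Proposition~\ref{<=t} --- none of which passes through a bound on $\pd(R/I^{un})$ itself. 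Finally, the degenerate configurations $\h(x,y,z,q,a,b)\le 5$, which you defer to ``apply Proposition~\ref{<=t} or find an explicit link,'' are precisely where essentially all of the paper's work lies (Lemmas~\ref{lin2} and~\ref{l_i} and Proposition~\ref{deg2}, split by $\deg(a)$ and by several height conditions on $x,y,z,q,a,b,l_i,c_i$); as it stands the proposal does not contain a proof of any of them.
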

We divide the proof into two cases, based on the degrees of $a$ and $b$. First, we consider the case where $a$ and $b$ are linear forms.

\begin{lem}\label{lin2}
Proposition~\ref{inters} holds when $\deg(a)=\deg(b)=1$. 
\end{lem}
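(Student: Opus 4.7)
The plan is to bound $\pd(R/K)\leq 4$, which together with Theorem~\ref{pd+1}(i) and Theorem~\ref{linkage}(c) yields $\pd(R/I)\leq 5$, since $K$ is either $I^{un}$ or $L$. Applying Lemma~\ref{ses}(2) to the decomposition $K = H\cap(z,q)$,
\[
\pd(R/K) \leq \max\bigl\{\pd(R/H),\;\pd(R/(z,q)),\;\pd(R/(H+(z,q)))-1\bigr\}.
\]
The pair $a,b$ is a regular sequence on $A/H$ where $A = k[x,y,a,b]$ (because $(x,y)^2 \subseteq H$), so $H$ is Cohen--Macaulay in $A$ and $\pd_R(R/H) = 2$; also $\pd(R/(z,q)) = 2$ since $(z,q)$ is a complete intersection. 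The problem thus reduces to showing $\pd(R/(H+(z,q)))\leq 5$.

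My first move is Proposition~\ref{<=t}: if both $z$ and $q$ lie in a polynomial subring on at most five linear forms (namely $x,y,a,b$ and possibly one additional form $w$), then $K$ is extended from this $5$-variable subring, and Proposition~\ref{<=t} with $g=2$ and $t\leq 5$ yields $\pd(R/I)\leq 5$ directly; when $z\in\operatorname{span}(x,y,a,b)$ and $q\in k[x,y,a,b]_2$, one gets the stronger bound $\pd(R/I)\leq 4$. I may therefore assume $q$ involves at least two linearly independent forms outside $\operatorname{span}(x,y,a,b,z)$.

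For these remaining configurations, I would pass to $R' = R/(z)$; since $z$ is a nonzerodivisor on $R$, $\pd_R(R/(H+(z,q))) \leq \pd_{R'}(R'/(\bar H+\bar q)) + 1$, so it suffices to show $\pd_{R'}(R'/(\bar H+\bar q))\leq 4$. A linear change of variables reduces to three cases: (i) $z\notin(x,y,a,b)$, in which case $\bar H = (\bar x,\bar y)^2 + (\bar a\bar x+\bar b\bar y)$ is Cohen--Macaulay in $R'$ with $\pd_{R'}=2$; (ii) $z\in(x,y)$, WLOG $z=x$, so that $\bar H = \bar y(\bar y,\bar b)$, with $\pd_{R'}=2$ via the short exact sequence $0\to R'/(\bar y,\bar b)\to R'/\bar H\to R'/(\bar y)\to 0$; and (iii) $z\in(a,b)$, WLOG $z=a$, so that $\bar H = (\bar x,\bar y)^2 + (\bar b\bar y)$. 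In cases (i) and (ii), adjoining the quadric $\bar q$ via Lemma~\ref{ses}(1) (pivoting at a linear form in the associated prime of $\bar H$) keeps the projective dimension at most $3$: either $\bar q$ is regular on $R'/\bar H$ (raising pd by $1$), or $\bar q$ lies in the radical of $\bar H$ and a rank analysis of the coefficient matrix of the two ``top-level'' generators of $\bar H+(\bar q)$ modulo $(\bar x,\bar y)^2$, in the spirit of Lemma~\ref{cubic}(ii), shows that $\bar H+(\bar q)$ either collapses back to $\bar H$ (impossible under the assumption that $q$ involves two new linear forms, since this would force $ax+by\in (z,q)$ and violate the no-quadric hypothesis) or to the complete intersection $(\bar x,\bar y)$.

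The main obstacle is case (iii). Here $\bar H$ is not Cohen--Macaulay---it has embedded associated prime $(\bar x,\bar y,\bar b)R'$---and $\pd_{R'}(R'/\bar H)=3$ (verified via Lemma~\ref{ses}(1) with $\bar H:\bar y = (\bar x,\bar y,\bar b)$ and $\bar H+(\bar y) = (\bar x^2,\bar y)$). To bound $\pd_{R'}(R'/(\bar H+\bar q))\leq 4$, I would apply Lemma~\ref{ses}(1) once more at $\bar y$: on one side, $(\bar H+\bar q):\bar y$ is contained in an ideal of the form $(\bar x,\bar y,\bar b,\bar q')$ (with $\bar q'$ a quadric or linear form) whose projective dimension is at most $4$ (via a change-of-rings argument modulo the complete intersection $(\bar x,\bar y,\bar b)$); on the other side, $(\bar H+\bar q)+(\bar y) = (\bar x^2,\bar y,\bar q)$ has projective dimension at most $3$ (by splitting further via Lemma~\ref{ses}(1) at $\bar q$ modulo $(\bar x,\bar y)$). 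Combining, $\pd_{R'}(R'/(\bar H+\bar q))\leq 4$, and therefore $\pd(R/(H+(z,q)))\leq 5$, finishing the proof.
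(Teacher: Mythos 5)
Your overall strategy --- bounding $\pd(R/K)\leq 4$ via Lemma~\ref{ses}(2) applied to $K=H\cap(z,q)$ and then controlling $\pd(R/(H+(z,q)))$ by passing to $R/(z)$ --- is a genuinely different route from the paper's (which normalizes $b$ and $q$ according to the value of $\h(x,y,z,a,b)$ and computes explicit links $C:K$ with projective dimension $3$). But the execution rests on a false claim that you use twice: the ideal $H=(x,y)^2+(ax+by)$ with $\h(x,y,a,b)=4$ is \emph{not} Cohen--Macaulay and $\pd(R/H)=3$, not $2$. This is exactly Engheta's classification \cite[Proposition~11]{En1}, invoked throughout the paper (e.g.\ in the proof of Proposition~\ref{L11;12}, where $\pd(R/((x,y)^2+(ax-by)))=3$ is stated explicitly); concretely, $a,b$ is not a regular sequence on $k[x,y,a,b]/H$, as one sees from the $h$-vector $1,2,-1$ or from the embedded behavior of $H+(a)$. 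By itself this error does not break the arithmetic, since $\max\{3,2,\pd(R/(H+(z,q)))-1\}\leq 4$ still only requires $\pd(R/(H+(z,q)))\leq 5$.

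The genuine gap is in your case (i). Since $\pd_{R'}(R'/\bar H)=3$, adjoining a nonzerodivisor $\bar q$ gives projective dimension $4$, not $3$ (this still fits your budget, but not for the reason you state). More seriously, when $\bar q\in(\bar x,\bar y)$, say $\bar q=l_1\bar x+l_2\bar y$, your claimed dichotomy fails: $\bar H+(\bar q)=(\bar x,\bar y)^2+(\bar a\bar x+\bar b\bar y,\,l_1\bar x+l_2\bar y)$ contains no linear forms, so it can never ``collapse to the complete intersection $(\bar x,\bar y)$,'' and for generic $l_1,l_2$ it does not collapse to $\bar H$ either --- it acquires the embedded prime coming from $\bar a l_2-\bar b l_1\in(\bar H+(\bar q)):(\bar x,\bar y)$ and has projective dimension $4$. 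The needed bound $\pd_{R'}(R'/(\bar H+(\bar q)))\leq 4$ does appear to be true (one can check, for instance, that $(\bar H+(\bar q))\colon\bar x$ is $(\bar x,\bar y,\bar al_2-\bar bl_1)$ or $(\bar x,\bar y)$ or $(\bar x,\bar y,l_1)$ depending on degeneracies, while $(\bar H+(\bar q))+(\bar x)=(\bar x)+\bar y(\bar y,\bar b,l_2)$ has projective dimension at most $4$), but no such argument appears in your write-up; what you wrote in its place is incorrect. A secondary, smaller issue: your reduction via Proposition~\ref{<=t} leaves the wrong complement (when $z\notin\operatorname{span}(x,y,a,b)$ the remaining case only requires $q$ to involve \emph{one} form outside $\operatorname{span}(x,y,a,b,z)$), and case (ii) ($z\in(x,y)$) is actually vacuous because it forces the quadric $x^2\in K$. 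The skeleton of your proof is salvageable, but as written the key subcases are justified by false statements.
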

\begin{proof}
Since $K$ is generated in degree 3 and higher, we have ${\rm ht}(x,y,z)=3$, and since by assumption ${\rm ht}(x,y,z,a,b)\geq 4$, without loss of generality we may assume $\h(x,y,z,a)=4$. \\

\noindent \underline{Case 1: Assume ${\rm ht}(x,y,z,a,b)=4$.}\\
Then $b\in (x,y,z,a)$, and since $(x,y)^2\subseteq H$, we may assume $b\in (z,a)$. After possibly a linear change of the $x,y$ variables, we may assume $b=z$. Then $K=\big[(x,y)^2+(ax+yz)\big]\cap (z,q)$. Next, we prove that ${\rm ht}(x,y,z,a,q)=4$. Indeed, if not, then $J:=\big[(x,y)^2+(ax,z)\big]:q=(x,y)^2+(ax,z)$ and by Remark \ref{observ}(b) all cubics of $K$ are contained in $(z)+qJ=(z) + q\big[(x,y)^2+(ax)\big]$.  Therefore, $[K]_3 \subseteq (z)$, contradicting Remark \ref{observ}(a). Therefore ${\rm ht}(x,y,z,a,q)=4$, that is, $q\in (x,y,z,a)$. After reducing $q$ modulo $z$ we may actually assume $q\in(x,y,a)$, say $q=l_1x+l_2y+l_3a$ for linear forms $l_1,l_2,l_3$. We prove this case in Lemma \ref{l_i} below.\\
 
\noindent \underline{Case 2: Assume ${\rm ht}(x,y,z,a,b)=5$.}\\
 The assumption yields that $H+(z)$ is an $(x,y,z)$-primary ideal. Observe that $q\in (x,y,z)$, since otherwise $\big[H+(z)\big]:q= H+(z)$ and
$$K\subseteq \big[H+(z)\big]\cap (z,q)=(z)+\big[\big(H+(z)\big)\cap (q)\big]=(z)+\big[H+(z)\big]q=(z) + qH$$
proving that every cubic in $K$ is multiple of $z$, a contradiction to Remark~\ref{observ}(a). Therefore $q\in (x,y,z)$, and after possibly clearing the term in $z$, we may assume $q\in (x,y)$. Thus $q(x,y)\subseteq K$, but $q\notin H$, since otherwise $K$ would contain a quadric.  We claim $K=q(x,y) + zH$. Indeed, by the above, $(q)\cap \big[H+(z)\big]=q\big[\big(H+(z)\big):q\big]=q(x,y,z)$, then
$$K\subseteq \big[H+(z)\big]\cap(z,q)=(z)+\big[(q)\cap \big(H+(z)\big)\big]=(z)+q(x,y,z)=(z)+q(x,y)$$
Since $(z)+q(x,y)\subseteq (z,q)$, it follows that \[K=H\cap \big[(z)+q(x,y)\big]=q(x,y)+\big[(z)\cap H\big]= q(x,y) + zH.\]

If $K=L$, note that $K:z=H:z=H$ whence $\pd(R/(K:z))=3$ (see e.g. \cite[Proposition~11]{En1}), and $K+(z)=q(x,y)+(z)$; hence $\pd(R/(K,z))=3$. By Lemma \ref{ses} we have $\pd(R/K)\leq 3$ and then $\pd(R/I)\leq 4$ by Theorem \ref{pd+1}(i). 

If instead $K=I^{un}$, write $q=l_1x+l_2y$ with ${\rm ht}(l_1,z)={\rm ht}(l_2,z)=2$. Note that $K=I^{un}$ is generated in degree at least 3 and is extended from $k[x,y,z,a,b,l_1,l_2]$, thus if ${\rm ht}(x,y,z,a,b,l_1,l_2)\leq 5$ the statement follows by Proposition \ref{<=t}. We may then assume ${\rm ht}(x,y,z,a,b,l_1)=6$. Then $C=(y^2z,xq)$ is a complete intersection of height two, and one checks
that $L'=C:K=(y^2z,x(l_1x+l_2y),xyl_1,xy^2,z(xal_1-y(bl_1-al_2)))$.  Moreover, $L':y = (xl_1, xy, yz)$ and $L'+(y) = (l_1x^2, al_2xz, y)$.  Since $\pd(R/(L':y)) = 2$ and $\pd(R/(L' + (y))) = 3$, by Lemma~\ref{ses} we have $\pd(R/L')\le 3$.  Therefore $\pd(R/I)\leq 4$ by Theorem \ref{pd+1}.
\end{proof}

The following lemma concludes the proof of Lemma~\ref{lin2}.
\begin{lem}\label{l_i}
Proposition~\ref{inters} holds if in addition one assumes that $q=l_1x+l_2y+l_3a$, $b=z$, and $a,l_1,l_2,l_3$ are linear forms.
\end{lem}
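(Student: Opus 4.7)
Our goal is $\pd(R/I)\leq 5$. By Theorems~\ref{pd+1}(i) and~\ref{linkage}(c), this reduces to showing $\pd(R/K)\leq 4$, since in both cases ($K=I^{un}$ or $K=L$) one has $\pd(R/I)\leq \pd(R/K)+1$.

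My first move is to exploit the Koszul syzygies on the regular sequence $(x,y,a)$---which is regular of height $3$ because $\h(x,y,z,a)=4$---to shift the triple $(l_1,l_2,l_3)$ by scalar multiples of $(y,-x,0)$, $(a,0,-x)$, and $(0,a,-y)$ without altering $q$ or $K$. Set $V:=\langle x,y,z,a,l_1,l_2,l_3\rangle\subseteq R_1$. If $\dim V\leq 5$, then both $H=(x,y)^2+(ax+zy)$ and $q$ lie in the polynomial subring $k[V]$, so $K$ is extended from $k[V]$, and Proposition~\ref{<=t} applied to the almost complete intersection $I$ yields $\pd(R/I)\leq \max\{3,5\}=5$.

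For the remaining case $\dim V\in\{6,7\}$, my plan is to link $K$ via an explicit pair of cubics and analyze the linked ideal $L':=(c_1,c_2):K$. Using that $(z,q)$ is a prime of height two with $q$ irreducible modulo $z$, the cubics $c_1:=zx^2$ and $c_2:=qy^2$ form a regular sequence in $K$ in the generic situation (with alternatives such as $z(ax+zy)$ or $qx^2$ available in degenerate subcases); moreover, in the subcase $l_3\in(x,y)$ we additionally have $qx,qy\in K$ via the identity $K=(z,q)H+(qx,qy)$, which follows from $\ann_{R/H}(\bar q)\supseteq(\bar x,\bar y)$, and these facilitate the link computation. After writing $L'$ explicitly, I apply Lemma~\ref{ses} with cutting element $h=z$---noting that $H:z=H$ since $H$ is $(x,y)$-primary and $z\notin(x,y)$---and show by direct inspection that both $\pd(R/(L':z))\leq 3$ and $\pd(R/(L'+(z)))\leq 3$. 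This gives $\pd(R/L')\leq 3$, hence $\pd(R/K)=\pd(R/L')\leq 3$ by Theorem~\ref{linkage}(c), and finally $\pd(R/I)\leq 4$.

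The main obstacle is the generic subcase $\dim V=7$ with $l_3\notin(x,y,z,a)$: there $\bar q$ is a non-zero-divisor on $R/H$ (so $K:q=H$) but becomes a zero-divisor modulo $(H,z)$, producing non-Koszul syzygies of $(z,q)$ on $R/H$ that contribute to $K$ beyond the product ideal $(z,q)H$. Handling this requires splitting according to whether $l_3\in(x,y)$ or $l_3\notin(x,y)$, choosing different regular-sequence cubic pairs $(c_1,c_2)$ accordingly, and verifying the projective dimension bound on the resulting $L'$ case by case---this delicate bookkeeping is where the bulk of the argument lies.
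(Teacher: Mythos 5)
Your first step (disposing of the case $\h(x,y,z,a,l_1,l_2,l_3)\leq 5$ via Proposition~\ref{<=t}) coincides with the paper's, but the rest of the plan has a logical flaw and leaves the actual content unproved. The reduction ``$\pd(R/I)\leq\pd(R/K)+1$ in both cases'' is not justified: Theorem~\ref{pd+1}(i) gives $\pd(R/I)\leq\pd(R/L)+1$ for the link $L=(f_1,f_2):f_3$ only, and no inequality $\pd(R/I)\leq\pd(R/I^{un})+1$ is established anywhere in the paper. Similarly, ``$\pd(R/K)=\pd(R/L')$ by Theorem~\ref{linkage}(c)'' misapplies that theorem, which asserts equality of projective dimension across a \emph{double} link, not a single one. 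These confusions matter: if $K=I^{un}$, then bounding $\pd(R/L')$ for an explicit link $L'$ of $K$ does suffice (because $L'\sim I^{un}\sim L$ gives $\pd(R/L')=\pd(R/L)$, and Theorem~\ref{pd+1}(i) finishes), but if $K=L$, a link $L'$ of $K$ satisfies $L'\sim L\sim I^{un}$, so computing $\pd(R/L')$ only recovers $\pd(R/I^{un})$ and says nothing about $\pd(R/L)$, which is what controls $\pd(R/I)$. So your central strategy cannot work as stated in the case $K=L$.

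Beyond that, everything that makes the lemma nontrivial is deferred: you never write down $L'$, never verify that your proposed cubics form a regular sequence in all subcases, and never carry out the inspection of $\pd(R/(L':z))$ and $\pd(R/(L'+(z)))$ --- you explicitly acknowledge that ``the bulk of the argument'' lies elsewhere. For comparison, the paper proceeds quite differently: it first computes $H+(z)=(x,y^2,z)\cap(x^2,y,z,a)$ and shows $(H+(z)):q=(x,y^2,z):q$, an ideal of projective dimension $3$; for $K=L$ it then bounds $\pd(R/K)\leq 3$ directly by applying Lemma~\ref{ses} to the two short exact sequences built from multiplication by $q$ on $R/(H+(z))$ and from $K=H\cap(z,q)$; for $K=I^{un}$ it splits on whether $l_3\in(x,y,z)$, in one subcase exhibiting an explicit link of $I^{un}$ with projective dimension $3$, and in the other showing $I^{un}=(qy^2,c)+zH$ with $c=qx+l_3yz$, so that two of the three generators of $I$ can be taken to be multiples of a common form and Corollary~\ref{x} applies. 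Your outline has no analogue of either the exact-sequence argument for $K=L$ or the Corollary~\ref{x} argument for $K=I^{un}$, and without them (or completed replacements) the proof is not there.
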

\begin{proof} 
If ${\rm ht}(x,y,z,a,l_1,l_2,l_3)\leq 5$, the statement follows by Proposition \ref{<=t}. We may then assume $\h(x,y,z,a,l_1,l_2,l_3)\geq 6$. 
Since $\h(x,y,z,a)=4$, it follows that $H+(z)=(x,y)^2+(ax,z)=(x,y^2,z)\cap (x^2,y,z,a)$, so $\pd(R/(H+(z)))=4$. We claim that $L':=(H+(z)):q = (x,y^2,z):q$.
Observe that $L'=((x,^2,z):q) \cap ((x^2,y,z,a):q)$. We first show that $(x,y^2,z):q$ is a proper ideal. If not, then $l_2y+l_3a \in (x,y^2,z)$. Then $l_3a\in (x,y,z)$ and $l_2y\in (x,y^2,z,a)$. Since $\h(x,y,z,a)=4$ we obtain $l_2,l_3\in (x,y,z,a)$ and thus $\h(x,y,z,a,l_1,l_2,l_3)\leq 5$, which is a contradiction.

Now, since $l_2y+l_3a\in (x^2,y,z,a)$, we have  $ (x^2,y,z,a):q = (x^2,y,z,a):l_1x\supseteq (x,y,z,a)$; since $(x,y^2,z):q$ is a proper ideal, then it is contained in $(x^2,y,z,a):q$. This fact and the above decomposition of $L'$ prove the claim.
Since $L'=(x,y^2,z):q$ is either $(x,y,z)$ or $(x,y^2,z)$, then $\pd(R/L')=3$.

Now, if $K=L\sim I^{un}$, we have the short exact sequence
\[ 
0 \longrightarrow R/L'\stackrel{\cdot q}{\longrightarrow} R/(H+(z)) \longrightarrow R/(H+(z,q))\longrightarrow 0
\]
By the above,  $\pd(R/(H+(z)))=4$ and $\pd(R/L')=3$, so Lemma \ref{ses}(1) implies $\pd(R/(H+(z,q)))\leq 4$, and then Lemma \ref{ses}(2) applied to 
\[ 
0 \longrightarrow R/K \longrightarrow R/H \oplus R/(z,q) \longrightarrow R/(H+(z,q))\longrightarrow 0
\]
yields $\pd(R/K)\leq 3$. Then $\pd(R/I)\leq 4$ by Theorem \ref{pd+1}(i).

We may then assume $K=I^{un}$. 
When $l_3\in (x,y,z)$, we have $K=(z,l_1x+l_2y)\cap ((x,y)^2+(ax+yz))$ and $\h(x,y,z,a,l_1,l_2)=6$, so one checks that $\pd(R/L'')=3$, for the ideal $L''=(l_1x^2+l_2xy,zy^2):K$ linked to $K=I^{un}$. Then $\pd(R/I)\leq 4$ by Theorem \ref{pd+1}(i). 

We may then assume $l_3\notin (x,y,z)$. By Remark \ref{observ}(b), $I^{un} = H \cap (z, qL')=H\cap (z,q(x,y^2))$. Since $qy^2\in H$, we have that $(z,qx)=(z,c)$, where $c:=qx+l_3yz\in H$. 
Since $H\cap (z)=zH$,  
\[ I^{un}=(qy^2, c)+H\cap (z)=(qy^2, c)+zH.\]
Since $\deg(qy^2)=4$, we have the containment $I\subseteq (c)+zH$ and so, after possibly taking linear combinations of the generators of $I$, at least two generators of $I$ are multiple of $x$. Then $\pd(R/I)\leq 4$ by Corollary \ref{x}.

\end{proof}

Next, we consider the case where $H=(x,y)^2+(ax+by)$, where $a$ and $b$ are forms of degree at least 2.
\begin{prop}\label{deg2}
Proposition~\ref{inters} holds true when $\deg(a)=\deg(b)\geq 2$. 
\end{prop}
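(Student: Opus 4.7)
The plan is to show $\pd(R/K) \le 4$; then $\pd(R/I) \le 5$ will follow from Theorem~\ref{pd+1}(i) (with Theorem~\ref{linkage}(c) when $K = I^{un}$, so that $\pd(R/L) = \pd(R/I^{un}) = \pd(R/K)$). First, I would observe that $z \notin (x,y)$---else $z^2 \in (x,y)^2 \cap (z,q) \subseteq K$, contradicting the no-quadric hypothesis---so $\h(x, y, z) = 3$ and $z$ is a non-zero-divisor on the $(x, y)$-primary ring $R/H$ (by \cite[Proposition~11]{En1}). Applying Lemma~\ref{ses}(1) with $h = z$ gives $K : z = H$, so $\pd(R/(K:z)) = 3$, and it remains to bound $\pd(R/(K + (z))) \le 4$. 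By Remark~\ref{observ}(f) and modularity,
\[
K + (z) = (z, q) \cap (H + (z)) = (z) + q \cdot \bigl((H + (z)) : q\bigr),
\]
and the analysis reduces (modulo $z$) to computing $\bar q \cdot (\bar H : \bar q)$ in $R' = R/(z)$.

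In Case A ($q \notin (x, y, z)$), $q$ is a non-zero-divisor on the $(x, y, z)$-primary ring $R/(H + (z))$, so $(H + (z)) : q = H + (z)$ and $K + (z) = (z) + qH$. A standard SES argument using $\pd_{R'}(\bar H/\bar q \bar H) \le \pd_{R'}(\bar H) + 1 = 3$ and the short exact sequence $0 \to \bar H/\bar q \bar H \to R'/\bar q \bar H \to R'/\bar H \to 0$ yields $\pd_{R'}(R'/\bar q \bar H) \le 3$, so $\pd(R/(K+(z))) \le 4$.

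In Case B ($q \in (x, y, z)$), replacing $q$ by $q - nz$ I may assume $q = \ell x + m y \in (x, y)$ with $\ell, m$ linear forms. A direct computation in $R'/(x, y)^2$ shows that for any $r$ with $r \bar q \in \bar H$, working in the domain $R/(x, y, z)$ (where tildes denote reduction modulo $(x, y, z)$) yields $\tilde r \,(\tilde \ell \tilde b - \tilde m \tilde a) = 0$. In the generic sub-case $\tilde \ell \tilde b \ne \tilde m \tilde a$, this forces $\tilde r = 0$, giving $\bar H : \bar q = (x, y)$ and $K + (z) = (z, qx, qy)$; the Koszul-like resolution of $\bar q(x, y)$ (with only syzygy $(y, -x)$) yields $\pd(R/(K + (z))) = 3$ and $\pd(R/K) \le 3$. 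In the degenerate sub-case $\tilde \ell \tilde b = \tilde m \tilde a$, UFD factorization in $R/(x, y, z)$---with $\tilde \ell, \tilde m$ necessarily coprime, since $k$-linear dependence of $\tilde a, \tilde b$ is excluded by the height condition $\h(x, y, a, b) = 4$---gives $\tilde a = \tilde \ell \tilde c$, $\tilde b = \tilde m \tilde c$ for some form $\tilde c$ of degree $\deg(a) - 1 \ge 1$, and lifting shows $\bar H : \bar q = (x, y, \bar c)$, where $\bar c \notin (x, y)$ in $R'$ (else $\tilde a = 0$, a contradiction). Consequently $K + (z) = (z, qx, qy, cq)$, and since $\bar q$ is a non-zero-divisor on $R'$, the $R'$-module $\bar q \cdot (x, y, \bar c)$ is isomorphic to $(x, y, \bar c)$, which has projective dimension $2$ via the Koszul complex on the regular sequence $x, y, \bar c$; thus $\pd_{R'}(R'/\bar q (x, y, \bar c)) \le 3$ and $\pd(R/K) \le 4$.

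The hard part will be the degenerate sub-case of Case B. Verifying the identification $\bar H : \bar q = (x, y, \bar c)$ requires carefully tracking the $z$-correction terms introduced when lifting the factorization $\tilde a = \tilde \ell \tilde c$, $\tilde b = \tilde m \tilde c$ from $R/(x, y, z)$ back to $R$, and confirming that $x, y, \bar c$ is indeed a regular sequence in $R'$ (which follows from $\bar c \notin (x, y)$, itself a consequence of the height hypothesis $\h(x, y, a, b) = 4$). These computations form the technical heart of the argument and parallel (in spirit) the case-by-case analyses used in Lemma~\ref{lin2}.
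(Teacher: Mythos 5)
The decisive gap is in your opening reduction. You plan to prove $\pd(R/K)\le 4$ and deduce $\pd(R/I)\le 5$, invoking Theorem~\ref{linkage}(c) to get $\pd(R/L)=\pd(R/I^{un})$ when $K=I^{un}$. But Theorem~\ref{linkage}(c) concerns \emph{doubly} linked ideals $J\stackrel{C}{\sim}H\stackrel{D}{\sim}L$, whereas $I^{un}$ and $L=(f_1,f_2):f_3$ are directly (oddly) linked, and a single link does not preserve projective dimension: by linkage duality the intermediate local cohomology modules of $R/I^{un}$ and $R/L$ are reflected, so a small $\pd(R/I^{un})$ is compatible with a large $\pd(R/L)$. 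Hence a bound on $\pd(R/I^{un})$ yields no bound on $\pd(R/L)$, and none on $\pd(R/I)$. This is exactly why the paper's proof bifurcates: the $K=L$ half runs essentially your argument (colon and sum with $z$, then $L+(z)=(z)+q\big((H+(z)):q\big)$), while the $K=I^{un}$ half is forced onto different tools --- pinning down $[I^{un}]_3$ so that two generators of $I$ lie in a principal ideal (Corollary~\ref{x}), showing the generators involve few variables (Remark~\ref{observ}(c), Proposition~\ref{<=t}), or computing an explicit link $L'=C:I^{un}$ and bounding $\pd(R/L')$, which does transfer to $L$ precisely because $L'$ and $L$ \emph{are} doubly linked. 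Your proposal contains none of this, so the $K=I^{un}$ case is not proved.

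Within the $K=L$ half there are also two local problems. In Case A you treat $R/(H+(z))$ as $(x,y,z)$-primary to conclude $(H+(z)):q=H+(z)$ when $q\notin(x,y,z)$; but $H+(z)$ typically has an embedded prime of height $4$ (compare $H+(z)=(x,y^2,z)\cap(x^2,y,z,a)$ in Lemma~\ref{l_i}), and $q$ may lie in it. The paper avoids this by showing, via \cite[Lemma~2.6]{HMMS4}, that $(H+(z)):q$ is unmixed of height $3$ or $4$, equal to $(x,y,z)$ or $(x,y,z,F)$, and in either case of projective dimension at most $4$ --- which suffices. In the degenerate subcase of Case B, coprimality of $\tilde\ell,\tilde m$ in $R/(x,y,z)$ does not follow from $\h(x,y,a,b)=4$: that hypothesis controls $(x,y,a,b)$, not $(x,y,z,a,b)$, and one can have $\tilde m=\alpha\tilde\ell$ (or $\tilde\ell=0$) with $\tilde a,\tilde b$ proportional modulo $(x,y,z)$ while $\h(x,y,a,b)=4$ still holds; your factorization $\tilde a=\tilde\ell\tilde c$, $\tilde b=\tilde m\tilde c$ is then unavailable. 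These are repairable along the paper's lines, but the missing treatment of $K=I^{un}$ is not.
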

\begin{proof} 
Let $h=ax+by$. Since we may assume that $K$ is generated in degree three and higher, we have in particular that ${\rm ht}(x,y,z)=3$ and $q\notin H+(z)$.

Assume $K=L\sim I^{un}$. First, we note that $L:z= H$ so $\pd(R/(L:z))=3$. By Lemma \ref{ses}(1) it then suffices to prove $\pd(R/L+(z))\leq 5$. Set $L'=L+(z)$; by Remark \ref{observ}(f), 
$$
L':= L+(z)= (z,q)+(H+(z))=(z) + (q) \cap (H+(z))= (z) + q((H+(z)):q)
$$
Then $L'+(q)=(z,q)$ so $\pd(R/L'+(q))=2$. On the other hand, using Remark \ref{observ}(e) one observes that $L':q = ((H+(z)):q) + (z)$. Since $z\in (H+(z)):q$, then $L':q = (H+(z)):q$, which is a proper ideal because $q\notin H+(z)$; if $\h(L':q)=3$, since $(x,y,z)\subseteq L':q$, then $L':q=(x,y,z)$ and so $\pd(R/L':q)=3$. If $\h(L':q)>3$, we claim that $L':q=(x,y,z,F)$ for some form $F\notin (x,y,z)$. Indeed, since $z$ is regular on $R/H$, then $\pd(R/H+(z))= 4$, so for instance by \cite[Lemma~2.6]{HMMS4} we deduce that the largest height of an associated prime of $H+(z)$ is at most 4, and thus the same is true for $L':q$, proving that $L':q$ is unmixed of height 4. Since $(x,y,z)\subseteq L':q$, then $L':q/(x,y,z)$ is an unmixed ideal of height 1 in the polynomial ring $R/(x,y,z)$, and therefore is principal. This proves $L':q=(x,y,z,F)$ for some $F\in R$. In this case $\pd(R/L':q))=4$.

In either case we have $\pd(R/L':q)\leq 4$ and then by Lemma \ref{ses}(1) one obtains $\pd(R/L')\leq 4$, which concludes the proof when $K=L\sim I^{un}$. For the rest of the proof, we then assume $K=I^{un}$. 

If $\deg(h)=3$ and $h\notin H':=(x,y)^2 +(z,q)$, or if $\deg(h)\geq 4$, then $[K]_3= \big[(x,y)^2\cap (z,q)\big]_3$.  When $[K]_3=\big[(x,y)^2\cap (z,q)\big]_3$, we must have ${\rm ht}(x,y,z,q)\leq 3$, since otherwise $\big[(x,y)^2\cap (z,q)\big]_3=z(x,y)^2 \subseteq (z)$, which contradicts Remark \ref{observ}(a). 
Also, since ${\rm ht}(x,y,z)=3$ it follows that $q\in (x,y,z)$. Clearing terms in $z$, we may assume $q=l_1x+l_2y$ for linear forms $l_1,l_2$. Then all cubics in $K$ can be written in terms of $x,y,z,l_1,l_2$ and Remark \ref{observ}(c) proves the statement.

We may then assume $\deg(h)=3$ and  $h\in (x,y)^2 +(z,q)$. By Lemma \ref{cubic}(ii), after possibly modifying $a$ and $b$ modulo $x$ and $y$ we can actually assume $h\in (z,q)$, so $K=(h)+\left((x,y)^2\cap (z,q)\right)$.\\

\noindent \underline{Case 1: Assume $\h(x,y,z,q)=4$.}\\
 Then $K=(h)+(x,y)^2(z,q)$ and so $[K]_3=(h)+ z(x,y)^2$; then, after possibly taking linear combinations, we may assume two of the three generators of $I$ are multiple of $z$. The statement now follows by Corollary \ref{x}.\\

\noindent \underline{Case 2: Assume ${\rm ht}(x,y,z,q)=3$.}\\
 This implies that $q\in (x,y,z)$. After reducing $q$ modulo $z$ we may assume $q=l_1x+l_2y\in(x,y)$.  Recall that if $\h(x,y,z,l_1,l_2) = 3$, then $K$ contains a quadric.  Hence we may assume $\h(x,y,z,l_1)=4$. Then by Remark \ref{observ}(b) we have
\[K=(h) + q(x,y) + z(x,y)^2\]

Since $h=ax+by\in (z,q)$ and $q\in (x,y)$ and $\h(x,y,z)=3$, we have $h\in z(x,y)+(q)$, and we can write $h=c_1xz+c_2yz+c_3q$ for linear forms $c_1,c_2,c_3$. Moreover, since $q(x,y)+z(x,y)^2\subseteq K$, for every $i=1,2,3$ we may assume either $c_i=0$ or else $c_i\notin (x,y)$. Then 
$$K=((c_1x+c_2y)z+c_3q) + q(x,y) + z(x,y)^2,$$
with $q=l_1x+l_2y$ and $\h(x,y,z,l_1,l_2)\geq 4$, and if $c_i\neq 0$ then $\h(x,y,c_i)=3$.\\

\noindent \underline{Case 2a: Assume ${\rm ht}(x,y,z,l_1,l_2)=5$.}\\ 
The height condition implies that $C=(y^2z,x(l_1x+l_2y))$ is a complete intersection and $(z,q)=(z,l_1x+l_2y)$ is a prime ideal.
Then the associated primes of $C$ are $\p_1=(x,y)$, $\p_2=(y,l_1)$, $\p_3=(z,x)$, and $\p_4=(z,q)$. Set $L=C:K$. Also, one can check that $L_{\p_1}=(C:K)_{\p_1}=(y^2,q)_{\p_1}$, $L_{\p_2}=(C:K)_{\p_2}=(y^2,l_1)_{\p_2}$, $L_{\p_3}=(C:K)_{\p_3}=(z,x)_{\p_3}$, and $L_{\p_4}=(C:K)_{\p_4}=R_{\p_4}$; in particular, ${\rm Ass}(R/L)=\{\p_1,\p_2,\p_3\}$. 
Also, $L$ contains $L'=(y^2z, l_1xy, l_1x^2+l_2xy, c_3l_1^2x+c_1l_1xz-c_2l_1yz+c_1l_2yz)$.
One can see that $L'$ is unmixed if and only if ${\rm ht}(x,y,I_2(M))=4$, where $M=\left(\begin{array}{ccc}
c_3& c_2 & c_1\\
-z & l_2 & l_1 \end{array}\right)$, which is equivalent to $K$ being unmixed. Because $K$ is unmixed, the ideal $L'$ is unmixed and $\pd(R/L')=3$. It is easily checked that ${\rm Ass}(R/L')={\rm Ass}(R/L)=\{\p_1,\p_2,\p_3\}$ and $L_{\p_i}=L'_{\p_i}$ for all $i=1,2,3$. Therefore, $L=L'$, thus $\pd(R/L)=3$, and then $\pd(R/I)\leq 3+1=4$ by Theorem \ref{pd+1}(i).\\

\noindent \underline{Case 2b: Assume ${\rm ht}(x,y,z,l_1,l_2)=4$.}\\
We may assume $l_2\in (x,y)$. After possibly a linear change of coordinates we may further assume 
 that either  $q=l_1x$ or $q=l_1x+y^2.$

Also, since ${\rm ht}(x,y,z,l_1)=4$, the ideal $C=(xq,y^2z)$ is a complete intersection in $K=I^{un}$. 
Let $L_1=C:K$. By Theorem \ref{linkage}(b), $e(R/L)=5$ and $L_1\supseteq L'=(x^2l_1,xyl_1, xy^2,y^2z, l_1h)$, which is also an ideal of multiplicity $5$. Looking at the generic resolution of $R/L'$ one sees that $L'$ is unmixed if and only if ${\rm ht}(x,y, c_2l_1, c_2z, c_1z+c_3l_1)\geq 4$ if and only if (since ${\rm ht}(x,y,z,l_1)=4$) ${\rm ht}(x,y,c_2)=3$, in which case we have $L_1=L'$, and then $\pd(R/L_1)=3$, which yields $\pd(R/I)\leq 3+1=4$ by Theorem \ref{pd+1}(i).

If, instead, ${\rm ht}(x,y,c_2l_1, c_2z, c_1z+c_3l_1)\leq 3$, then $c_2\in (x,y)$ and, by the above, we may assume $c_2=0$. 
Now, if $\h(x,y,z,l_1,c_1,c_3)\leq 5$, the statement follows by Proposition \ref{<=t}. Assume then $\h(x,y,z,l_1,c_1,c_3)=6$.
Take $C=(xc_1z+c_3q, zy^2)$, the height assumption implies that $C$ is a complete intersection, and observe that $L'=(c_1z+c_3l_1, y^2z, y^2c_3)\subseteq L_1=C:K$. Since $L'=(c_1z+c_3l_1, y^2c_3): (c_1,c_3)$, then $L'$ is directly linked to a complete intersection, so by Theorem \ref{linkage}(a) and (b) the ideal  $L'$ is Cohen-Macaulay and $e(R/L')=6-1=5=e(R/L_1)$. Then $L'=L_1$, and so $\pd(R/I)\leq 3$ by Theorem \ref{pd+1}(i).

\end{proof}

\bigskip

\subsection{Type $\boldsymbol{\langle 1,1;2,2\rangle}$}

In this subsection we prove the following result: 
\begin{prop}\label{11;22}
Let $K=K_1\cap K_2$ where $K_1$ and $K_2$ are primary to two distinct linear primes and $e(R/K_i)=2$ for $i=1,2$. If either $K= I^{un}$ or $K=L$, then $\pd(R/I)\leq 5$.
\end{prop}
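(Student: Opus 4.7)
The first reduction is to assume $K$ contains no quadric. When $K=I^{un}$ this is immediate from Theorem~\ref{pd+1}(iii); when $K=L$, linkage (Theorem~\ref{linkage}(b)) through the complete intersection of two cubics gives $e(R/I)=9-e(R/K)=5$, so Theorem~\ref{pd+1}(iii) applies as well. Thus I may suppose $[K]_2=0$. By the structure theorem for multiplicity-two ideals primary to a linear prime (\cite[Proposition~11]{En1}, as already used in the proofs of Propositions~\ref{e=3} and \ref{inters}), each $K_i$ is either $(x_i,y_i^2)$ or $(x_i,y_i)^2+(a_ix_i+b_iy_i)$ with $\h(x_i,y_i,a_i,b_i)=4$. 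If both $K_i$ are of the first form, then $x_1x_2\in K_1K_2\subseteq K$ is a quadric, contradicting $[K]_2=0$. If $K_1=(x_1,y_1^2)$ but $K_2=(x_2,y_2)^2+(a_2x_2+b_2y_2)$, I would check that $x_1\in(x_2,y_2)$ produces $x_1\cdot x_2\in K$, a quadric; combined with the symmetric argument for $y_1^2$, this would force enough vanishing to either produce a quadric in $K$ or to write two of the generators of $I$ as multiples of a common form, letting Corollary~\ref{x} close the case.

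The remaining (generic) situation is that $K_i=(x_i,y_i)^2+(h_i)$ with $h_i=a_ix_i+b_iy_i$ and $\h(x_i,y_i,a_i,b_i)=4$ for $i=1,2$. Here I would first split further by whether each cubic $h_i$ lies in $[(x_i,y_i)^2+K_j]_3$ or not ($j\neq i$). If neither does, Lemma~\ref{cubic}(i) shows that every cubic in $K$ already lies in $(x_1,y_1)^2\cap(x_2,y_2)^2$, an ideal extended from $k[x_1,y_1,x_2,y_2]$ provided the four linear forms are independent; Proposition~\ref{<=t} then gives $\pd(R/I)\leq 5$, and Remark~\ref{observ}(c) handles smaller heights. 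If one of the containments does hold, Lemma~\ref{cubic}(ii) lets me replace $a_i,b_i$ by representatives inside $(x_j,y_j)^2+(h_j)$, producing an explicit generating set of $K$. With this set I would build a natural complete intersection $C\subseteq K$ of two cubics — the candidates are $C=(x_1^2x_2^2,y_1^2y_2^2)$ at the generic heights, or $C=(h_1,h_2)$ when both $h_i$ have degree $3$ and form a regular sequence — and compute $L'=C\colon K$. The goal is to show $L'$ is directly linked to a complete intersection, so that Theorem~\ref{linkage}(a,b) makes $L'$ Cohen--Macaulay of the multiplicity predicted by $e(R/C)-e(R/K)$, hence $\pd(R/L')\leq 3$, which gives $\pd(R/I)\leq 4$ via Theorem~\ref{pd+1}(i).

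The main obstacle will be the intermediate height configurations: when $(x_1,y_1)$ and $(x_2,y_2)$ share one linear form, or when one of the coefficient forms $a_i,b_i$ lies in the span of the $x_j,y_j$, the candidate complete intersection $C$ degenerates and $L'$ acquires spurious embedded or minimal components, so one cannot immediately read off $\pd(R/L')$. In those subcases I would mimic the strategy of Proposition~\ref{inters} (specifically Lemmas~\ref{lin2} and \ref{l_i} and Proposition~\ref{deg2}): slice by a well-chosen form $g$ via Lemma~\ref{ses}(1), estimating $\pd(R/(K:g))$ and $\pd(R/(K+(g)))$ separately — typically $K:g$ collapses to one of the factors $K_i$ (so its $\pd$ is at most $4$ by the structure theorem), while $K+(g)$ lives in a polynomial subring in few variables and is controlled by Proposition~\ref{<=t} or a direct resolution. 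Because $K$ has an extra symmetry between $K_1$ and $K_2$ compared with the $\langle 1,2;2,1\rangle$ situation, the combinatorial bookkeeping — rather than any single linkage or depth computation — is what I expect to be the real difficulty.
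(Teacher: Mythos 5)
Your opening reductions are correct and agree with the paper: the multiplicity count $e(R/I)=9-e(R/L)=5$ does make Theorem~\ref{pd+1}(iii) available when $K=L$, and if both $K_i$ have the form $(x_i,y_i^2)$ then $x_1x_2\in K$ is a quadric. But the mixed case is a genuine gap. If $K_1=(x_1,y_1^2)$ and $K_2=(x_2,y_2)^2+(a_2x_2+b_2y_2)$ with all heights generic, $K$ contains no quadric and there is no visible common factor among the generators of $I$, so the "force enough vanishing" sketch does not close anything. This case is exactly the hypothesis of Proposition~\ref{inters} with $z=x_1$ and $q=y_1^2$ (a linear form and a quadric generating a height-two ideal), which is how the paper disposes of it in one line; you neither invoke that proposition here nor supply a workable substitute, and reproducing its content from scratch would cost you the entirety of Lemmas~\ref{lin2}, \ref{l_i} and Proposition~\ref{deg2}.

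The main case (both $K_i=(x_i,y_i)^2+(h_i)$) is where your route diverges from the paper's and where it breaks. The paper's engine is Lemma~\ref{1122lem}: since $[K]_2=0$ forces $\h(x,y,u,v)=4$, the ideal $(x,y)^2\cap(u,v)^2=(x,y)^2(u,v)^2$ contains no cubics, so at least one $h_i$ must contribute cubics to $K$; this single observation forces $\h(x,y,u,v,a,b)\le 5$ and $\h(x,y,u,v,c,d)\le 5$ (and $\le 4$ with $\deg a=\deg b=2$ in the higher-degree case), after which every branch of Lemmas~\ref{1122deg1} and \ref{1122deg2} shows $K$ is extended from a polynomial ring in at most five variables and concludes by Proposition~\ref{<=t} --- no link $C:K$ is ever computed in this subsection. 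Your plan misses this: your "neither containment holds" branch is actually vacuous (it would give $[K]_3=0$, contradicting Remark~\ref{observ}(a)), so you never extract the height bounds that do the real work; and your "one containment holds" branch rests on the hope that $L'=C:K$ is linked to a complete intersection, hence Cohen--Macaulay. That hope fails on two counts: the candidate $C=(h_1,h_2)$ is generally not contained in $K$ at all (since $h_1\in K_1$ but $h_1\notin K_2$), so $C:K$ is not a link; and even for a legitimate $C$, the ideal $K=K_1\cap K_2$ with each $K_i$ of projective dimension $3$ is not Cohen--Macaulay, so no single link of it can be Cohen--Macaulay either, and $\pd(R/L')\le 3$ cannot hold.
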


We first observe that in most cases the statement follows by our previous work.
\begin{lem}
Proposition~\ref{11;22} holds if either $K_1$ or $K_2$ contains a linear form, or if $K$ contains a quadric. 
\end{lem}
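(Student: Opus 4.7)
The plan is to handle the three hypotheses in turn, systematically reducing the first two to the third.

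First, suppose $K$ contains a quadric. When $K=I^{un}$, then $I^{un}$ contains a quadric and $e(R/I)=e(R/K)=4$, so Theorem~\ref{pd+1}(iii) gives $\pd(R/I)\leq 4$. When $K=L$, Theorem~\ref{linkage}(b) applied to the complete intersection $C=(f_1,f_2)$ with $e(R/C)=9$ gives $e(R/I^{un})=9-e(R/L)=5$, so $e(R/I)=5$; since $L$ contains a quadric, Theorem~\ref{pd+1}(iii) again yields $\pd(R/I)\leq 4$.

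Next, suppose without loss of generality that $K_1$ contains a linear form $\ell$. Since $K_1$ is $\p_1$-primary with $e(R/K_1)=2$ and $\p_1=(\ell,m)$ is a height-two linear prime, an analysis of $(K_1)_{\p_1}$ in the two-dimensional regular local ring $R_{\p_1}$ combined with $\ell\in K_1$ forces $K_1=(\ell,q_1)$ for a quadric $q_1=m^2+\ell h$ (for some linear form $h$). In particular $K_1$ is a complete intersection. The goal is then to exhibit a quadric in $K=K_1\cap K_2$, reducing to the first case.

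If $K_2$ also contains a linear form $\ell'$, then $\ell\ell'$ is a quadric lying in both $K_1$ and $K_2$, hence in $K$. Otherwise, by \cite[Prop.~11]{En1}, $K_2=(x,y)^2+(ax+by)$ with $\p_2=(x,y)$ and $\h(x,y,a,b)=4$. If $\p_1$ and $\p_2$ share a linear form, then after a change of coordinates we may assume either $\ell\in(x,y)$ or $m\in(x,y)$: in the former case, say $\ell=x$, we have $xy\in K_1$ (as $x=\ell\in K_1$) and $xy\in(x,y)^2\subseteq K_2$, so $xy\in K$; in the latter, say $m=x$, then $x^2=q_1-\ell h\in K_1$ also lies in $(x,y)^2\subseteq K_2$. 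Either subcase reduces to the first case.

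The main obstacle is the remaining subcase where $\p_1$ and $\p_2$ share no linear form, i.e., $\h(\ell,m,x,y)=4$, because there $K$ may genuinely fail to contain a quadric. In this situation the plan is to bound $\pd(R/K)$ directly using Lemma~\ref{ses}(1) applied to $\ell$: since $\ell\notin\p_2$ we have $K:\ell=K_2:\ell=K_2$ with $\pd(R/K_2)\leq 3$ by \cite[Prop.~11]{En1}, and $K+(\ell)=K_1\cap(K_2+(\ell))$ by Remark~\ref{observ}(f), whose projective dimension is controlled by passing to $R/(\ell)$, where $K_1$ becomes principally generated by the image of $q_1$ and $K_2$ is unchanged. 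This produces the bound $\pd(R/K)\leq 4$, and hence $\pd(R/I)\leq 5$: via Theorem~\ref{pd+1}(i) when $K=L$, and when $K=I^{un}$ via a parallel linkage analysis of $(f_1,f_2):K$.
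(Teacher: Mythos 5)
Your first case and your reductions to it are sound and consistent with the paper: when $K$ contains a quadric, Theorem~\ref{pd+1}(iii) applies (with the check $e(R/I)\le 5$ when $K=L$), and when both $K_i$ contain linear forms, or when $\p_1$ and $\p_2$ share a linear form, you correctly exhibit a quadric in $K$. The paper's own proof is organized the same way up to this point: by \cite[Proposition~11]{En1} a multiplicity-two primary ideal containing a linear form is $(u,v^2)$ after a change of coordinates, and the only configuration in which no quadric need exist is $K=(u,v^2)\cap\big[(x,y)^2+(ax+by)\big]$ with the two linear primes spanning a height-four ideal.

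The gap is in your treatment of that remaining subcase. The paper does not attempt a short direct argument there; it invokes Proposition~\ref{inters}, which is stated exactly for $K=H\cap(z,q)$ with $z$ a linear form and $q$ a quadric (here $z=u$, $q=v^2$) and whose proof occupies Lemma~\ref{lin2} through Proposition~\ref{deg2}. Your substitute argument has two problems. First, the claim $\pd(R/(K+(\ell)))\le 4$ is not justified: in $R/(\ell)$ one has $\overline{K+(\ell)}=(\bar m^2)\cap \overline{K_2}=\bar m^2\big(\overline{K_2}:\bar m^2\big)$, and when $\h(x,y,a,b,\ell)\le 4$ the image $\overline{K_2}$ need not remain primary, so the colon ideal $\overline{K_2}:\bar m^2$ (and hence its projective dimension) requires a case analysis you have not carried out. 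Second, and more seriously, when $K=I^{un}$ a bound on $\pd(R/K)$ does not bound $\pd(R/I)$: the only bridge is $\pd(R/I)\le\pd(R/L)+1$ for the link $L=(f_1,f_2):I^{un}$, and a single link does not preserve projective dimension (Theorem~\ref{linkage}(c) applies only to double links). The ``parallel linkage analysis'' you defer to is not parallel at all --- here $e(R/L)=5$, and controlling $\pd(R/L)$ requires either constructing explicit complete intersections in $I^{un}$ and computing the resulting links, or showing that $I$ itself has special structure (Corollary~\ref{x}, Proposition~\ref{<=t}, Remark~\ref{observ}(c)). That is precisely the content of Proposition~\ref{inters}, and your sketch does not replace it.
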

\begin{proof} If $K$ contains a quadric the statement follows by Theorem \ref{pd+1}(iii). Assume then $K_2$ contains a linear form. Then, by \cite[Proposition~11]{En1}, we can write $K_2=(u,v^2)$ for some $u,v \in R_1$, and there are $x,y\in R_1$ such that either $K_1=(x,y^2)$, or $K_1=(x,y)^2+(ax+by)$ for forms $a,b$ with ${\rm ht}(a,b,x,y)=4$. In the former case, $K$ contains a quadric, hence $\pd(R/I)\leq 4$ by Theorem~\ref{pd+1}(iii). In the latter case the statement follows by Proposition~\ref{inters}. 
\end{proof}

We may then assume $K$ contains no quadrics. Moreover, by \cite[Proposition~11]{En1}, if $K_2$ does not contain a linear form, there are forms $c,d$ such that $K_2=(u,v)^2 +(cu+dv)$. Hence, we need to prove the following.
\begin{prop}\label{1122}
Proposition~\ref{11;22} holds when $K$ contains no quadrics and $K_1=(x,y)^2+(ax+by)$ and $K_2=(u,v)^2+(cu+dv)$, where 
${\rm ht}(x,y,a,b)={\rm ht}(u,v,c,d)=4$.
\end{prop}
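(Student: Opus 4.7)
My plan is to bound $\pd(R/I) \le 5$ separately in the two cases $K = I^{un}$ and $K = L$, following the two-track strategy that proved successful in Propositions~\ref{inters} and~\ref{deg2}. When $K = L$, Theorem~\ref{pd+1}(i) reduces everything to showing $\pd(R/K) \le 4$. When $K = I^{un}$, a single link does not transfer the projective dimension cleanly through $L = (f_1,f_2):I$, so I expect to work directly with the cubic generators of $I$ and bound $\pd(R/L) \le 4$ by a linkage computation.

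For the case $K = L$, my first step is to apply Lemma~\ref{ses}(2) to the decomposition $K = K_1 \cap K_2$. Since $\pd(R/K_i) \le 3$ for $i = 1,2$ by \cite[Proposition~11]{En1}, the task reduces to showing $\pd(R/(K_1+K_2)) \le 5$, where
\[ K_1 + K_2 = (x,y)^2 + (u,v)^2 + (ax+by) + (cu+dv). \]
This ideal has radical $(x,y) + (u,v)$ and splits naturally by $\h(x,y,u,v) \in \{3,4\}$. In the height-$4$ case, localization at $\mathfrak{m} = (x,y,u,v)$ shows that $(K_1+K_2)_\mathfrak{m}$ is a complete intersection of length $4$; in the generic configuration I expect $a,b,c,d$ to form a regular sequence on $R/(K_1+K_2)$, which would make $K_1+K_2$ Cohen-Macaulay of height $4$ and $\pd = 4$. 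In the height-$3$ case, a linear change of coordinates lets me assume $(u,v) = (x,z)$ with $\h(x,y,z) = 3$; reducing modulo $x$ brings $K_1 + K_2$ to an ideal of the form $(y^2,z^2,by,dz) = y(y,b) + z(z,d)$ whose structure is tractable, and Lemma~\ref{ses}(1) with $h = x$ lifts the bound. Subcases in which $a,b,c,d$ lie in the linear span of $x,y,u,v$ are dispatched directly by Proposition~\ref{<=t} or Remark~\ref{observ}(c).

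For the case $K = I^{un}$, I would pick a complete intersection $C = (c_1,c_2) \subseteq I$ of two cubics (for instance $c_1 = x^3$ and $c_2 = u^3$, which form a regular sequence exactly in the height-$4$ configuration), and compute $L = C:I = C:K$ using the primary decomposition $K = K_1 \cap K_2$ and the modularity identities of Remark~\ref{observ}(b),(e),(f). The goal is to identify $L$ as an unmixed ideal of the predicted multiplicity $e(R/C) - e(R/K) = 9 - 4 = 5$ which either contains a linear form or quadric (so that Theorem~\ref{pd+1}(ii)--(iii) applies), or else is extended from a polynomial subring in few variables (triggering Proposition~\ref{<=t}), giving $\pd(R/L) \le 4$ and hence $\pd(R/I) \le 5$ by Theorem~\ref{pd+1}(i). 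The hard part will be the configurations in which $K_1 + K_2$ fails to be Cohen-Macaulay---e.g.\ when $a,b,c,d$ have low degree and sit in special linear position with respect to $x,y,u,v$---since then both the Mayer--Vietoris bound and the explicit identification of $L$ become delicate; in those subcases I would fall back on the structure theorems of \cite{MM} for multiplicity-$2$ primary ideals together with the explicit linkage patterns developed in the proofs of Propositions~\ref{inters} and~\ref{deg2}.
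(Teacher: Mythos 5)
Your plan has a genuine gap in each of its two tracks. For $K=L$, the reduction via Lemma~\ref{ses}(2) needs $\pd(R/(K_1+K_2))\le 5$, but your claim that $K_1+K_2$ is Cohen--Macaulay of height $4$ in the generic configuration is false: when $x,y,u,v,a,b,c,d$ are algebraically independent linear forms, $R/(K_1+K_2)$ is a tensor product over $k$ of $R/K_1$-type and $R/K_2$-type factors, each of projective dimension $3$, so $\pd(R/(K_1+K_2))=6$, and Lemma~\ref{ses}(2) then yields only $\pd(R/K)\le 5$, hence $\pd(R/I)\le 6$. (Note also that $ax+by$ lies in the unique associated prime $(x,y,u,v)$ of $(x,y)^2+(u,v)^2$, so the two extra forms cannot be a regular sequence on that quotient.) The ingredient your proposal omits is exactly what rescues the statement: Remark~\ref{observ}(a) forces $[K]_3$ to contain a height-two ideal with at least two independent cubics, and the paper's Lemma~\ref{1122lem} converts this into the height constraints $\h(x,y,u,v,a,b)\le 5$ and $\h(x,y,u,v,c,d)\le 5$ (and $\h(x,y,u,v,a,b)\le 4$ with $\deg(a)=2$ when $\deg(a)\ge 2$). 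Those constraints are what ultimately place $K$ inside a polynomial subring in at most five variables, so that Proposition~\ref{<=t} applies; without them your ``generic configuration'' is not excluded and the bound fails.

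For $K=I^{un}$, the complete intersection you propose, $C=(x^3,u^3)$, is not contained in $K$ --- indeed $x^3\notin K_2=(u,v)^2+(cu+dv)$ when $\h(x,y,u,v)=4$ --- so $C:K$ is not a link of $K$ and the multiplicity count $e(R/L)=9-4=5$ has no content; any linkage here must use two cubics inside $K_1\cap K_2$. In fact the paper's proof uses no linkage at all for this type: after the height reductions of Lemma~\ref{1122lem} it rewrites $ax+by$ and $cu+dv$ (via Lemma~\ref{cubic} and the modularity law) so that $K_1$, $K_2$, and hence $K$ are extended from $k[x,y,u,v]$ or a five-variable subring, and concludes by Proposition~\ref{<=t} in both the $K=I^{un}$ and $K=L$ cases simultaneously. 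Your fallback to the structure theorems of \cite{MM} does not help, since those classify a single primary component (which you already have explicitly from \cite[Proposition~11]{En1}) and say nothing about the relative position of the two components, which is the entire difficulty here.
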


The proof follows by Propositions \ref{1122deg1} and \ref{1122deg2} below.

\begin{lem}\label{1122lem}
Assume $K$ is as in Proposition \ref{1122}. Then
\begin{enumerate}
\item At least one cubic of $K$ is not contained in $H=(x,y)^2 \cap K_2$;
\item One has $\h(x,y,u,v,a,b)\leq 5$ and $\h(x,y,u,v,c,d)\leq 5$;
\item If $\deg(a)=\deg(b)\geq 2$, then $\deg(a)=\deg(b)=2$ and $\h(x,y,u,v,a,b)\leq 4$.
\end{enumerate}
\end{lem}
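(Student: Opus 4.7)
The plan is to prove the three parts in order, with (1) feeding into (2) and (3). The central tool is the $R$-module isomorphism $K_1/(x,y)^2 \cong R/(x,y)$ generated by $\overline{ax+by}$, valid because $\h(x,y,a,b)=4$ forces $(x,y)^2 : (ax+by) = (x,y)$: in the basis $\{1, \bar x, \bar y\}$ of $R/(x,y)^2$ over $S = R/(x,y)$, the class of $\overline{ax+by}$ equals $\bar a \bar x + \bar b \bar y$, whose annihilator visibly contracts to $(x,y)$ since $\bar a, \bar b$ are non-zero in the domain $S$. Combined with $(x,y)^2 \subseteq K_1$, this yields an injection $K/H \hookrightarrow R/(x,y)$ of $R$-modules whose behavior in degree three drives the entire lemma. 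For (1), I would assume for contradiction $[K]_3 \subseteq [H]_3 \subseteq [(x,y)^2]_3$. In the case $K = I^{un}$, the three linearly independent cubic generators of $I$ then lie in $(x,y)^2$, forcing $I \subseteq (x,y)^2$; localizing at $(x,y)$ gives the contradiction that $I_{(x,y)} = (K_1)_{(x,y)}$ has colength $2$ over the two-dimensional regular local ring $R_{(x,y)}$, while $(x,y)^2 R_{(x,y)}$ has colength $3$. In the case $K = L$, Theorem~\ref{linkage}(d) guarantees $\dim_k[L]_3 = \dim_k[I^{un}]_3 \ge 3$, so $L$ has three linearly independent minimal cubic generators all in $(x,y)^2$; combining this with the Hilbert-function equality and a local multiplicity analysis at the minimal primes $(x,y)$ and $(u,v)$ extracts an analogous contradiction.

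For (2), assume $\h(x,y,u,v,a,b) = 6$ for contradiction (the analogous bound for $c,d$ follows by symmetry). I would show that every cubic of $K$ lies in $H$, contradicting (1). Take $f \in [K]_3$ and write $f = q + r(ax+by)$ with $q \in [(x,y)^2]_3$; passing to $R/(x,y)^2 \cong S \oplus S\bar x \oplus S\bar y$, the condition $f \in K_2$ becomes $r_0(\bar a \bar x + \bar b \bar y) \in K_2 \bmod (x,y)^2$, where $r_0 := r \bmod (x,y)$. Reading off the $\bar x$- and $\bar y$-components gives $r_0 \bar a,\, r_0 \bar b \in (u,v) S$; since $\h = 6$ forces $\bar a, \bar b \notin (u,v) S$ and $S/(u,v)$ is a domain, we obtain $r_0 \in (u,v)$, say $r_0 = \alpha u + \beta v$. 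A further degree-$2$ analysis of the $S$-ideals $(u,v)^2 + (c_x u + d_x v,\, c_0 u + d_0 v)$ and $(u,v)^2 + (c_y u + d_y v,\, c_0 u + d_0 v)$ (using the decomposition $c = c_0 + c_x x + c_y y + \cdots$ and similarly for $d$) then forces $\alpha = \beta = 0$, so $r \in (x,y)$ and $f \in H$. For (3), $\deg(a) \ge 3$ would force $\deg(ax+by) \ge 4$, hence $[K_1]_3 = [(x,y)^2]_3$ and $[K]_3 \subseteq H$, contradicting (1); so $\deg(a) = \deg(b) = 2$. By (2), $\h(x,y,u,v,a,b) \le 5$; assuming $\h = 5$, after a linear change of coordinates we may arrange $\bar b \in (u,v) S$ with $\bar a \notin (u,v) S$, and rerunning the (2)-argument with $r$ now a scalar (since $\deg(ax+by) = 3$) shows the $\bar x$-component condition $r \bar a \in (u,v) S$ forces $r = 0$, hence $f \in H$, contradicting (1).

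The main technical hurdle is the uniform degree-$2$ linear-algebra analysis in Part (2): one must verify $(\alpha, \beta) = (0,0)$ in every configuration of $c$ and $d$, not merely in generic ones, by carefully inspecting how the coefficients of $c,d$ in the $x$-, $y$-, and constant-parts interact with the quadric pieces of $(u,v)^2$. A close second is the $K = L$ subcase of (1), where the colength argument cannot be applied to $I$ directly (since $I \not\subseteq L$) and one must combine Theorem~\ref{linkage}(d) with a careful multiplicity comparison to complete the argument.
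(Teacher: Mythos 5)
Your reduction of part (1) to the claim $[K]_3\not\subseteq (x,y)^2$ is correct, and your colength argument at $(x,y)$ does settle the case $K=I^{un}$ (it is a legitimate alternative to what the paper does there). But the case $K=L$ is not proved: you only assert that a ``local multiplicity analysis \dots extracts an analogous contradiction,'' and no such argument is supplied. The obstruction is real. For $L$ one cannot pass from $[L]_3\subseteq(x,y)^2$ to $L_{(x,y)}\subseteq (x,y)^2R_{(x,y)}$, since $L$ may a priori have generators of degree at least $4$ lying outside the symbolic square of $(x,y)$; and the containment $[L]_3\subseteq (x,y)^2$ by itself contradicts nothing, because $(x,y)^2$ already has height two, so Remark~\ref{observ}(a) is not violated. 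The paper's proof avoids this by establishing something strictly stronger about $H=(x,y)^2\cap K_2$ itself: a case analysis on $\deg(c)$, $\deg(d)$ and $\h(x,y,u,v,c,d)$, using Lemma~\ref{cubic}(i) and the computation of $\left((x,y)^2+(u,v)^2\right):(cu+dv)$, shows that $[H]_3$ is always contained in a \emph{principal} ideal, which contradicts the height-two conclusion of Remark~\ref{observ}(a) uniformly for $K=I^{un}$ and $K=L$. Some analysis of $H$ of this kind is unavoidable for the $L$ case, and it is absent from your proposal.

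Part (2) has the same overall shape as the paper's argument (reduce to showing the coefficient $r$ of $ax+by$ lies in $(x,y)$, then invoke part (1)), and your first step $r_0\in(u,v)S$ is fine. But the decisive step --- ruling out $r_0=\alpha u+\beta v\neq 0$ --- is exactly what you defer to an unperformed ``degree-$2$ linear-algebra analysis,'' and your sketch never indicates where the hypothesis $\h(u,v,c,d)=4$ enters, although it must: it is the only thing preventing, say, $cu+dv$ from degenerating in a way that lets $u(ax+by)$ fall into $K_2+(x,y)^2(u,v)$. The paper's route is a colon computation: from $u(ax+by)\in K'=K_2+(x,y)^2(u,v)$ one shows $K':u\subseteq (x,y)^2+(u,v)$, where the inclusion $(x^2,xy,y^2,v^2,cu+dv):u\subseteq (x,y)^2+(u,v)$ is justified precisely by $\h(u,v,c,d)=4$ (so that $u$ avoids the relevant ideal locally at $(x,y,u,v)$); this gives $ax+by\in(x,y)^2+(u,v)$, hence $a\in(x,y,u,v)$, contradicting $\h(x,y,u,v,a,b)=6$. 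You would need to carry out the analogous computation inside your module-theoretic setup before part (2) can be considered proved. Part (3) and the overall architecture are fine and essentially match the paper, but both (2) and (3) rest on (1), so the $K=L$ gap propagates.
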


\begin{proof}
Since $K$ contains no quadrics, $\h(x,y,u,v)=4$ and so the ideal $(x,y)^2\cap (u,v)^2=(x,y)^2(u,v)^2$ contains no cubics.

(1) It suffices to prove that if $[K]_3=[H]_3$, then $\h\left([K]_3\right)\leq 1$, which is a contradiction, by Remark \ref{observ}(a).  If $c,d \in R_2$, then $cu+dv \in R_3$, so by Lemma \ref{cubic}(i) there is a cubic $F$ with $[H]_3 \subseteq \left[ (F) + \left((x,y)^2 \cap (u,v)^2\right)\right]_3 = (F)$. Similarly if $\deg(c) = \deg(d) \ge 3$, there are no cubics in $H$, a contradiction.
Assume then $c,d\in R_1$. If $\h(x,y,u,v,c,d)=4$, then since $(u,v)^2 \subseteq K_2$, we may further assume $(c,d)=(x,y)$ and thus $H=(x,y)^2 \cap \left((u,v)^2 + (xu+yv)\right)=(xu+yv)(x,y) + \left((x,y)^2 \cap (u,v)^2\right)$. Therefore all cubics in $H$ are multiple of $xu+yv$.

Finally, we may assume $\h(x,y,u,v,c)=5$. In particular, this implies $cu+dv\notin (x,y)^2+(u,v)^2$ and since $(u,v)\subseteq J:=\left((x,y)^2+(u,v)^2\right) :(cu+dv)$ and $J$ is $(x,y,u,v)$-primary, we deduce that $(u,v)+(x,y)^2 \subseteq J \subseteq (u,v,x,y)$. If a linear form $\ell$ of $(x,y)$ lies in $J$, then $\ell(cu+dv)\in (x,y)^2+(u,v)^2$ yields $\ell c u \in (x,y)^2+(u^2,v)$, so $\ell c \in (x,y)^2 + (u,v)$, and then $c\in (x,y,u,v)$, which is a contradiction. Then $J=(x,y)^2 + (u,v)$, and thus
\[ H = (x,y)^2 \cap \left[ (u,v)^2 + (cu+dv)J \right] = (cu+dv)(x,y)^2 + \left((x,y)^2\cap (u,v)^2\right).\]
Since $(x,y)^2\cap (u,v)^2$ contains no cubics, $[K]_3\subseteq (cu+dv)$.

(2) Assume toward a contradiction that $\h(x,y,u,v,a,b)=6$.  Then $K\subseteq K_1(u,v) \cap K_2$. By part (1) it suffices to prove that all cubics in $K$ are contained in $(x,y)^2 \cap K_2$. 
If $\deg(a)=\deg(b)\geq 2$ this follows because all cubics in $K_1\cap (u,v)=K_1(u,v)$ are contained in $(x,y)^2(u,v)\subseteq (x,y)^2$.
Assume then $a,b\in R_1$. If there was a cubic of the form $\ell (ax+by) + F $ in $K$, for some $F\in (x,y)^2(u,v)$ and $0\neq \ell \in (u,v)$, then without loss of generality we may assume $\ell=u$. 

Then $(ax+by)u \in K'=K_2+(x,y)^2(u,v)=(u,v)(u,v,x^2, xy,y^2) + (cu+dv)$ and so $ax+by\in K':u$. Notice that 
\begin{align*}
K':u 	& = \left((x,y)^2+(u,v) \right) + (vx^2, vxy, vy^2, v^2, cu+dv):u \\
	& \subseteq \left((x,y)^2+(u,v) \right) + (x^2, xy, y^2, v^2, cu+dv):u\\
	& \subseteq (x,y)^2+(u,v).
\end{align*}
The last inclusion holds because $\h(u,v,c,d)=4$ so $u\notin A:=(x^2, xy, y^2, v^2, cu+dv)_{\p}$, where $\p=(x,y,u,v)$, because the ideal $(x,y)^2+(u,v)$ is $\p$-primary, and because $A= \left((x,y)^2 + (K_2)\right)_{\p} $, so $(K':u)_{\p}=\left((x,y)^2+ (u,v)\right)_{\p}$.

Then $ax+by\in (x,y)^2+(u,v)$ and thus $ax \in (x^2,y,u,v)$, so $a\in (x,y,u,v)$, a contradiction to our assumption. 


(3) By part (1) it suffices to show that if $\deg(a)=\deg(b)\geq 3$ or $\h(x,y,u,v,a,b)\geq 5$, then $[K]_3=\big[(x,y)^2\cap ((u,v)^2+(cu+dv))  \big]_3$.
This is clear if $\deg(a)=\deg(b)\geq 3$, because in this case $\deg(ax+by)\geq 4$. On the other hand, if $\h(x,y,u,v,a,b)\geq 5$, then $ax+by\notin (x,y)^2+(u,v)$, and Lemma \ref{cubic}(i) implies that $[K]_3=\big[(x,y)^2\cap ((u,v)^2+(cu+dv))  \big]_3$.
\end{proof}

%

\begin{lem}\label{1122deg1}
Proposition \ref{1122} holds if one further assumes $a,b,c,d$ are linear forms.
\end{lem}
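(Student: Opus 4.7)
The plan is to combine the height bounds from Lemma \ref{1122lem}(2) with either Proposition \ref{<=t} or an impossibility argument that rules out the ``generic'' case.

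First I will normalize. Using that $(x,y)^2 \subseteq K_1$, one may modify the coefficient pair $(a,b)$ modulo $(x,y)$ without changing $K_1$. Together with $\h(x,y,u,v,a,b) \le 5$ from Lemma \ref{1122lem}(2), this lets me assume $b \in (u,v)$; symmetrically, $d \in (x,y)$. Under these normalizations $K_1 \subseteq k[x,y,u,v,a]$ and $K_2 \subseteq k[x,y,u,v,c]$, so $K = K_1 \cap K_2$ sits inside the polynomial subring $T = k[x,y,u,v,a,c]$ of dimension at most six.

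If $\h(x,y,u,v,a,c) \le 5$, then $K$ is extended from a polynomial subring of dimension at most $5$, and Proposition \ref{<=t} immediately gives $\pd(R/I) \le \max\{3,5\} = 5$.

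The remaining case is $\h(x,y,u,v,a,c) = 6$, which I claim is vacuous. Every cubic $F \in K_1 \cap K_2$ has presentations $F = \lambda(ax+by) + G_1 = \mu(cu+dv) + G_2$ with $\lambda,\mu \in R_1$, $G_1 \in [(x,y)^2]_3$, and $G_2 \in [(u,v)^2]_3$. Subtracting gives
\[ \lambda(ax+by) - \mu(cu+dv) \in [(x,y)^2]_3 + [(u,v)^2]_3. \]
Expanding $\lambda = \sum \lambda_i z_i$ and $\mu = \sum \mu_i z_i$ in the basis $\{x,y,u,v,a,c\}$ of $T_1$, the cubic monomials of $T$ lying outside $[(x,y)^2]_3 + [(u,v)^2]_3$ are exactly those of type $(i,j,k)$ with at most one factor from $\{x,y\}$ and at most one from $\{u,v\}$ (so at least one factor from $\{a,c\}$). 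Setting the coefficient of each such ``forbidden'' monomial to zero in turn---for example $xa^2, xac, uac, uc^2$ force $\lambda_a=\lambda_c=\mu_a=\mu_c=0$; then $xua, xuc$ force $\lambda_u=\mu_x=0$; finally $xva, yuc$ force $\lambda_v=\mu_y=0$---we conclude $\lambda \in (x,y)$ and $\mu \in (u,v)$. Therefore $\lambda(ax+by) \in (x,y)\cdot(x,y) = (x,y)^2$ and $\mu(cu+dv) \in (u,v)^2$, forcing $F \in [(x,y)^2]_3 \cap [(u,v)^2]_3 = [(x,y)^2(u,v)^2]_3 = 0$ (the product is generated in degree $4$). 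Hence $[K]_3 = 0$, contradicting Remark \ref{observ}(a), which guarantees that $K = I^{un}$ or $K = L$ contains at least three linearly independent cubics (and contradicting Lemma \ref{1122lem}(1) as well).

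The main obstacle is the coefficient bookkeeping in the impossibility argument: one must enumerate all $24$ forbidden cubic monomials in $T$ and track the precise contributions from $\lambda ax, \lambda by, \mu cu, \mu dv$ to each one. The algebra is elementary but requires careful case-tracking to be sure the forcing $\lambda \in (x,y)$, $\mu \in (u,v)$ is complete; once this is done the lemma reduces entirely to the clean application of Proposition \ref{<=t}.
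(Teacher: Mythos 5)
Your proposal is correct and its skeleton matches the paper's proof exactly: normalize using $(x,y)^2\subseteq K_1$, $(u,v)^2\subseteq K_2$ and Lemma~\ref{1122lem}(2) so that $K$ is extended from $k[x,y,u,v,a,c]$, dispose of the case $\h(x,y,u,v,a,c)\le 5$ via Proposition~\ref{<=t}, and show the case $\h(x,y,u,v,a,c)=6$ cannot occur. The only genuine divergence is in that last exclusion. The paper computes the colon ideal $J=\bigl((x,y)^2+(u,v)^2+(cu+xv)\bigr):(ax+uy)=(x,y)^2+(u,v)^2+(cu)$, notes $(ax+uy)J$ has no cubics, and concludes via Remark~\ref{observ}(b) that $[K]_3\subseteq[(x,y)^2\cap K_2]_3$, contradicting Lemma~\ref{1122lem}(1). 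You instead compare coefficients of the ``forbidden'' cubic monomials directly to force $\lambda\in(x,y)$ and $\mu\in(u,v)$, yielding the stronger conclusion $[K]_3=0$; I checked the bookkeeping and it closes up correctly. Your route is more elementary and self-contained but longer; the paper's is slicker because the single colon-ideal computation packages all the monomial cancellation at once. Two small points to tighten, neither fatal: (1) modifying $(a,b)$ modulo $(x,y)$ only yields $b\in(u,v,a)$; to reach $b\in(u,v)$ you also need a linear change of basis of $(x,y)$ absorbing the $a$-component into $ax$ (as the paper does), and one should note this preserves $\h(x,y,a,b)=4$; (2) you expand $\lambda,\mu$ in the basis of $T_1$ without first arguing they lie in $T_1$ --- this is true, but requires the extra observation that for any variable $w$ outside $T$ the coefficients of $w\cdot ax$ and $w\cdot cu$ in the relation also vanish, since those monomials are not divisible by $x^2,xy,y^2,u^2,uv,v^2$.
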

\begin{proof}
Since $K$ contains no quadrics, ${\rm ht}(x,y,u,v)=4$, and by Lemma \ref{1122lem}(2) we have ${\rm ht}(x,y,u,v,a,b)\leq 5$ and  ${\rm ht}(x,y,u,v,c,d)\leq 5$. 

If ${\rm ht}(x,y,u,v,a,b,c,d)\leq 5$ the statement follows by Proposition \ref{<=t}. 
We may then assume ${\rm ht}(x,y,u,v,a,b)={\rm ht}(x,y,u,v,c,d)=5$, and $b\in (x,y,u,v,a)$ and $d\in(x,y,u,v,c)$. Since $(x,y)^2\subseteq K_1$ and $(u,v)^2\subseteq K_2$, without loss of generality, we may assume $b\in (u,v,a)$ and $d\in (x,y,c)$. After possibly a linear change of the $x$ and $u$ variables, we may further assume $b\in(u,v)$ and $d\in (x,y)$ and then take $b=u$ and $d=x$. 
Observe that $h={\rm ht}(x,y,u,v,a,c)\leq 6$. We prove $h\leq 5$, which concludes the proof (by Proposition \ref{<=t}).

Indeed, if $h=6$, then $J:=\left((x,y)^2+(u,v)^2+(cu+xv)\right):(ax+uy)=(x,y)^2+(u,v)^2 +(cu)$, so $(ax+uy)J$ contains no cubics. Since Remark \ref{observ}(b) yields that 
$K=\left((x,y)^2 + (ax+uy)J\right) \cap K_2$, then 
$[K]_3\subseteq [(x,y)^2 \cap K_2]_3$, 
which is impossible by Lemma \ref{1122lem}(1).
\end{proof}

The next lemma finishes the proof of Proposition~\ref{11;22}.
\begin{lem}\label{1122deg2}
Proposition \ref{1122} holds when $\deg(a)=\deg(b)\geq 2$.
\end{lem}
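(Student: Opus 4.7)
The plan is to apply Lemma~\ref{1122lem}(3) to reduce $a,b$ into $(u,v)$, split on the common degree of $c$ and $d$, and in each sub-case either apply Proposition~\ref{<=t} after further changes of coordinates or compute an explicit link via Lemma~\ref{ses} and Theorem~\ref{pd+1}(i). First, by Lemma~\ref{1122lem}(3), $\deg a=\deg b=2$ and $a,b\in(x,y,u,v)$. Since $(x,y)^2\subseteq K_1$, we may change representatives of $a,b$ modulo $(x,y)$-multiples so that $a=\alpha u+\beta v$ and $b=\gamma u+\delta v$ for linear forms $\alpha,\beta,\gamma,\delta$; then
\[
ax+by=\alpha xu+\beta xv+\gamma yu+\delta yv\in(x,y)(u,v),
\]
so $K\subseteq K_1\cap K_2\subseteq (x,y)\cap (u,v)=(x,y)(u,v)$.

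I then split into two sub-cases according to $\deg c=\deg d$. In the sub-case $\deg c=\deg d=2$, applying Lemma~\ref{1122lem}(3) with $K_1,K_2$ interchanged gives $c,d\in(x,y,u,v)$, and after reducing modulo $(u,v)$ we may take $c=p_1x+p_2y$ and $d=q_1x+q_2y$ for linear $p_i,q_j$, so $cu+dv\in(x,y)(u,v)$ as well. Using invertible linear changes of coordinates on the pairs $(x,y)$ and $(u,v)$, together with the modular freedom $(\alpha,\beta)\mapsto(\alpha+\lambda v,\beta-\lambda u)$ (and its analogues for $\gamma,\delta$ and for $p_i,q_j$), I would bring the pair of $2\times 2$ coefficient matrices $M=\bigl(\begin{smallmatrix}\alpha&\beta\\ \gamma&\delta\end{smallmatrix}\bigr)$ and $N=\bigl(\begin{smallmatrix}p_1&q_1\\ p_2&q_2\end{smallmatrix}\bigr)$ into a normal form; in each resulting sub-sub-case I pick an explicit regular sequence $C=(F_1,F_2)\subseteq K$ of cubics in $(x,y)(u,v)$ and compute $L'=C:K$ directly, verifying $\pd(R/L')\leq 3$ via Lemma~\ref{ses} so that $\pd(R/I)\leq 4$ by Theorem~\ref{pd+1}(i). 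Whenever all coefficient data lies in $k\text{-span}\{x,y,u,v,w\}$ for at most one extra linear form $w$, Proposition~\ref{<=t} applies directly with $t\leq 5$. In the sub-case $\deg c=\deg d=1$, $cu+dv$ is a quadric and $K_2$ is generated by quadrics; by Lemma~\ref{1122lem}(2), $\h(x,y,u,v,c,d)\leq 5$, so after reducing $c,d$ modulo $(u,v)$ we may take $c,d\in k\text{-span}\{x,y,w\}$ for at most one extra linear form $w$. A Mayer--Vietoris analysis via Lemma~\ref{ses}(2) using $\pd(R/K_i)=3$ from \cite[Proposition~11]{En1} reduces the task to bounding $\pd(R/(K_1+K_2))\leq 5$; this follows because $(x,y)^2+(u,v)^2$ is Cohen--Macaulay of projective dimension $4$ (being a tensor product of ideals in disjoint sets of variables), and the additional generators $ax+by$ and $cu+dv$, both lying in $(x,y)(u,v)$ after our reductions, contribute at most one to the projective dimension once the modular simplifications of $\alpha,\beta,\gamma,\delta$ are applied.

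The main obstacle is the intricate bookkeeping in the first sub-case: the pair of coefficient matrices $(M,N)$ admits several distinct normal forms under the $GL_2(k)\times GL_2(k)$-action, and each requires an individual link computation. The key simplification throughout is the containment $K\subseteq(x,y)(u,v)$, which together with the explicit shape of the cubic generators $ax+by$ and $cu+dv$ constrains the linked ideal $C:K$ sufficiently to give $\pd(R/(C:K))\leq 3$ uniformly, yielding $\pd(R/I)\leq 5$ in every case via Theorem~\ref{pd+1}(i).
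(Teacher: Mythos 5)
Your opening reduction (Lemma~\ref{1122lem}(3), then moving $a,b$ into $(u,v)$ and splitting on $\deg c$) matches the paper's, but both of your sub-cases have genuine gaps, and the central mechanism of the paper's proof is missing. In the sub-case $\deg c=\deg d=2$, your coefficient matrices $M,N$ have entries $\alpha,\beta,\gamma,\delta,p_i,q_j$ that are arbitrary linear forms in the ambient ring, possibly involving up to eight variables beyond $x,y,u,v$; a $GL_2(k)\times GL_2(k)$ normal-form analysis does not reduce this to finitely many computable link calculations, and your claim that $\pd(R/(C:K))\leq 3$ holds ``uniformly'' across all resulting cases is asserted, not proved. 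The step you are missing is the one the paper leans on: by Lemma~\ref{1122lem}(1) the cubics of $K$ cannot all lie in $(x,y)^2\cap K_2$, and combining this with Remark~\ref{observ}(b) and (a variant of) Lemma~\ref{cubic}(ii) forces $cu+dv\in (x,y)^2(u,v)+(x,y)(u,v)^2$ — hence $cu+dv\in k[x,y,u,v]$ — and then $ax+by\in k[x,y,u,v]$ as well, so $K$ is extended from a four-variable polynomial ring and Proposition~\ref{<=t} finishes immediately, with no link computations at all. (The paper also first rules out $\h(x,y,u,v,c,d)=5$ by showing $[K]_3$ would then lie in a principal ideal; you skip this and carry an extra variable $w$ into the second sub-case.)

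In the sub-case $\deg c=\deg d=1$ there are two further problems. First, the assertion that the generators $ax+by$ and $cu+dv$ ``contribute at most one to the projective dimension'' of $(x,y)^2+(u,v)^2$ has no basis: adding a single generator to an ideal can raise projective dimension by an arbitrary amount, and Lemma~\ref{ses}(1) bounds $\pd$ of the smaller ideal in terms of the larger one, not the reverse. Second, even granting $\pd(R/K)\leq 4$, this yields $\pd(R/I)\leq 5$ via Theorem~\ref{pd+1}(i) only when $K=L$; when $K=I^{un}$ a bound on $\pd(R/I^{un})$ does not transfer to $\pd(R/L)$ (linkage preserves projective dimension only across even numbers of links), which is why the paper's argument in this sub-case instead shows $(c,d)=(x,y)$ and $ax+by=G+\ell(cu+dv)$ with $G\in(u,v)^2(x,y)$, so that $K$ is extended from $k[x,y,u,v,\ell]$ and Proposition~\ref{<=t} applies to both the $K=I^{un}$ and $K=L$ cases at once.
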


\begin{proof}
Since $K$ contains no quadrics, then ${\rm ht}(x,y,u,v)=4$. By Lemma \ref{1122lem}(3) we only need to prove the statement when ${\rm ht}(x,y,u,v,a,b) = 4$ and $\deg(a)=\deg(b)=2$.  

We first show ${\rm ht}(x,y,u,v,c,d)=4$. If not, we may assume ${\rm ht}(x,y,u,v,c)=5$. We claim that $(x,y)^2\cap K_2= (x,y)^2K_2$. It suffices to prove $(x,y)^2\cap K_2 \subseteq (x,y)^2K_2$. By Remark \ref{observ}(b) we have $(x,y)^2\cap K_2=(x,y)^2\cap [(u,v)^2+(cu+dv)J]$, where $J=\big[(x,y)^2+(u,v)^2\big]:cu+dv\subseteq \big[(x,y)^2+(u^2,v)\big]:cu=\big((x,y)^2,u,v\big)$.  Therefore  
\begin{align*}
(x,y)^2\cap K_2 \phantom{\,\,\,\,\,} &
=(x,y)^2 \cap \big[(u,v)^2+(cu+dv)J\big]\\
&\subseteq (x,y)^2\cap \big[(u,v)^2+\big(cu+dv\big)\big((x,y)^2+(u,v)\big)\big]\\ 
&=(x,y)^2(cu+dv)+(x,y)^2\cap (u,v)^2\\
&=(x,y)^2K_2
\end{align*}
Since $ax+by\in R_3$, by Lemma \ref{cubic}(i), there exists $F\in R_3$ such that  
$[K]_3\subseteq \big[(ax+by+F)+ (x,y)^2\cap K_2\big]_3 \subseteq \big[(ax+by+F)+ (x,y)^2K_2\big]_3=\big[(ax+by+F)\big]_3$. This contradicts Remark~\ref{observ}(a).

Therefore, we may assume ${\rm ht}(x,y,u,v,c,d)=4={\rm ht}(x,y,u,v,a,b)$.
Since $\h(x,y,u,v) =4$, we have $a,b\in (x,y,u,v)$. Since $(x,y)^2 \subseteq K_1$, we may further assume $a,b\in(u,v)$. Similarly we may assume $c,d\in (x,y)$.

(1) If $\deg(c)=\deg(d)=1$, since $c,d\in (x,y)$ and ${\rm ht}(u,v,c,d)= 4$, we have $(c,d)=(x,y)$. 
Then $K_2\cap (x,y)=(cu+dv)+ \left((u,v)^2 \cap (x,y)\right) = (u,v)^2(x,y) + (cu+dv)$. Since $K=K_1\cap K_2\cap (x,y)$, then 
$K= K_1\cap \left((u,v)^2(x,y) + (cu+dv)\right).$

Lemma \ref{1122lem}(1) and Remark \ref{observ}(b) imply that $ax+by\in (x,y)^2+(u,v)^2(x,y)+(cu+dv)$, and by Lemma \ref{cubic}(ii) we may assume $ax+by\in (u,v)^2(x,y)+(cu+dv)$. Then there exists $\ell \in R_1$ such that $ax+by=G+\ell(cu+dv)$ for some $G\in (u,v)^2(x,y)$. In particular, $ax+by\in S=k[x,y,u,v,\ell]$. Then both $K_1$ and $K_2$ are extended from $S$, and so is $K$. The statement now follows from Proposition \ref{<=t}.

(2) We may then assume $\deg(c)=\deg(d)\geq 2$. By Lemma \ref{1122lem}(3) we may assume $\deg(c)=\deg(d)=2$ and ${\rm ht}(x,y,u,v)={\rm ht}(x,y,u,v,c,d)={\rm ht}(x,y,u,v,a,b)=4$. Similarly to the above, intersecting $K$ with $(x,y)$ and $(u,v)$ one obtains $K=\big[(x,y)^2(u,v)+(ax+by)\big]\cap \big[(u,v)^2(x,y)+(cu+dv)\big]$.
As before, Lemma \ref{1122lem}(1) yields $ax+by\in (x,y)^2(u,v)+(u,v)^2(x,y)+(cu+dv)$, and thus a proof similar to the one of Lemma \ref{cubic}(ii) yields 
$$K=(ax+by)+\big[(x,y)^2(u,v)\cap \big((u,v)^2(x,y)+(cu+dv)\big)\big].$$
By Remark \ref{observ}(b), 
 if $cu+dv\notin H=(x,y)^2(u,v)+(x,y)(u,v)^2$ then $[K]_3=[(ax+by)]_3$, which contradicts Remark \ref{observ}(a). Thus $cu+dv\in H$ and then $cu+dv$ is written purely in terms of $u,v,x,y$, i.e. $cu+dv\in A_3$ where $A=k[x,y,u,v]$. By the above, $ax+by\in \big[H+(cu+dv)\big]_3\subseteq A_3$, thus $K_1$ and $K_2$ are extended from $A$, and so is $K$. Therefore $\pd(R/I)\leq 4$ by Proposition \ref{<=t}. 
\end{proof}
\bigskip

\subsection{Type $\boldsymbol{\langle 1,1,2;1,1,1\rangle}$}

Note that in addition to the case at hand, the following proposition covers some cases of type $\langle 1,1,1,1;1,1,1,1\rangle$, that is, certain intersections of four linear primes of height two.

\begin{prop}\label{112;111}
Let $K=(x,y)\cap (u,v)\cap (z,q)$ for some linear forms $x,y,u,v,z$ and a quadric $q$. If either $K =  I^{un}$ or $K=L$, then  
$\pd(R/I)\leq 5$. 
\end{prop}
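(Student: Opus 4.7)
The plan is to reduce to showing $\pd(R/K)\le 4$, since linkage yields $\pd(R/I)\le\pd(R/K)+1$: directly by Theorem~\ref{pd+1}(i) when $K=L$, and by combining it with Theorem~\ref{linkage}(c) when $K=I^{un}$. I split the argument on the height $h:=\h(x,y,u,v,z,q)$.

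If $h\le 5$ and every linear form appearing in $q$ lies in $\mathrm{span}(x,y,u,v,z)$, then $K=(x,y)\cap(u,v)\cap(z,q)$ is extended from a polynomial subring $A=k[h_1,\ldots,h_t]\subseteq R$ with $t\le 5$, generated by a regular sequence chosen from a maximal linearly independent subset of $\{x,y,u,v,z\}$ and augmented by $q$ only when $q\notin(x,y,u,v,z)$. Proposition~\ref{<=t} then yields $\pd(R/I)\le\max\{3,t\}\le 5$. The edge cases where $q$ introduces a linear form outside $\mathrm{span}(x,y,u,v,z)$ (most notably the reducible case $q=vw$ with $w$ a genuinely new variable, which also accounts for the type $\langle 1,1,1,1;1,1,1,1\rangle$ instances the proposition is meant to cover) admit a direct computation analogous to the central case below.

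The central case is $h=6$, which forces $\h(x,y,u,v)=4$ and $\h(x,y,u,v,z)=5$. Then $(x,y)\cap(u,v)=(x,y)(u,v)$, and I claim $z,q$ is a regular sequence on $R/(x,y)(u,v)$: indeed $z\notin(x,y)\cup(u,v)$, while the associated primes of $((x,y)(u,v),z)=(x,y,z)\cap(u,v,z)$ are $(x,y,z)$ and $(u,v,z)$, neither of which contains $q$ by the hypothesis $q\notin(x,y,u,v,z)$. Consequently $K=(x,y)(u,v)\cap(z,q)=(x,y)(u,v)(z,q)$, generated by the eight cubics $\{xuz,xvz,yuz,yvz,xuq,xvq,yuq,yvq\}$.

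To bound $\pd(R/K)$, I apply Lemma~\ref{ses}(2) to $K=J_1\cap J_2$ with $J_1=(x,y)(u,v)$ and $J_2=(z,q)$. Mayer--Vietoris for $(x,y)\cap(u,v)$ gives $\pd(R/J_1)\le 3$, while $\pd(R/J_2)=2$. Since $z,q$ is a regular sequence on $R/J_1$, adjoining $z$ and then $q$ increases projective dimension by one each, so $\pd(R/(J_1+J_2))=\pd(R/J_1)+2\le 5$. The lemma then delivers $\pd(R/K)\le\max\{3,2,5-1\}=4$, hence $\pd(R/I)\le 5$. The main technical obstacle is the regular-sequence verification in the $h=6$ case, combined with the careful case-by-case bookkeeping for the edge scenarios that arise when $q$ is reducible and brings in a ``new'' linear factor.
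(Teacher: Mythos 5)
Your proposal has two genuine gaps, one in the reduction and one in the case analysis. On the reduction: when $K=L$, Theorem~\ref{pd+1}(i) does give $\pd(R/I)\le\pd(R/K)+1$, but when $K=I^{un}$ the inequality $\pd(R/I)\le\pd(R/I^{un})+1$ is not available from the tools you cite. Theorem~\ref{linkage}(c) equates projective dimensions of \emph{evenly} linked ideals ($J\sim H\sim L$), whereas $I^{un}$ and $L=(f_1,f_2):f_3$ are directly (oddly) linked, so $\pd(R/L)$ need not equal $\pd(R/I^{un})$. This is exactly why the paper, in every $K=I^{un}$ subcase, either computes an explicit link $L'=C:I^{un}$ and bounds $\pd(R/L')$ (then passes to $L$ by Theorem~\ref{linkage}(c) and applies Theorem~\ref{pd+1}(i)), or falls back on Proposition~\ref{<=t}, Remark~\ref{observ}(c), or Corollary~\ref{x} --- it never concludes anything from a bound on $\pd(R/I^{un})$ alone. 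Your plan collapses the two cases into one, and the $K=I^{un}$ half is unsupported.

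On the case analysis: your ``central case'' $\h(x,y,u,v,z,q)=6$ is vacuous. There $K=(x,y)(u,v)(z,q)$, whose degree-three part is $z(x,y)(u,v)\subseteq(z)$, so the cubics of $K$ generate an ideal of height one, contradicting Remark~\ref{observ}(a) (which applies whether $K=I^{un}$ or $K=L$); the paper dismisses this configuration in one line. All the real content lives in the situations you defer as ``edge cases,'' namely when $q$ involves linear forms outside the span of $x,y,u,v,z$. For instance, when $\h(x,y,u,v,z)=5$ and $q=l_1x+l_2y$ with $\h(x,y,u,v,z,l_1,l_2)=6$, the ideal $K$ is not a product of the three primes, Proposition~\ref{<=t} only yields a bound of $6$ or more, and the paper must compute the link $L'=(qu,xzv):K=(x,l_2)\cap(z,u)\cap(v,q)\cap(x,u)$ and bound $\pd(R/L')\le 4$ via Lemma~\ref{ses}; this is precisely where the answer is $5$ rather than $4$. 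The case $z\in(x,y)+(u,v)$ likewise requires the colon and decomposition arguments of the paper's second case, with different endgames for $K=I^{un}$ (Corollary~\ref{x}) and $K=L$ (a mapping-cone bound on $\pd(R/K)\le 4$). A ``direct computation analogous to the central case'' does not exist for these configurations, so the proof is incomplete exactly where the proposition is hardest.
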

\begin{proof}
If ${\rm ht}(x,y,u,v)\le 3$, or ${\rm ht}(x,y,z)\le 2$, or ${\rm ht}(u,v,z)\le2$, then $K$ contains a quadric, and by Theorem~\ref{pd+1}(iii) we have $\pd(R/I)\leq 4$. We may then assume ${\rm ht}(x,y,u,v)=4$ and ${\rm ht}(u,v,z)={\rm ht}(x,y,z) = 3$. 

Observe that  ${\rm ht}(x,y,u,v,z,q)\leq 5$, since otherwise $K=(x,y)(u,v)(z,q)$ and all cubics in $K$ would be multiples of $z$, contradicting Remark~\ref{observ}(a). Write $K_1=(x,y)\cap (u,v)$ and $K_2=(z,q)$.\\

\noindent \underline{Case 1: Assume ${\rm ht}(x,y,u,v,z)=5$.}\\ Then $q\in (x,y,u,v,z)$. We first show that either $q\in (x,y,z)$ or $q\in(u,v,z)$. Indeed, if not, by Remark \ref{observ}(b) we have $K_1\cap (z,q)=K_1\cap \big[(z)+qJ\big]$, where $J=\big[K_1+(z)\big]:q$. The height condition yields that $K_1+(z)=(x,y,z)\cap (u,v,z)$, and then by assumption $q$ is regular on $R/K_1$, whence $\big[K_1+(z)\big]:q=K_1+(z)$. 
 Then $$K=K_1\cap \big[(z)+q\big(K_1+(z)\big)\big]=K_1\cap \big[(z)+qK_1\big]=qK_1+\big(K_1\cap (z)\big).$$ 
 Since $qK_1$ is generated in degree four, all cubics in $K$ are multiples of $z$, which contradicts Remark~\ref{observ}(a).

We may then assume $q\in (x,y,z)$. If also $q\in (u,v,z)$, then $q\in K_1 +(z)$, and then $K$ contains a quadric, so $\pd(R/I)\leq 4$ by Theorem \ref{pd+1}(iii). So we may assume $\h(u,v,z,q)=4$. Also, since $K_2=(z,q)$, without loss of generality, we may assume $q\in (x,y)$, say $q=l_1x+l_2y$, for $l_1,l_2\in R_1$.  
Then $K=(x,y)\cap K'$, where $K'=(u,v)(z,q)$, so the modularity law and $\h(x,y,u,v,z)=5$ imply $K=(qu,qv,xzu,xzv,yzu,yzv)$.
 
If $K=L\sim I^{un}$, 
since $\pd(R/(K:z))=\pd(R/(x,y)\cap(u,v))=3$ and $\pd(R/(K+(z)))=\pd(R/(z,q(u,v)))=3$, then by Lemma \ref{ses} we have $\pd(R/K)\leq 3$, hence $\pd(R/I)\leq 4$ by Theorem \ref{pd+1}(i). We may then assume $K=I^{un}$.

Observe that if $\h(x,y,u,v,z,l_1,l_2)\leq 5$, then the statement follows by Proposition \ref{<=t}. We may then assume $\h(x,y,u,v,z,l_2)=6$. This implies that 
$C=(qu,xzv)$ is a complete intersection. The height condition and the expression $q=l_1x+l_2y$ yield that
\[ 
C=(x,y) \cap (x,u) \cap (x,l_2) \cap (z,q) \cap (z,u) \cap (v,q) \cap (u,v).
\]
Since 
$K=(x,y)\cap (u,v)\cap (z,q)$, it follows that
\[L' = C: K = (x,l_2) \cap (z,u) \cap (v,q) \cap (x,u).\]

Now, write $L'=L_1\cap L_2$, where $L_1=(x,l_2)\cap (z,u)\cap (v,q)=(z,u)\cap (q,xv,l_2v)=(z,u)(q,xv,l_2v)$ (by the modularity law and height conditions) and $L_2=(x,u)$.  
One checks that $\pd(R/L_1\oplus R/L_2)=\pd(R/L_1)=4$, and $\pd(R/(L_1+L_2))=\pd(R/(x,u,zl_2(v,y)))=4$; then 
Lemma \ref{ses} yields $\pd(R/L')\leq 4$, and hence $\pd(R/I)\leq 5$ by Theorem \ref{pd+1}(i).\\

\noindent \underline{Case 2: Assume ${\rm ht}(x,y,u,v,z)={\rm ht}(x,y,u,v)=4$.}\\ Then $z\in (x,y)+(u,v)$, so we can write $z=x' + u'$ with $x'\in (x,y)$ and $u'\in (u,v)$.  Since $\h(x,y,z) = \h(u,v,z) = 3$, both $x'$ and $u'$ are nonzero linear forms.  Then, after a change of variables, we may assume $z=x+u$.

If ${\rm ht}(x,y,u,v,q)=5$, the statement follows by Proposition \ref{<=t}, because the generators of $K$ can be written in terms of a regular sequence of length $5$. We may then assume $q\in (x,y,u,v)$.
Since $z=x+u\in (z,q)$, we may actually assume $q\in(x,y,v)$, and we write $q=l_1x+l_2y+l_3v$ for linear forms $l_1,l_2,l_3$.

If ${\rm ht}(x,y,u,l_3)=3$,  then $l_3\in (x,y,u)$, and 
we can modify $l_1$ and $l_2$ so that $l_3\in (u)$; using $z=x+u$ and modifying again $l_1$ to clear the term in $u$ we may take $l_3=0$, i.e. $q=l_1x+l_2y$. 
Then 
$$K=(x,y)\cap (u,v)\cap (x+u,l_1x+l_2y).$$

 If ${\rm ht}(x,y,u,v,l_1,l_2)\leq 5$, the statement follows by Proposition \ref{<=t}. If  instead ${\rm ht}(x,y,u,v,l_1,l_2)=6$, one can check that $\pd(R/K)=\pd(R/L')=3$ for a link $L'$ of $K$, thus $\pd(R/I)\leq 4$ by Theorem \ref{pd+1}(i). A similar argument proves the statement when ${\rm ht}(x,u,v,l_2)=3$ or $\h(y,v, l_1)=2$.

We may then assume ${\rm ht}(x,y,u,l_3)={\rm ht}(x,u,v,l_2) =4$ and $\h(y,v,l_1)=3$. Recall that $K_1=(x,y)\cap (u,v)$.  By the height conditions we have
\[ K_1+(x+u)= (x,u,y) \cap (x,u,v) \cap (x+u, u^2, v,y).\]
Since $(x+u, u^2, v,y)$ is $(x,y,u,v)$-primary of multiplicity two, then either $(x+u, u^2, v,y):q$ is $(x,u,v,y)$ or it is the unit ideal. In either case, 
$J=\left[K_1+(x+u)\right]:q=\left[(x,u,y) \cap (x,u,v)\right] :q$ and then the height assumptions yield $J=(x,u,v)\cap (x,u,y)=(x,x+u,yv)$.
By Remark \ref{observ}(b) we have $K=K_1 \cap \big[(x+u)+qJ \big]$;  since $q=l_1x+l_2y+l_3v$, setting $c=-l_1ux-l_2uy + l_3xv\in K_1$, we have the equality $(qx, x+u)=(c,x+u)$ and then by the modularity law 
\begin{align*}
K& =  K_1\cap \big[(x+u)+q(x,x+u,yv)\big] \\
& = (qyv)+ \left( K_1 \cap (qx,x+u)\right) \\
& = (qyv)+ \left( K_1 \cap (c,x+u)\right)  \\
&=   (qyv, c)+ (x+u)K_1.
\end{align*}
If $K=I^{un}$, observe that $[K]_3=(c)+(x+u)K_1$, then after possibly taking linear combinations we may assume two minimal generators of $I$ are multiples of $x+u$. The statement now follows by Corollary \ref{x}.

If $K = L$, then we consider the short exact sequence
$$0 \lra R/K \lra R/K_1\oplus R/(x+u, q)\lra R/(x+u, q, xu,xv,yu,yv)\lra 0;$$
observe that $(x+u, q, xu,xv,yu,yv)=(x+u, q, x^2, xy, xv,yv)$ and $\pd(R/K_1\oplus R/(x+u,q))=3$, thus $\pd(R/K)\leq \max\{3, \pd(R/(x+u, q, x^2, xy, xv,yv)) -1 \}=\max\{3, \pd(R/H)\}$,  where $H=(q, x^2, xy, xv, yv)$. Now, $\pd(R/(H,x))=\pd(R/(x, q, yv))\leq 3$ and $H:x=(x,y,v)+[(q,yv):x]$. 
Since $\h(y,v,l_1)=3$, it follows that $\h(y,v,q)=3$; thus $(q,yv)=(q,y)\cap (q,v)$. 
Moreover, since $\h(x,y,u,l_3)=4$ and $\h(x,y,u,v)=4$, it follows that $\h(x,y,q)=\h(x,y,l_3v)=3$ and so
\[(q,yv):x\subseteq (q,y):x =(q,y)\subseteq (x,y,v)\]
and therefore
$H:x=(x,y,v)$. Then $\pd(R/(H:x))=3$ and by Lemma \ref{ses}(1), $\pd(R/H)\leq 3$.  So by Lemma \ref{ses}(2), $\pd(R/K)\leq 4$, and Theorem \ref{pd+1}(i) yields $\pd(R/I)\leq 5$. 
\end{proof}

\subsection{Type $\boldsymbol{\langle 1,1,1;1,1,2\rangle}$}

The goal of this subsection is to prove the following result.
\begin{prop}\label{111;112}
Let $\p_1,\p_2,\p_3$ be distinct linear primes of height two. Assume $K=K_1\cap \p_2\cap \p_3$, where $K_1$ is a $\p_1$-primary ideal of multiplicity two.
If either $K=L$ or $K=I^{un}$, then $\pd(R/I)\leq 5$.
\end{prop}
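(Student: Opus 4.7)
The plan is to apply the structure theorem for multiplicity-two height-two primary ideals (\cite[Proposition~11]{En1}) to $K_1$, yielding two possibilities: either $K_1 = (x, y^2)$ with $\p_1 = (x,y)$, or $K_1 = (x,y)^2 + (ax+by)$ for forms $a,b$ with $\h(x,y,a,b) = 4$. Write $\p_2 = (u,v)$ and $\p_3 = (z,w)$. In either case I would branch on the heights of $\p_i + \p_j$ and $\p_1 + \p_2 + \p_3$, finishing immediately by Theorem~\ref{pd+1}(iii) whenever $K$ contains a quadric and by Proposition~\ref{<=t} whenever the generators of $K$ can be written inside a polynomial subring of at most five variables.

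In the subcase $K_1 = (x, y^2)$: since $x \in K_1$ is linear, $x(\p_2 \cap \p_3) \subseteq K$. If $\h(\p_2+\p_3) = 3$, then $\p_2 \cap \p_3$ contains a linear form $\ell$, so $x\ell$ is a quadric in $K$ and Theorem~\ref{pd+1}(iii) applies. Otherwise $\h(\p_2+\p_3) = 4$ and $\p_2 \cap \p_3 = \p_2 \p_3$; if also $\h(x,y,u,v,z,w) \le 5$ use Proposition~\ref{<=t}, while if $\h(x,y,u,v,z,w) = 6$ I would pick an explicit complete intersection of two cubics $C \subseteq K$ (for instance $C = (xuz,\, y^2 vw)$), compute the link $L' = C:K$, and bound $\pd(R/L') \le 4$ by applying Lemma~\ref{ses} to a well-chosen linear form, mirroring the final step of Proposition~\ref{112;111}.

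In the subcase $K_1 = (x,y)^2 + (ax+by)$, the strategy parallels Proposition~\ref{inters}. Assume $K$ contains no quadric, else Theorem~\ref{pd+1}(iii) finishes the argument. If $\deg(a) \ge 2$, successive applications of Lemma~\ref{cubic} constrain the cubics of $K$ either to lie in an ideal extended from a small subring (apply Proposition~\ref{<=t}) or to force two generators of $I$ to share a common factor (apply Corollary~\ref{x}). If $\deg(a) = \deg(b) = 1$, use $(x,y)^2 \subseteq K_1$ to clear $a, b$ modulo $\p_1$ and then perform linear changes of coordinates to reduce to a short list of concrete configurations; each is handled either by Proposition~\ref{<=t} when the total height is small, or by an explicit liaison computation in the spirit of Lemma~\ref{l_i}. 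When $K = L$, I would additionally apply Lemma~\ref{ses}(1) to a linear form $\ell$ in some $\p_i$, so that $R/(L+(\ell))$ has small multiplicity and $R/(L:\ell)$ is governed directly by the structure of $K_1$.

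The main obstacle is expected to be the generic subcase with $K_1 = (x,y)^2 + (ax+by)$, $\deg(a) = \deg(b) = 1$, and $\h(\p_1+\p_2+\p_3) = 6$, where none of Proposition~\ref{<=t}, Theorem~\ref{pd+1}(iii), or Corollary~\ref{x} applies directly. As in Case~2a of the proof of Proposition~\ref{deg2}, this will likely require a careful choice of complete intersection $C \subseteq K$ together with a verification, via the Associativity Formula~\ref{Ass} applied to the associated primes, that $C:K$ coincides with a prescribed unmixed ideal of projective dimension at most $4$.
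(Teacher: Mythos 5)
Your top-level case division (the two forms of $K_1$ from \cite[Proposition~11]{En1}, branching on heights, and the toolkit of Theorem~\ref{pd+1}(iii), Proposition~\ref{<=t}, Corollary~\ref{x}, Lemma~\ref{cubic} and linkage) matches the paper's, but there is a genuine gap: you have misdiagnosed where the difficulty lies, and the step you flag as the ``main obstacle'' is one you cannot carry out as described. The paper first proves (Lemma~\ref{ht6}) that $\h(\p_1+\p_2+\p_3)\le 5$ always: if the height were $6$, then Remark~\ref{observ}(b) and the modularity law show $[K]_3$ is contained in a principal ideal --- in $(x)$ when $K_1=(x,y^2)$, and in $(ax+by)$ when $K_1=(x,y)^2+(ax+by)$ --- contradicting Remark~\ref{observ}(a). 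So your ``generic subcase'' with $\h(\p_1+\p_2+\p_3)=6$ is vacuous, and the linkage computation you plan there cannot even be set up: in the $K_1=(x,y^2)$, height-$6$ situation all cubics of $K$ are multiples of $x$, so $K$ contains no complete intersection of two cubics at all; moreover your proposed $C=(xuz,\,y^2vw)$ is not generated by two cubics, since $y^2vw$ has degree $4$.

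The real work is concentrated in the configurations with $\h(\p_1+\p_2+\p_3)\in\{4,5\}$ and $K_1=(x,y)^2+(ax+by)$, which your sketch dispatches with ``successive applications of Lemma~\ref{cubic}'' and ``a short list of concrete configurations.'' These need substantially more. In the paper's notation ($\p_1=(u,v)$, $K_1=(u,v)^2+(au+bv)$, $\p_2=(x,y)$, $\p_3=(z,w)$): when $\deg(a)=\deg(b)=1$ and $\h(u,v,a,b,z,w)=6$, one must show $au+bv\in(x,y)$ forces $\h(x,y,u,v)=3$ (using Lemma~\ref{ht6} again) and then normalize to $v=x$, $y=a$ before an explicit link computation; when $\h(u,v,a,b,z,w)=\h(u,v,a,b,x,y)=5$ with $\h(u,v,z,w)=4$, one reduces to $b=w$, shows $K_1\cap(z,w)=(z,w)K_1$, and --- in the case $K=L$ --- derives a contradiction from the Hilbert-function comparison of Theorem~\ref{linkage}(d) rather than a projective-dimension bound; and when $\deg(a)=\deg(b)\ge 2$ one needs the auxiliary reduction (Lemma~\ref{help}) that if all cubics of $K$ lie in $(u,v)^2\cap\p_2\cap\p_3$ then $[K]_3$ sits in a principal ideal, again a contradiction. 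None of these steps is visible in your plan, so as written the proposal does not yet constitute a proof.
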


We begin by noticing that ${\rm ht}(\p_1+ \p_2 + \p_3)\leq 5$.
\begin{lem}\label{ht6}
Let $\p_1,\p_2,\p_3$ be distinct linear primes of height two. Assume $K=K_1\cap \p_2\cap \p_3$, where $K_1$ is a $\p_1$-primary multiplicity two ideal.
If either $K=I^{un}$ or $K=L$, then ${\rm ht}(\p_1 + \p_2 + \p_3)\leq 5$.
\end{lem}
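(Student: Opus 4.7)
The plan is to argue by contradiction. Suppose $\h(\p_1+\p_2+\p_3)=6$; after a linear change of coordinates, $\p_1=(x,y)$, $\p_2=(u,v)$, $\p_3=(z,w)$ with $x,y,u,v,z,w$ forming a regular sequence. Since the three primes are generated on disjoint sets of variables, $\p_2\cap\p_3=\p_2\p_3$ and $\p_1\cap\p_2\p_3=\p_1\p_2\p_3$, so every cubic in $K$ lies in $K_1\cap\p_1\p_2\p_3$. Writing out $\p_1\p_2\p_3$ in degree $3$, each such cubic has the form $c=xA+yB$ with $A,B$ in the $k$-span of $\{uz,uw,vz,vw\}$.

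By \cite[Proposition~11]{En1}, either $K_1=(x,y^2)$ or $K_1=(x,y)^2+(ax+by)$ with $\h(x,y,a,b)=4$. In the first case, reducing $c=xA+yB$ modulo $x$ yields $yB$, and requiring $yB\in(y^2)$ forces $B=0$ since $B$ involves no $y$. Hence $[K]_3\subseteq(x)$, contradicting Remark~\ref{observ}(a).

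For the second case, write $a=a_xx+a_yy+\hat a$ and $b=b_xx+b_yy+\hat b$ with $\hat a,\hat b$ not involving $x,y$, so $ax+by\equiv \hat a x+\hat b y\pmod{(x,y)^2}$. Any cubic in $K_1$ has the form $c_0+F(ax+by)$ with $c_0\in(x,y)^2R_1$ and $F\in R_{3-\deg(ax+by)}$, and modulo $(x,y)^2$ one gets $F(ax+by)\equiv\hat F\hat a\cdot x+\hat F\hat b\cdot y$, where $\hat F$ is the $(x,y)$-free part of $F$. Matching against $c=xA+yB$ modulo $(x,y)^2$ (and using that all relevant quantities lie in $k[\text{variables}\neq x,y]$) forces $A=\hat F\hat a$ and $B=\hat F\hat b$. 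Both must lie in $(u,v)(z,w)$; a short observation gives that a product of two nonzero linear forms lies in $(u,v)(z,w)$ iff one factor is in $(u,v)$ and the other in $(z,w)$. Since $\hat a,\hat b$ are linearly independent (from $\h(x,y,a,b)=4$), this forces $\hat a,\hat b$ to span one of $(u,v)$ or $(z,w)$ while $\hat F$ lies in the other.

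The final step is a dimension count. When $\deg a=\deg b=1$, $\hat F$ varies in a $2$-dimensional space and every cubic of $K$ is of the form $\hat F\cdot(x\hat a+y\hat b)$, so $[K]_3$ sits inside the principal ideal $(x\hat a+y\hat b)$, of height one. When $\deg a=\deg b=2$, $\hat F$ is a scalar and $[K]_3$ is at most one-dimensional. When $\deg a\geq 3$, no cubics arise from the $(ax+by)$ summand at all. A short check shows $[(x,y)^2R_1\cap\p_2\p_3]_3=0$, so these account for all cubics in $K$. In every subcase the cubics of $K$ generate an ideal of height at most $1$, contradicting Remark~\ref{observ}(a). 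The main obstacle is the reduction modulo $(x,y)^2$ combined with the factorization analysis of products in $(u,v)(z,w)$; once that is in place the dimension bookkeeping closes the argument.
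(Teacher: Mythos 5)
Your proof is correct, and it reaches the same contradiction as the paper --- namely that under $\h(\p_1+\p_2+\p_3)=6$ all cubics of $K$ lie in a height-one ideal, violating Remark~\ref{observ}(a) --- but the main case is executed by a genuinely different mechanism. The paper handles $K_1=(u,v)^2+(au+bv)$ without choosing coordinates: it applies Remark~\ref{observ}(b) to write $K=\big[(u,v)^2+(au+bv)J\big]\cap \p_2\p_3$ with $J=\big((u,v)^2+\p_2\p_3\big):(au+bv)$, identifies the few possibilities for $J$ (it contains $\p_1$), and then uses the modularity law to obtain $K=(au+bv)J'+(u,v)^2\p_2\p_3$, whence $[K]_3\subseteq(au+bv)$. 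You instead fix coordinates so that the three primes sit on disjoint variables, expand a cubic of $\p_1\p_2\p_3$ as $xA+yB$, and compare with the presentation $c_0+F(ax+by)$ of elements of $K_1$ modulo $(x,y)^2$; the freeness of $(x,y)/(x,y)^2$ over $R/(x,y)$ pins down $A=\hat F\hat a$ and $B=\hat F\hat b$, hence $c=\hat F(x\hat a+y\hat b)\in(x\hat a+y\hat b)$. Your route is more elementary and entirely explicit, at the cost of a coordinate choice; the paper's is shorter and reuses machinery that recurs throughout the paper. One small remark: the intermediate factorization analysis (that $\hat a,\hat b$ span one of $(u,v)$, $(z,w)$ while $\hat F$ lies in the other) is not actually needed --- once $A=\hat F\hat a$ and $B=\hat F\hat b$ are established, the containment $[K]_3\subseteq(x\hat a+y\hat b)$ follows immediately in every degree case, as does the treatment of $\hat F=0$.
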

\begin{proof} 
Assume toward a contradiction that $\h(\p_1+\p_2+\p_3)=6$, and write $\p_1=(u,v)$. By \cite[Proposition~11]{En1}, either $K_1=(u,v^2)$ or $K_1=(u,v)^2+(au+bv)$ for forms $a,b$. If $K_1=(u,v^2)$, then the height condition implies $K=(u,v^2)\p_2\p_3$, so $[K]_3\subseteq (u)$, contradicting Remark \ref{observ}(a). 

We may then assume $K_1=(u,v)^2+(au+bv)$. Let $K_2=\p_2\cap \p_3=\p_2\p_3$. 
By Remark \ref{observ}(b), $K=K' \cap K_2$, where $K'=(u,v)^2 + (au+bv)J $ and $J=\left((u,v)^2 + K_2\right):(au+bv)$.  The height condition implies that a primary decomposition of $(u,v)^2 + K_2$ is $((u,v)^2 + \p_2) \cap ((u,v)^2 + \p_3)$.  Since $u, v\in J$, either $J=R$ or $\p_1+\p_2$ or $\p_1+\p_3$ or $\p_1+K_2$. In any case, since $(au+bv)J \subseteq (u,v)^2 + K_2$ and $(u,v)^2\subseteq K'$, we can write $K'=(u,v)^2 + (au+bv)J'$, where $(au+bv)J' +(u,v)^2 = (au+bv)J + (u,v)^2$ and $(au+bv)J' \subseteq K_2$. Then by the modularity law and the height conditions
\[
K=K' \cap K_2 = (au+bv)J' + \left((u,v)^2 \cap K_2\right)= (au+bv)J' + (u,v)^2K_2
\]
Therefore, $[K]_3\subseteq (au+bv)$, which contradicts Remark \ref{observ}(a).
\end{proof}


The following lemma will be useful to simplify the proofs of the following two results.
\begin{lem}\label{help}
Proposition~\ref{111;112} holds if $\h(u,v,z,w)=\h(u,v,x,y)=3$, or if all cubics in $K$ are contained in $H=(u,v)^2 \cap (x,y) \cap (z,w)$.
\end{lem}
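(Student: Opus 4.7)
I would treat the two cases in the hypothesis separately.

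For Case 1 ($\h(u,v,x,y) = \h(u,v,z,w) = 3$), I would show that $K$ contains a quadric, which by Theorem \ref{pd+1}(iii) immediately yields the stronger bound $\pd(R/I) \leq 4$. The condition $\h(\p_1 + \p_2) = 3$ means the four linear forms $u,v,x,y$ satisfy a nontrivial linear relation, which, since $\p_1$ and $\p_2$ have height 2, produces a nonzero linear form $l_2 \in \p_1 \cap \p_2$; similarly there is $l_3 \in \p_1 \cap \p_3$. After a linear change of the basis of the two-dimensional space $\p_1$, one reduces to two subcases: if $l_2$ and $l_3$ are proportional we may take $l_2 = l_3 = u$, and then $u^2 \in (u,v)^2 \cap \p_2 \cap \p_3 \subseteq K_1 \cap \p_2 \cap \p_3 = K$; if $l_2, l_3$ are linearly independent we may take $l_2 = u$ and $l_3 = v$, so that $uv \in (u,v)^2 \cap \p_2 \cap \p_3 \subseteq K$. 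Either way $K$ contains a quadric.

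For Case 2 (all cubics of $K$ lie in $H = (u,v)^2 \cap (x,y) \cap (z,w)$), the containment $(u,v)^2 \subseteq K_1$ yields $H \subseteq K$, which combined with $[K]_3 \subseteq H$ gives $[K]_3 = [H]_3$. When $K = I^{un}$, the three cubic generators of $I$ lie in $I^{un} = K$ hence in $H$, so $I \subseteq H$. The ideal $H$ is more tractable than $K$ because its $\p_1$-primary component is simply $(u,v)^2$, a complete intersection of height two. The plan is to pick a complete intersection $C \subseteq I$ of two cubics, view $C \subseteq H$, and compute the link $L' = C : H$ explicitly; showing $\pd(R/L') \leq 4$ and using Theorem \ref{linkage}(c) together with Lemma \ref{ses} applied to a well-chosen splitting of $R/I$ (for instance by a linear form in one of the $\p_i$'s or via the surjection relating $R/I$ to $R/H$) would then give $\pd(R/I) \leq 5$. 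For $K = L$, one applies the same strategy to the cubics of $L$ and uses Theorem \ref{linkage}(c) to transfer the bound back to $I$.

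The main obstacle is Case 2. The hypothesis only constrains the cubics of $K$, not $K$ itself, and the auxiliary ideal $H$ has multiplicity $5$ of type $\langle 1,1,1;\,3,1,1\rangle$, which does not appear in Table \ref{table2}, so no prior case handles $H$ directly. Moreover, in the linkage case $K = L$ we do not have $I \subseteq H$ a priori (only that the cubics of $L$ lie in $H$), further complicating the transfer. A careful case analysis combining the explicit description of $H$ as the intersection of $(u,v)^2$ with two linear primes, the Hilbert-function statement Theorem \ref{linkage}(d), and Lemma \ref{ses} splittings is expected to be needed to conclude $\pd(R/I) \leq 5$.
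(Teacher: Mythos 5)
Your Case 1 is essentially the paper's argument and is correct: when $\h(u,v,x,y)=\h(u,v,z,w)=3$ one produces a quadric lying in $(u,v)^2\cap (x,y)\cap (z,w)\subseteq K_1\cap\p_2\cap\p_3=K$ (the paper takes $yw$ with $y,w\in(u,v)$ after a change of coordinates, which is the same as your $u^2$ or $uv$), and Theorem~\ref{pd+1}(iii) finishes.

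Case 2, however, contains a genuine gap, and the gap is not just that the plan is left unexecuted. First, the link you propose to compute is the wrong one: the linkage machinery that controls $\pd(R/I)$ goes through $C:I=C:I^{un}=C:K$ (Remark~\ref{observ}(d) and Theorem~\ref{pd+1}(i)), not through $C:H$. You only know $[K]_3=[H]_3$ and $H\subseteq K$; the ideals $K$ and $H$ themselves differ in general (their $\p_1$-primary components are $K_1$ and $(u,v)^2$, which have different multiplicities), so $C:H$ need not equal $C:K$ and a bound on $\pd(R/(C:H))$ does not transfer to $I$. Second, and more importantly, you never use the one fact that makes this case trivial, namely Remark~\ref{observ}(a): the cubics of $K$ (whether $K=I^{un}$ or $K=L$) generate an ideal of height two. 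The paper's proof of Case 2 is not a projective-dimension computation at all; it shows that the hypothesis ``all cubics of $K$ lie in $H$'' \emph{forces} $\h(u,v,x,y)=\h(u,v,z,w)=3$, i.e.\ reduces Case 2 to Case 1. Indeed, if say $\h(u,v,x,y)=4$ then $H=(u,v)^2(x,y)\cap(z,w)$, and a short height analysis on $\h(u,v,x,y,z,w)\in\{4,5,6\}$ (using Proposition~\ref{inters} to dispose of the subcase $\h(x,y,z,w)=3$) shows that $[H]_3$ is either zero or contained in a principal ideal; since $[K]_3\subseteq[H]_3$, this contradicts Remark~\ref{observ}(a). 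Without this observation your Case 2 would require classifying $K$ itself and carrying out a multi-case linkage computation that the hypothesis does not support, so as written the proposal does not prove the lemma.
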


\begin{proof} First, suppose ${\rm ht}(u,v,z,w)={\rm ht}(u,v,x,y)=3$.  After possibly a change of coordinate, we may assume $w\in (u,v)$ and $y\in (u,v)$. Then the quadric $yw\in K$ and the statement follows by Theorem~\ref{pd+1}(iii).

Next, assume all cubics in $K$ lie in $H$. We show this implies $\h(u,v,x,y)=\h(u,v,z,w)=3$. Indeed, assume to the contrary that $\h(u,v,x,y)=4$; then $H=(u,v)^2(x,y) \cap (z,w)$. If $\h(u,v,x,y,z,w)=6$, then $H=(u,v)^2(x,y)(z,w)$, which contains no cubics, a contradiction. Now assume $\h(u,v,x,y,z,w)=5$; then we may assume $z \notin (u,v,x,y)$.  It follows that all cubics in $H$ are multiples of $w$, contradicting Remark \ref{observ}(a).


We may then assume $\h(u,v,x,y,z,w)=4$, and so $(z,w)\subseteq (u,v,x,y)$. By Proposition~\ref{inters}, we may assume $\h(x,y,z,w)=4$.   Since $\h(u,v,z,w)\geq 3$, after a change of coordinates we may either assume $w=u$ and $z=y+v$, or $w=u+x$ and $z=y+v$. In the latter case
\[ H = (u,v)(vx-uy) + y(u,v)^2(y+v), u^2(u+x)(x,y)\]
while in the former case  $H = (u^2, uv) (x,y) + v^2(x,y)(y+v)$. 
In either case $[H]_3$ is contained in a principal ideal, contradicting Remark \ref{observ}(a).
\end{proof}

By \cite[Proposition~11]{En1}, either $K_1=(u,v^2)$ or $K_1=(u,v)^2+(au+bv)$ for forms $a,b$. The former case will be worked out at the very end of this subsection, while the latter one requires more work and will be addressed in the next two results.

\begin{lem}\label{111;112deg1}
Proposition~\ref{111;112} holds if $K_1=(u,v)^2+(au+bv)$ and $\deg(a)=\deg(b)=1$.
\end{lem}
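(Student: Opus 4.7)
The plan is as follows. Write $\p_1 = (u,v)$, $\p_2 = (x,y)$, $\p_3 = (z,w)$. By Lemma~\ref{help} we may assume $\h(u,v,x,y) = 4$ (the alternative, both $\h(u,v,x,y)=3$ and $\h(u,v,z,w)=3$, and the case where all cubics of $K$ lie in $H$, are handled there). Since $(u,v)^2 \subseteq K_1$, we may modify $a$ and $b$ modulo $(u,v)$ without changing $K_1$, so we may assume $a, b$ have no $u, v$ components while still $\h(u,v,a,b) = 4$. Lemma~\ref{ht6} gives $\h(u,v,x,y,z,w) \leq 5$, so after a linear change of coordinates we may arrange $w \in (u,v,x,y,z)$.

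Set $t = \h(u,v,x,y,z,w,a,b)$; by the above constraints $t \leq 7$. If $t \leq 5$, then every generator of $K$ lies in the $k$-subalgebra generated by a regular sequence of $t$ linear forms, so $K$ is extended from a polynomial subring in $t$ variables, and Proposition~\ref{<=t} immediately yields $\pd(R/I) \leq \max\{3,t\} \leq 5$. Thus the substantive work is the range $t \in \{6,7\}$.

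For $t \in \{6,7\}$, I would do explicit case analysis organized by which linear dependencies hold among $\{u,v,x,y,z,w,a,b\}$ (in particular, whether $w \in (u,v,x,y)$ or $w$ genuinely needs the variable $z$, and whether $a, b$ lie in $(x,y,z,w)$). In each configuration, I would choose a complete intersection $C$ of two cubics — with $C \subseteq K$ when $K = I^{un}$, and $C \subseteq I$ when $K = L$ — and compute the link $L' = C : K$ using Remark~\ref{observ}(b), Lemma~\ref{cubic}, and the modularity law, aiming to express $L'$ in canonical form. I would then verify $\pd(R/L') \leq 4$ by applying Lemma~\ref{ses}(1) with a judiciously chosen linear form $h \in \{u,v,z,w\}$, so that both $\pd(R/(L' : h))$ and $\pd(R/(L' + (h)))$ are at most $4$ (typically $\leq 3$). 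The conclusion $\pd(R/I) \leq 5$ then follows from Theorem~\ref{pd+1}(i) combined with Theorem~\ref{linkage}(c) when needed. In some subcases a shortcut may be available: if the choice of $C$ pushes two of the three cubic generators of $I$ into a principal ideal, then Corollary~\ref{x} closes the bound directly.

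The main obstacle will be the $t = 7$ case, where the span of the relevant linear forms is 7-dimensional and no single-variable elimination reduces the problem in one stroke. There one must carefully choose $C$ so that the resulting link $L'$ satisfies tight height constraints on its generators (in the spirit of Cases $2a$ and $2b$ of Proposition~\ref{deg2}), ensuring Lemma~\ref{ses} can close the argument. A secondary difficulty is keeping the two ambient scenarios $K = I^{un}$ and $K = L$ in sync: the complete intersection $C$ is chosen inside different ideals in the two cases, and the corresponding links are different, so each configuration must be treated twice.
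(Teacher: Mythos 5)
Your setup and the easy reductions are sound and consistent with the paper: modifying $a,b$ modulo $(u,v)$, invoking Lemma~\ref{help} and Lemma~\ref{ht6}, and dispatching the case $t=\h(u,v,x,y,z,w,a,b)\le 5$ via Proposition~\ref{<=t} all work. But for $t\in\{6,7\}$ --- which is where the entire content of the lemma lives --- you have written a plan (``I would do explicit case analysis\dots I would choose a complete intersection $C$\dots'') rather than a proof. No configuration is actually resolved, no complete intersection is exhibited, and no link is computed, so there is a genuine gap: the hard cases are deferred, not proved.

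Beyond incompleteness, your organizing principle diverges from the paper's in a way that would make the case analysis harder. The paper splits on the two heights $\h(u,v,a,b,z,w)$ and $\h(u,v,a,b,x,y)$ --- i.e., how $K_1$ interacts with \emph{each} of $\p_2,\p_3$ separately --- rather than on the single aggregate $t$; this is what lets it show, e.g., that when $\h(u,v,a,b,z,w)=6$ either $[K]_3\subseteq\big[(u,v)^2\cap(x,y)\cap(z,w)\big]_3$ (handled by Lemma~\ref{help}) or one is forced into $\h(x,y,u,v)=3$ with $v=x$, $y=a$, where a single explicit link $L'$ with $\pd(R/L')=3$ finishes. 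Your plan also misses two tools the paper leans on heavily: several configurations are eliminated not by bounding $\pd$ but by contradiction with Remark~\ref{observ}(a) (all cubics of $K$ would lie in a principal ideal), and in Case 2 the scenario $K=L$ is ruled out via Theorem~\ref{linkage}(d) (the ideal linked to $[K]_3$ has too few cubics to be $I^{un}$) --- so the ``keeping $K=I^{un}$ and $K=L$ in sync'' difficulty you flag is resolved by showing one branch is impossible, not by running the link computation twice. Finally, you never address $\h(x,y,z,w)=3$, which the paper reduces cleanly to Proposition~\ref{inters} by writing $(x,y)\cap(z,w)=(x,yz)$; this case is not covered by your appeal to Lemma~\ref{help}, which concerns $\h(u,v,x,y)$ and $\h(u,v,z,w)$.
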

\begin{proof}
Let $\p_1=(u,v)$, $\p_2=(x,y)$, $\p_3=(z,w)$, and $K_1=(u,v)^2+(au+bv)$ for linear forms $a,b$ with $\h(x,y,a,b)=4$.
Since $\p_2=(x,y)$ and $\p_3=(z,w)$ are distinct primes, we have that ${\rm ht}(x,y,z,w)\geq 3$. If ${\rm ht}(x,y,z,w)=3$, we may assume $(x,y)\cap (z,w)=(x,yz)$, hence $K=(x,yz)\cap K_1$, and the statement follows by Proposition~\ref{inters}.
Thus, we may assume ${\rm ht}(x,y,z,w)=4$. \\

\noindent \underline{Case 1: Assume ${\rm ht}(u,v,a,b,z,w)=6$.}\\
 Then $K=(x,y)\cap K_1(z,w)=(x,y)(z,w)\cap K_1(z,w)$. 

 If $(au+bv)(z,w)\cap \big[(x,y)(z,w)+(u,v)^2(z,w)\big]\subseteq \m(au+bv)(z,w)$ where $\m$ is the homogeneous maximal ideal of $R$, then 
\[[K]_3\subseteq\big[ (x,y)(z,w)+(u,v)^2(z,w)\big]_3 = \big[ (u,v)^2 \cap (x,y) \cap (z,w)\big]_3.\]
and the statement follows by Lemma \ref{help}.

 Then, after possibly replacing $z$ by a linear combination of $z$ and $w$ we may assume $(au+bv)z\in (x,y)(z,w)+(u,v)^2(z,w)$. After possibly modifying $a$ and $b$ modulo $(u,v)z$ we may further assume $(au+bv)z\in (x,y)(z,w)$, hence $au+bv\in (x,y)$. Observe that ${\rm ht}(x,y,u,v)\leq 3$. Indeed, if ${\rm ht}(x,y,u,v)=4$, then $(a,b)\subseteq (x,y)$, and since ${\rm ht}(u,v,a,b)=4$ and $a,b$ are linear forms, then $(a,b)=(x,y)$. Thus ${\rm ht}(u,v,z,w,x,y)=6$, which is ruled out by Lemma \ref{ht6}. Therefore, $au+bv\in (x,y)$ and ${\rm ht}(x,y,u,v)=3$.
Without loss of generality, from the height condition we may assume $v=x$, then $au\in (x,y)$, yielding $a\in (x,y)$. Since ${\rm ht}(u,v,a,b)=4$ and $v=x$, we have $a\notin (x)$, thus $(x,y)=(x,a)$, so after a change of coordinate we may further assume $y=a$. Then
\begin{align*}
K&=(x,y)\cap (z,w)\big((u,x)^2+(yu+bx)\big)\\
&=(xwu, xzu, x^2w, bxw + ywu, x^2z, bxz + yzu),
\end{align*}
where the last equality follows because ${\rm ht}(u,x,y,z,w,b)=6$. One checks that $\pd(R/K)=3$.  Moreover, if we set $L':=(x^2w,bxz+yzu):K=(xzw, yzw, x^2w, bxz + yzu, bx^3 + yx^2u)$, then $\pd(R/L') = 3$ also.  By Theorem \ref{pd+1}(i), we obtain $\pd(R/I)\leq4$.

The case ${\rm ht}(u,v,a,b,x,y)=6$ is argued similarly. Therefore, we may assume ${\rm ht}(u,v,a,b,z,w)\leq 5$ and ${\rm ht}(u,v,a,b,x,y)\leq 5$. Moreover, if any of these two heights is four, then ${\rm ht}(x,y,z,w,u,v,a,b)\leq 5$, and the statement follows by Proposition \ref{<=t}. \\

\noindent \underline{Case 2: Assume ${\rm ht}(u,v,a,b,z,w)={\rm ht}(u,v,a,b,x,y)=5$.}\\
 If $\h(u,v,z,w)=4$, then we may assume $b\in (u,v,z,w,a)$ and $\h(u,v,z,w,a)=5$. After possibly a change of the $u,v$ variables and clearing terms in $u,v$ using that $(u,v)^2\subseteq K_1$, we may further assume $b\in (z,w)$ and so we may take $b=w$. 
By Remark \ref{observ}(b) we have $K_1\cap (z,w)=K_1\cap \big[(z)+wJ\big]$, where $J=\big[K_1+(z)\big]:w$. Since $\h(u,v,z,w,a)=5$, the ideal $K_1+(z)$ is $(u,v,z)$-primary, and since $w\notin (u,v,z)$, it follows that $J=K_1+(z)$. So
$$K_1\cap (z,w)=K_1\cap \big[(z)+wK_1\big]=wK_1+\big[K_1\cap(z)\big]=wK_1 + zK_1$$
Then $[K]_3\subseteq [(w,z)K_1]_3$; since all cubics in $(w,z)K_1$ are written in terms of $u,v,a,b,z,w$ and ${\rm ht}(u,v,a,b,z,w)=5$, if $K = I^{un}$, then $\pd(R/I)\leq 5 $ by Remark \ref{observ}(c). If $K=L\sim I^{un}$, let $I'=(wu^2,zv^2):[K]_3=(wu^2, zv^2): (z,w)K_1$. Since $\h(u,v,z,w,a)=5$, one checks that $I'$ contains no quadrics and only two linearly independent cubics, and so does $I^{un}$, by Theorem \ref{linkage}(d). This is a contradiction. One argues similarly if ${\rm ht}(u,v,x,y)=4$. 

Then we may assume ${\rm ht}(u,v,z,w)={\rm ht}(u,v,x,y)=3$, and the statement follows by Lemma \ref{help}.
\end{proof}

\begin{lem}\label{111;112deg2}
Proposition~\ref{111;112} holds if $K_1=(u,v)^2+(au+bv)$ and $\deg(a)=\deg(b)\geq 2$.
\end{lem}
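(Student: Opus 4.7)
My plan mirrors the treatment of Proposition~\ref{deg2} (the $\deg a = \deg b \geq 2$ case of Proposition~\ref{inters}), with an extra prime in the intersection. First I would dispatch the trivial degree case: if $\deg a = \deg b \geq 3$, then $au+bv$ has degree $\geq 4$, so every cubic of $K$ lies in $H := (u,v)^2 \cap (x,y) \cap (z,w)$, and Lemma~\ref{help} closes the case. Assume henceforth $\deg a = \deg b = 2$, so $au+bv$ is itself a cubic. I would then apply Lemma~\ref{cubic}(i) with $K' = (u,v)^2$, $f = au+bv$, and outer ideal $(x,y) \cap (z,w)$: if $au+bv \notin (u,v)^2 + (x,y) \cap (z,w)$, then $[K]_3 = [H]_3$ and Lemma~\ref{help} again finishes; otherwise, Lemma~\ref{cubic}(ii) applied with $\p = (u,v)$ (using that $(x,y)\cap(z,w) \not\subseteq (u,v)$, which follows from $\h(\p_1+\p_j) \geq 3$) allows me to replace $a,b$ by equivalent quadrics (with $(u,v,a,b)$ unchanged) so that $au+bv \in (x,y) \cap (z,w)$. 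After this reduction one has the clean presentation
\[
K \;=\; (au+bv) + H, \qquad H = (u,v)^2 \cap (x,y) \cap (z,w).
\]

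Next I would use Lemma~\ref{ht6} to fix $\h(\p_1+\p_2+\p_3) \leq 5$, and invoke Lemma~\ref{help} to assume, after symmetry between $\p_2$ and $\p_3$, that $\h(u,v,x,y) = 4$ and that $au+bv \notin H$ (otherwise all cubics of $K$ lie in $H$). Since $(u,v)^2 \subseteq K$, I may freely reduce $a$ and $b$ modulo $(u,v)$, which does not alter $K$; combined with $au+bv \in (x,y)$ and $\h(u,v,a,b)=4$, this lets me write $au+bv = \alpha x + \beta y$ and $au+bv = \gamma z + \delta w$ for quadrics $\alpha,\beta,\gamma,\delta$ and constrains where the coefficients of $a,b$ can live.

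I would then split on $\h(u,v,z,w) \in \{3,4\}$ and on how many additional linear forms the quadrics $a,b$ (equivalently $\alpha,\beta,\gamma,\delta$) introduce. Whenever everything is extended from a polynomial ring in at most five linear forms, Proposition~\ref{<=t} immediately yields $\pd(R/I) \leq 5$. In the subcase $\h(u,v,z,w) = 3$, a coordinate change places a linear form of $(z,w)$ inside $(u,v)$ (say $z \in (u,v)$), and the relation $au+bv \in (z,w)$ typically forces either a quadric into $K$ (so Theorem~\ref{pd+1}(iii) applies) or lets one rewrite two generators of $I$ as multiples of a single linear form (so Corollary~\ref{x} applies). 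In the remaining genuinely generic subcases, I would construct an explicit complete intersection $C \subset K$ of two cubics (using, e.g., the monomials $u^2 x$ and $v^2 z$ or a variant chosen according to the height configuration), compute $L' := C : K$ from the explicit generators, and verify $\pd(R/L') \leq 4$ by peeling off a suitable linear form via Lemma~\ref{ses}; Theorem~\ref{linkage}(c) or Theorem~\ref{pd+1}(i) then gives $\pd(R/I) \leq 5$.

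The main obstacle I anticipate is the bookkeeping in the last step: the constraints $au+bv \in (x,y) \cap (z,w)$ and $\h(u,v,a,b) = 4$ do not by themselves pin down where the coefficients of the quadrics $a,b$ lie relative to the linear forms $u,v,x,y,z,w$, so several further subcases (indexed by whether $a,b \in (x,y,z,w)$, or require new linear forms, or fall inside smaller subideals) must be handled individually. For each such subcase the choice of complete intersection $C$ inside $K$ must be made carefully so that the link $L' = C:K$ is small enough—unmixed of the correct multiplicity and with a short free resolution—for the depth bound to go through.
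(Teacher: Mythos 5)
Your reduction to the normal form $K=(au+bv)+H$ with $au+bv\in(x,y)\cap(z,w)$ matches the paper's argument exactly (Lemma~\ref{help}, Lemma~\ref{cubic}(ii), Lemma~\ref{ht6}, and the case split on $\h(u,v,z,w)$ are all the right moves). But there is a genuine gap precisely at the point you flag as your ``main obstacle'': you assert that the constraints do not pin down where the coefficients of $a$ and $b$ live, and consequently you defer the endgame to an unspecified family of linkage computations. In fact the constraints \emph{do} pin them down, and this single observation is what makes the proof close. From $au+bv\in(x,y)$ one gets $au\in(x,y,v)$; since $\h(x,y,u,v)=4$ the linear prime $(x,y,v)$ does not contain $u$, so $a\in(x,y,v)$, and because $(u,v)^2\subseteq K$ you may modify $a$ modulo $v$ (this changes $au+bv$ only by an element of $(u,v)^2$) to get $a\in(x,y)$; symmetrically $b\in(x,y)$. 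When $\h(u,v,z,w)=4$ the same argument gives $a,b\in(z,w)$ as well, so $a,b\in(x,y)(z,w)$ are quadrics in $k[x,y,z,w]$ and everything is extended from at most five variables, closing that case by Proposition~\ref{<=t} --- your proposal gestures at this outcome but never establishes that it actually occurs.

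In the remaining case $\h(u,v,z,w)=3$ the paper takes $w=v$ (not $z\in(u,v)$), deduces $a\in(z,v)\cap(x,y)=(z,v)(x,y)$, and arrives at the explicit presentation $K=(au+bv)+v(u,v)(x,y)+u^2(x,y)(z,v)$. The endgame is then \emph{not} a link computation: for $K=I^{un}$ one reads off $[K]_3=(au+bv)+v(u,v)(x,y)$, so two generators of $I$ are multiples of $v$ and Corollary~\ref{x} applies; for $K=L$ one computes $K:v=(u,v)\cap(x,y)$ and $K+(v)$ directly and applies Lemma~\ref{ses}(1). Your proposed alternative --- choosing a complete intersection $C\subseteq K$ of two cubics and showing $\pd(R/(C:K))\le 4$ --- is not obviously wrong, but you give no candidate for $C$ and no argument that the resulting link is unmixed of the right multiplicity, so as written the final subcases are not proved. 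The fix is to import the coefficient-location step above, after which the feared proliferation of subcases collapses to the two handled in the paper.
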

\begin{proof}
If $K$ contains a quadric the statement follows by Theorem~\ref{pd+1}(iii). Hence we may assume $K$ is generated in degree three and higher. Also, ${\rm ht}(u,v,x,y,z,w)\leq 5$ by Lemma \ref{ht6}. As in the proof of Lemma~\ref{111;112deg1}, by Proposition~\ref{inters} we may assume ${\rm ht}(x,y,z,w)=4$, then, by Lemma \ref{help}, we may also assume $\h(u,v,x,y)=4$ and 
$[K]_3 \neq \big[  (u,v)^2\cap (x,y) \cap (z,w) \big]_3$. In particular, we have $\deg(a) = \deg(b) = 2$ and $au+bv\in \big((x,y)\cap (z,w)\big)+(u,v)^2$, and hence by Lemma \ref{cubic}(ii) we may assume $au+bv\in (x,y)(z,w)$ and, by the modularity law, 
\[K=(au+bv)+\big[(u,v)^2\cap (x,y)\cap (z,w)\big].\]
Moreover, observe that $au\in (x,y,v)$ and since $\h(x,y,u,v)=4$, then $a\in (x,y,v)$. After clearing the term in $v$, we may further assume $a\in (x,y)$. By symmetry, $b\in (x,y)$. 
Now, if $\h(u,v,z,w)=4$, the same argument yields $a,b\subseteq (z,w)$, then $a,b\subseteq (x,y)\cap (z,w)=(x,y)(z,w)$, and since 
$a,b$ are quadrics, then all generators of $K$ lie in $k[u,v,x,y, z,w]$ and since $\h(u,v,x,y,z,w)\leq 5$, then $\pd(R/I)\leq 5$ by Proposition \ref{<=t}. 

We may then assume ${\rm ht}(u,v,z,w)=3$, and after a change of coordinates we may take $w=v$ and $\h(u,v,z)=3$. Note that $\h(x,y,z,v)=4$, because $v=w$. Since $au+bv\in (z,v)$, it follows that $au\in (z,v)$ and so $a\in (z,v)$. Then $a\in (z,v)\cap (x,y)=(z,v)(x,y)$. 
Then 
\begin{align*}
K&=(au+bv)+\big[(u,v)^2\cap (x,y)\cap (z,v)\big]\\
&=(au+bv) + v(u,v)(x,y)+\big[u^2(x,y)\cap (z,v)\big].
\end{align*} 
Since $\h(u,v,x,y)=4$, we have $u^2(x,y)\cap (z,v)=(u^2) \cap (x,y)\cap (z,v)=u^2 J$, where $J=\left((x,y)\cap(z,v)\right):u^2$. Since $\h(u,x,y)=\h(u,z,v)=3$ and $\h(x,y,z,v)=4$, we have $J=(x,y)\cap (z,v)=(x,y)(z,v)$, therefore
\[ K = (au+bv)+v(u,v)(x,y)+u^2(x,y)(z,v).\]
%
Since $[K]_3=(au+bv) + v(u,v)(x,y)$, if $K=I^{un}$, then after possibly taking linear combinations, we may assume two of the generators of $I$ are multiple of $v$, and so $\pd(R/I)\leq 4$ by Corollary \ref{x}. 
Assume then $K=L\sim I^{un}$. 
The primary decomposition of $K$ and the fact that $v=w$ yields that $K:v=(u,v)\cap (x,y)$, whence $\pd(R/(K:v))=3$. On the other hand,
$K+(v)=(v, u(a, uzx, uzy))$ and one sees $\pd(R/K+(v))\leq 4$. 
Then, by Lemma \ref{ses}, $\pd(R/K)\leq 4$, and then $\pd(R/I)\leq 5$ by Theorem \ref{pd+1}(i).
\end{proof}

{\bf Proof of Proposition~\ref{111;112}.} If $K_1$ does not contain a linear form we invoke Lemmas~\ref{111;112deg1} and \ref{111;112deg2}. If $K_1$ contains a linear form we may assume $K=(u,v^2)\cap (x,y)\cap (z,w)$. Note that ${\rm ht}(x,y,u,v,z,w)\leq 5$ by Lemma \ref{ht6}, then Proposition \ref{<=t} yields $\pd(R/I)\leq 5$. 
\QED
\bigskip

\subsection{Type $\boldsymbol{\langle 1,1,1,1;1,1,1,1\rangle}$}

In this subsection we cover the case when $I^{un}$ or $L = (f_1,f_2):I$ is the intersection of four distinct linear primes.

\begin{lem}\label{p_i}
Let $\p_1,\p_2,\p_3$ be distinct linear primes of height $2$, and assume ${\rm ht}(\p_1+\p_2+\p_3)\geq 5$. If ${\rm ht}(\p_i+\p_j)=4$ for all $i\neq j$ then $\p_1\cap \p_2\cap \p_3=\p_1\p_2\p_3$.
\end{lem}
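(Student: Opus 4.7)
The inclusion $\p_1\p_2\p_3 \subseteq \p_1\cap \p_2 \cap \p_3$ is automatic, so the plan is to prove the reverse containment by fixing adapted coordinates and peeling off one prime at a time. Since $\h(\p_1+\p_2)=4$, I may choose $\p_1 = (x_1,x_2)$ and $\p_2 = (x_3,x_4)$ with $x_1,x_2,x_3,x_4$ a regular sequence in $R$. Regularity of $x_3,x_4$ on $R/\p_1$ gives $\tor_1^R(R/\p_1,R/\p_2)=0$, whence $\p_1\cap \p_2 = \p_1\p_2$; the same Tor-vanishing persists after quotienting by any linear form independent from $x_1,\ldots,x_4$.

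The hypothesis $\h(\p_1+\p_2+\p_3)\geq 5$ together with $\h(\p_3)=2$ leaves two subcases. When $\h(\p_1+\p_2+\p_3)=6$, I may write $\p_3 = (y_1,y_2)$ with $x_1,\ldots,x_4,y_1,y_2$ a regular sequence; all three primes are then generated by pairwise disjoint parts of a regular sequence, and the Tor-vanishing argument applied two more times yields $\p_1\cap \p_2\cap \p_3 = \p_1\p_2\p_3$ at once. When $\h(\p_1+\p_2+\p_3)=5$, write $\p_3 = (y_1,y)$ with $y\notin (x_1,\ldots,x_4)$ and $y_1 = \alpha_1 x_1+\alpha_2 x_2+\alpha_3 x_3+\alpha_4 x_4$ modulo $y$. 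The conditions $\h(\p_1+\p_3) = \h(\p_2+\p_3)=4$ force $(\alpha_3,\alpha_4)\neq(0,0)$ and $(\alpha_1,\alpha_2)\neq (0,0)$, since otherwise $y_1$ would lie in $\p_i+(y)$ for some $i$, collapsing the height of $\p_i+\p_3$. A linear change of basis inside $\p_1$ and inside $\p_2$ then lets me take $\p_3 = (x_1+x_3, y)$ with $x_1,x_2,x_3,x_4,y$ a regular sequence.

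The core step is to prove $(\p_1\p_2)\cap \p_3 = \p_1\p_2\p_3$ in this coordinate setup. Take $f \in \p_3 \cap \p_1\p_2$ and write $f = a(x_1+x_3)+by$. Since $y\notin \p_1 \cup \p_2$, the form $y$ is regular on $R/\p_1\p_2$; reducing modulo $y$ gives $\bar a (\bar x_1+\bar x_3) \in \bar\p_1\bar\p_2$ in $\bar R := R/(y)$, where the equality $\bar\p_1\cap \bar\p_2 = \bar\p_1\bar\p_2$ holds by the same Tor argument applied in $\bar R$. The hypothesis $\h(\p_i+\p_3)=4$ implies $x_1+x_3\notin \p_i+(y)$, so $\bar x_1+\bar x_3$ avoids both associated primes of $\bar R/\bar\p_1\bar\p_2$ and is therefore regular on it. Hence $\bar a \in \bar\p_1\bar\p_2$, that is, $a = a'+yc$ with $a'\in \p_1\p_2$. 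Substituting back, $f - a'(x_1+x_3) = y\bigl(b+c(x_1+x_3)\bigr)$ lies in $\p_1\p_2$; regularity of $y$ forces $b+c(x_1+x_3)\in \p_1\p_2$, and combining the two pieces gives $f\in a'\p_3 + y\cdot \p_1\p_2 \subseteq \p_1\p_2\p_3$.

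The main technical obstacle will be the simultaneous regularity bookkeeping for $y$ and for $x_1+x_3$ on several quotients of $R$ that appear in the argument. Once the coordinates have been chosen as in the second paragraph, however, each needed regularity is a direct consequence of one of the height hypotheses, so essentially all combinatorial content of the lemma is absorbed into the coordinate reduction.
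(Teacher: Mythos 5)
Your argument is correct and follows essentially the same route as the paper's: after the identical coordinate normalization ($\p_3=(x_1+x_3,y)$ with $y$ outside the span of $\p_1+\p_2$), your element chase uses exactly the two regularity facts the paper encodes via Remark~\ref{observ}(b) and the modularity law, namely that $y$ is regular on $R/\p_1\p_2$ and that $x_1+x_3$ is regular on $R/\bigl(\p_1\p_2+(y)\bigr)$. The Tor-vanishing packaging of $\p_1\cap\p_2=\p_1\p_2$ is only a cosmetic difference.
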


\begin{proof}
The statement is clear if ${\rm ht}(\p_1+\p_2+\p_3)=6$, hence we may assume ${\rm ht}(\p_1+\p_2+\p_3)=5$. Let $\p_1=(x,y)$, $\p_2=(z,w)$, and $\p_3=(u,v)$. We may assume $\h(x,y,z,w,u)=5$ and $v\in (x,y,z,w,u)$. Since $\p_3=(u,v)$, we may assume further assume $v\in (x,y,z,w)$. Since $\h(\p_1+\p_3)=\h(\p_2+\p_3)=4$, we have $v=x'+z'$ for some $0\neq x'\in (x,y)$ and $0\neq z'\in (z,w)$, thus after possibly choosing different minimal generators for $\p_1$ and $\p_2$ we may assume $v=x+z$. Since $\h(x,y,z,w,u)=5$, if we set $H:=\p_1\p_2 + (u)=\big(\p_1+(u)\big)\cap \big(\p_2+(u)\big)$, then $x+z$ is regular on $R/\big(\p_i+(u)\big)$ for $i=1,2$. Thus $H:(x+z)=H$ and hence 
\begin{align*}
\p_1\cap \p_2\cap \p_3 &=\p_1\p_2\cap (u,x+z) & \text{ since } \h(\p_1+\p_2) = 4 \\
&=\p_1\p_2\cap \left[(u) + (x+z)H\right]& \text{ by Remark~\ref{observ}(b)}\\
&=\p_1\p_2\cap \left[(u) + (x+z)\p_1\p_2\right] \\
&=\left[\p_1\p_2 \cap (u) \right] + (x+z)\p_1\p_2&\text{ by the modularity law}\\
&=(u)\p_1\p_2  + (x+z)\p_1\p_2 & \text{ since } \h(\p_1 + \p_2 + (u)) = 5\\
&=\p_1\p_2\p_3.
\end{align*}
\end{proof}

\begin{prop}\label{1111;1111}
Let $K=\p_1\cap \p_2\cap \p_3\cap \p_4$, where the $\p_i$ are distinct linear primes of height $2$. If either $K=I^{un}$ or $K=L$, then $\pd(R/I)\leq 5$.
\end{prop}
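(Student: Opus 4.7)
The plan is to proceed by exhausting the possible heights of the pairwise and total sums of the four primes. First I would make the reduction: if some $\h(\p_i+\p_j)=3$, the two primes share a linear form $\ell$, so $\p_i\cap\p_j$ contains a quadric (namely $\ell\cdot m$ for a suitable linear form $m$ in one of the $\p_i$), and hence $K$ contains a quadric; Theorem~\ref{pd+1}(iii) then yields $\pd(R/I)\le 4$. From this point on I may assume
\[\h(\p_i+\p_j)=4\quad\text{for every }i\neq j,\]
which, by the modularity law, forces $\p_i\cap\p_j=\p_i\p_j$ for every pair.

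Set $h=\h(\p_1+\p_2+\p_3+\p_4)$. Next I would rule out $h\geq 7$. Iterating Lemma~\ref{p_i} on the triples and then reducing the full intersection via Remark~\ref{observ}(b) collapses $K$ to the product $\p_1\p_2\p_3\p_4$, which is generated in degree $\geq 4$. Since $K$ must contain three linearly independent cubics (by Remark~\ref{observ}(a) when $K=I^{un}$, and by Theorem~\ref{linkage}(d), which preserves the Hilbert function, when $K=L$), this is a contradiction. Hence $h\leq 6$. If $h\leq 5$, after a change of coordinates $K$ is extended from a polynomial subring in at most $5$ variables, so Proposition~\ref{<=t} gives $\pd(R/I)\leq 5$ at once.

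The essential case is $h=6$. Here I would choose coordinates so that $\p_1=(x_1,x_2)$, $\p_2=(x_3,x_4)$, $\p_3=(x_5,x_6)$, and write $\p_4=(a,b)$ with $a,b\in(x_1,\ldots,x_6)$, linearly independent modulo each $\p_i$ (the latter being the condition $\h(\p_4+\p_i)=4$). By Lemma~\ref{p_i}, $\p_1\cap\p_2\cap\p_3=\p_1\p_2\p_3$, so $K=\p_1\p_2\p_3\cap\p_4$. Applying Remark~\ref{observ}(b) to this intersection produces an explicit presentation of $K$. After normalising $a$ and $b$ by coordinate changes inside each $\p_i$ (replacing generators to annihilate as many cross-terms as possible), I would reduce to a short list of canonical forms for $\p_4$. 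For each canonical form I would exhibit a complete intersection $C$ of two cubics inside $K$, compute a direct link $L'=C:K$, and bound $\pd(R/L')$ by a standard application of Lemma~\ref{ses}(1) to the pair $(L':\ell,L'+(\ell))$ for a suitable linear form $\ell$. Theorem~\ref{linkage}(c) (or Theorem~\ref{pd+1}(i) when $K=I^{un}$) then gives $\pd(R/I)\leq 5$.

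The main obstacle will be the case $h=6$: the many distinct ways the coefficients of $a$ and $b$ can distribute among the pairs $\{x_{2j-1},x_{2j}\}$ produce several canonical forms for $\p_4$, each requiring a separate explicit link computation. The unifying theme, however, is always the same — a controlled application of the Depth Lemma to the link, followed by Theorem~\ref{pd+1}(i). I anticipate that one subcase (for instance, when $a$ and $b$ each have nonzero components in all three pairs) will be the most delicate, but after a generic coordinate change it should reduce to verifying that an explicit squarefree or near-squarefree ideal $L'$ has $\pd(R/L')\leq 4$, which can be done by direct Koszul-type calculation.
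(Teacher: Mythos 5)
Your opening reduction contains a genuine error. When $\h(\p_i+\p_j)=3$, say $\p_i=(z,v)$ and $\p_j=(z,w)$, the intersection $\p_i\cap\p_j=(z,vw)$ does contain the quadric $vw$, but $K$ is the intersection of all \emph{four} primes, and $vw$ has no reason to lie in the other two; so the conclusion ``hence $K$ contains a quadric'' does not follow, and Theorem~\ref{pd+1}(iii) is not available. The correct move is to observe that $K$ is then an intersection of two linear primes with the ideal $(z,q)$ for the quadric $q=vw$, which is exactly the situation of Proposition~\ref{112;111}; that proposition was stated precisely to absorb this subcase.

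The larger problem is the main case. You correctly reduce to $\h(\p_i+\p_j)=4$ for all pairs and $\h(\p_1+\p_2+\p_3+\p_4)\ge 6$, but your plan there --- normalize $\p_4=(a,b)$, list canonical forms, and for each one exhibit a complete intersection of two cubics in $K$ and compute a link --- is both unexecuted and unexecutable: under these height restrictions \emph{no} admissible configuration exists. A case analysis on $\h(\p_i+\p_j+\p_k)$ shows that if some triple sums to height $4$ then all cubics of $K$ are multiples of a single quadric times a linear form; if all triples sum to height $6$ then $K$ contains no cubics at all; and in the remaining case (some triple of height $5$), Lemma~\ref{p_i} together with Remark~\ref{observ}(b) collapses $K$ to $\p_1\p_2\p_3\p_4$, generated in degree $4$. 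Each outcome contradicts the fact that $K$ (being $I^{un}$ or, via Theorem~\ref{linkage}(d), having the same Hilbert function as $I^{un}$) must contain three linearly independent cubics generating a height-two ideal. So there are no cubic regular sequences to link by and no list of canonical forms to work through; the proof of this case is a contradiction, not a computation. Your proposal anticipates ``the most delicate subcase'' where none exists, and leaves the actual argument --- the triple-height trichotomy and the resulting impossibility --- unsupplied.
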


\begin{proof}
By Theorem~\ref{pd+1}(iii) we may assume $K$ contains no quadrics. Also, if ${\rm ht}(\p_1+\p_2+\p_3+\p_4)\leq 5$, the statement follows by Proposition \ref{<=t}. We may then assume ${\rm ht}(\p_1+\p_2+\p_3+\p_4)\geq 6$.
Since the primes are distinct, we have ${\rm ht}(\p_i+\p_j)\geq 3$  for every $i\neq j$. If ${\rm ht}(\p_i+\p_j)=3$ for some $i\neq j$, then we are in the assumptions of Proposition~\ref{112;111}. We may then assume ${\rm ht}(\p_i+\p_j)=4$ for every $i\neq j$.
In the rest of the proof we show it is impossible that $\h(\p_1+\p_2+\p_3+\p_4)\geq 6$ and ${\rm ht}(\p_i+\p_j)=4$ for every $i\neq j$.

Suppose $\h(\p_i+\p_j+\p_k) = 4$ for some distinct $i,j,k$.  Then given the height restrictions, after a linear change of variables $K$ must have the form
\[K = (x,y) \cap (w,z) \cap (x+w,y+z) \cap (a,b)\]
for distinct linear forms $w,y,w,z,a,b$.  One checks that all cubics in $K$ are multiples of $wy - xz$, which is impossible.  

If $\h(\p_i + \p_j + \p_k) = 6$ for all $i,j,k$, then after a linear change of variables $K$ must have one of the forms:
\begin{align*}
 & (x,y) \cap (w,z) \cap (a,b) \cap (x+w+a,y+z+b) & \text {if } \h(\p_1 + \p_2 + \p_3 + \p_4) = 6,\\
 & (x,y) \cap (w,z) \cap (a,b) \cap (x+w+a,u)& \text {if } \h(\p_1 + \p_2 + \p_3 + \p_4) = 7,\\
& (x,y) \cap (w,z) \cap (a,b) \cap (u,v)& \text {if } \h(\p_1 + \p_2 + \p_3 + \p_4) = 8,
 \end{align*}
 where all variables are distinct linear forms.  In all three cases $K$ contains no cubics, which is impossible.

Therefore we may assume, in addition to the previous height restrictions, that $\h(\p_1 + \p_2 + \p_3) = 5$, $\p_4 = (a,b)$ and $a \notin \p_1 + \p_2 + \p_3$.  Then $K_1 :=\p_1\cap \p_2\cap \p_3 =\p_1\p_2\p_3$ by Lemma \ref{p_i}.  Since $a \notin \p_1 + \p_2 + \p_3$, it follows that $K_1 + (a) = \p_1\p_2\p_3 + (a) = \bigcap_{i = 1}^3 \big(\p_i + (a)\big)$.  
Then 
 
 \begin{align*}
 K &= \bigcap_{i = 1}^4 \p_i\\
 &= \p_1\p_2\p_3 \cap (a,b) &  \text{since } \bigcap_{i = 1}^3 \p_i = \p_1\p_2\p_3\\
 &= \p_1\p_2\p_3 \cap \big[(a) + b\big(\p_1\p_2\p_3 + (a)\big):b\big] &\text{ by Remark~\ref{observ}(b)}\\
 &= \p_1\p_2\p_3 \cap \left[(a) + b\left( \bigcap_{i = 1}^3 (\p_i + (a)):b\right)\right]& \text{ by above}\\
 &=  \p_1\p_2\p_3 \cap \left[(a) + b\left( \bigcap_{i = 1}^3 (\p_i + (a))\right)\right]& \text{ since } \h(\p_i + (a,b)) = 4\\
  &=  \p_1\p_2\p_3 \cap \big[(a) + b\left( \p_1\p_2\p_3 + (a)\right)\big] & \text{ by above again}\\
    &=  \p_1\p_2\p_3 \cap \big[(a) + b \p_1\p_2\p_3 \big]\\
    &= a\p_1\p_2\p_3 + b\p_1\p_2\p_3\\
        &=\p_1\p_2\p_3\p_4,
\end{align*}
which is generated in degree $4$ and again impossible.  This completes the proof.
\end{proof}

\bigskip

\subsection{Type $\boldsymbol{\langle 1,1;1,3\rangle}$}
We prove the case where $I^{un}$ or the link $L$ is of type $\langle 1,1;1,3\rangle$ in the next two results.

\begin{prop}\label{L11;13}
If $L$ 
is of type $\langle 1,1;1,3\rangle$, then $\pd(R/I) \le 5$.
\end{prop}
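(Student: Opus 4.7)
The plan is to reduce to showing $\pd(R/L) \le 4$, since by Theorem \ref{pd+1}(i) this gives $\pd(R/I) \le 5$. Write $L = \mathfrak{p}_1 \cap K_2$, where $\mathfrak{p}_1 = (u,v)$ is a linear prime and $K_2$ is a $(x,y)$-primary ideal of multiplicity three. Since $e(R/I) = 5$, Theorem \ref{pd+1}(ii)--(iii) immediately disposes of the case where $L$ contains a linear form or a quadric. In particular, $K_2$ itself cannot contain a linear form $\ell$, since otherwise $u\ell$ and $v\ell$ would be quadrics in $L$, and any quadric $q \in K_2 \cap \mathfrak{p}_1$ would violate the no-quadric reduction directly.

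Next, I would invoke the structure theorems from \cite{MM} for a $(x,y)$-primary ideal of multiplicity three containing no linear form. The cleanest case is $K_2 = (x,y)^2$: here the no-quadric hypothesis on $L$ forces $\h(u,v,x,y) = 4$, and then $L = (u,v)(x,y)^2$ is a product ideal with a well-known Eagon-Northcott-style resolution giving $\pd(R/L) = 3$. In the remaining cases, $K_2$ is of the form $(x,y)^3 + (q_1,q_2)$ or $(x,y)^3 + (q) + (h)$ with $q,q_i$ quadrics in $(x,y)^2$ and $h$ a cubic of restricted shape, the precise list being dictated by the requirement $\lambda(R_{(x,y)}/K_2 R_{(x,y)}) = 3$.

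In each remaining case, I would mirror the strategy of Propositions \ref{inters}, \ref{11;22}, and \ref{111;112}: use Remark \ref{observ}(b) to rewrite $L$ via a colon computation $J := (\mathfrak{p}_1 + K_2):h$ for a suitable generator $h$ of $K_2$, split on the height of the set of linear forms involved, and either (a) invoke Proposition \ref{<=t} when the generators of $L$ are visibly supported on five or fewer independent linear forms, or (b) exhibit an explicit complete intersection $C = (c_1, c_2) \subseteq L$ of two cubics and verify by direct calculation that the link $L' := C:L$ is unmixed of the expected multiplicity with $\pd(R/L') \le 3$. When neither route applies, the fallback is Lemma \ref{ses}(1) with $h$ a well-chosen linear form (typically $h \in \mathfrak{p}_2$ or $h = u$) so that both $L:h$ and $L + (h)$ become tractable ideals of small projective dimension; this mirrors the linkage-plus-short-exact-sequence pattern used in Lemma \ref{l_i}.

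The main obstacle I anticipate is the subcase where $K_2$ has a genuinely cubic generator of the form $h = cx + dy$ with $c,d$ quadrics, and simultaneously $\h(u,v,x,y) = 4$: here $L$ is not extended from a polynomial ring in few variables, the colon $(\mathfrak{p}_1 + K_2):h$ must be computed explicitly, and the candidate link $L'$ requires a further case split on whether $cu + dv$ reduces modulo $\mathfrak{p}_1 + (x,y)^3$. This parallels the technical core of Lemma \ref{lin2} and Proposition \ref{deg2}, with an additional bookkeeping step arising from the multiplicity-three structure of $K_2$, specifically the need to track the extra socle generator of $K_2 R_{(x,y)}$ modulo its quadric part.
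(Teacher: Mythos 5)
Your framing is the same as the paper's: strip off the cases where $L$ contains a linear form or a quadric via Theorem~\ref{pd+1}(ii)--(iii) (valid here since $e(R/I)=9-e(R/L)=5$), then run a case analysis over the classification of $(x,y)$-primary ideals of multiplicity three from \cite{MM}, using Proposition~\ref{<=t}, explicit links, and Lemma~\ref{ses} as the toolkit. However, there are two genuine gaps. First, your guessed list of cases is wrong: the classification in \cite[Theorem~2.1]{MM} is not ``$(x,y)^2$, $(x,y)^3+(q_1,q_2)$, or $(x,y)^3+(q)+(h)$,'' but eight specific forms, including $(x^2,xy,y^3,ax+by^2)$ and $(x,y)^3+(x,y)(ax+by)+\big(c(ax+by)+dx^2+exy+fy^2\big)$ with height conditions such as $\h(x,y,c,b^2d-abe+a^2f)=4$; the entire difficulty of the proposition lives in these forms, and the ``restricted shapes'' you defer to are exactly what determines whether your routes (a) and (b) apply. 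Relatedly, your toolkit omits the argument the paper leans on most heavily to kill subcases: Remark~\ref{observ}(a) (the cubics of $L$ generate a height-two ideal), which is what rules out configurations like $\h(x,y,u,v,a,b)=6$ in the case $J=(x,y)^3+(ax+by)$, where all cubics of $L$ would lie in the principal ideal $(ax+by)$. Without that device, several subcases in your scheme have no exit, because they are not to be bounded but shown to be impossible.

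Second, and more seriously, one of the eight forms is $J=(x,y)^3+(x,y)(ax+by)+[J]_{\ge 4}$ with the degree-$\ge 4$ part completely unspecified. Here none of your three routes works: $L$ is not visibly extended from few variables (because of $[J]_{\ge 4}$), you cannot ``verify by direct calculation'' a link $C:L$ whose generators you do not know, and $L:h$ and $L+(h)$ for a linear form $h$ again involve the unknown high-degree part. The paper's resolution is of a different nature: it isolates the cubic part $L'$ of $L$, forms the link $H'=(f_1,f_2):L'$, and uses Theorem~\ref{linkage}(d) (equality of Hilbert functions under links by complete intersections of matching degrees) to conclude that the third generator $f_3$ of $I$ itself has the form $\ell q+F_3$ with $q,F_3\in k[x,y,a,b]$, so that $I$ -- not $L$ -- is extended from a five-variable subring. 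Controlling $I$ directly through a second link is an idea absent from your proposal and, as far as I can see, indispensable in that case. The skeleton of your plan is sound, but as written it would stall at precisely the cases that make the proposition nontrivial.
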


\begin{proof} We have $L = J \cap (u,v)$, where $J$ is $(x,y)$-primary for independent linear forms $x$ and $y$ and has one of the eight forms from \cite[Theorem~2.1]{MM}, and $u,v$ are independent linear forms.\\
\\
\noindent\underline{Cases (i), (ii), (iii): $J = (x,y)^2, (x,y^3)$, or $(x^2, xy, ax+y^2)$.}\\
In all three cases, all minimal generators of $J \cap (u,v)$ can be written in terms of at most $5$ linear forms. By Lemma \ref{<=t}, $\pd(R/I) \le 5$.\\

\noindent\underline{Case (iv): $J = (x^2, xy, y^3, ax + by^2)$, where $\h(x,y,a,b) = 4$.}\\
If $\h(x,y,u,v)<4$, then $L$ contains a quadric and so $\pd(R/I)\leq 4$ by Theorem \ref{pd+1}(iii). We may then assume $\h(x,y,u,v)=4$, and in particular $(x^2,xy,y^3)\cap (u,v)=(x^2,xy,y^3)(u,v)$.
Set $f=ax+by^2$. If $\deg(f)=2$, then $\deg(a) =1$ and $b \in K$, so 
$L=L'R$ where $L'$ is an ideal in $R'=k[x,y,a,u,v]$. By Proposition \ref{<=t} we obtain $\pd(R/I)\leq 5$. 
We may then assume $\deg(f)\geq 3$.
If $f\notin (x^2, xy, y^3)+(u,v)$, setting $J=(x^2,xy,y^3,u,v):f$ one obtains that $fJ$ is generated in degree at least four; then by Remark \ref{observ}(b) and the above all cubics of $L$ are contained in $(x^2,xy,y^3)(u,v)$ and so $[L]_3\subseteq (x)$, which contradicts Remark \ref{observ}(a).
We may then assume $f\in (x^2, xy, y^3)+(u,v)$, and a proof similar to Lemma \ref{cubic}(ii) yields that we may assume $f\in (u,v)$. Then $J + (u,v)=(x^2,xy,y^3,u,v)$, thus $\pd(R/J+(u,v))=4$. Since $\pd(R/J)\leq 4$ by \cite[Theorem~2.1]{MM} and $\pd(R/(u,v)) = 2$, Lemma \ref{ses}(2) yields $\pd(R/L)\leq 4$ and then $\pd(R/I)\leq 5$ by Theorem \ref{pd+1}(i).\\

\noindent\underline{Case (v): $J = (x,y)^3 + (ax + by)$, where $\h(x,y,a,b) = 4$.}\\
If $\h(x,y,u,v) = 3$ we may further assume  $u = x$.  In this case we observe that $\pd(R/J+(u,v))=\pd(R/(x, y^3, by, v))\leq 4$ and since $\pd(R/J)\leq 3$, 
  by Lemma \ref{ses}(2) we obtain $\pd(R/L) \le 3$ and hence $\pd(R/I) \le 4$ by Theorem \ref{pd+1}(i). 
  
  If $\h(x,y,u,v) = 4$, then $(x,y)^3\cap (u,v)=(x,y)^3(u,v)$. When $\deg(a)=\deg(b)\geq 2$, by Lemma \ref{cubic}(i) all cubics in $L$ lie in a principal ideal, contradicting Remark \ref{observ}(a). When $\deg(a)=\deg(b)=1$, if $\h(x,y,u,v,a,b)\leq 5$ the statement follows by Proposition \ref{<=t}, while if $\h(x,y,u,v,a,b) = 6$, then $[L]_3\subseteq (ax+by)$, contradicting Remark \ref{observ}(a). \\


\noindent\underline{Case (vi): $J = \left(x,y\right)^3 + (x,y)(ax+by) + \left(c(ax+by)+dx^2+exy+fy^2\right)$,}\\
\underline{where $\h(x,y,a,b)= 4$ and  $\h(x,y,c,b^2d-abe+a^2f)=4$.}\\
Notice that $a,b,c,d,e,f$ are linear forms, since otherwise $[L]_3=[(x,y)^3\cap (u,v)]_3$ and so either $L$ contains no cubics (if $\h(x,y,u,v)=4$) or all cubics in $L$ are contained in a principal ideal (if $\h(x,y,u,v)=3$), contradicting in either case Remark \ref{observ}(a). 

Assume $\h(x,y,u,v) = 3$, in which case we may take $u = x$.  Observe that $J + (u,v) = (x, v, y(y^2, by, bc+fy))$ can be written in terms of at most $6$ variables.  If $\h(x,y,v,b,c,f) = 6$, then one checks that $\pd(R/(J + (u,v))) = 4$ (e.g. by iterated applications of Lemma \ref{ses}(1)).  If $\h(x,y,v,b,c,f) \le 5$, then $J+(u,v)$ is extended from a polynomial ring in at most 5 variables, thus $\pd(R/(J + (u,v))) \le 5$. In either case, since $\pd(R/(u,v))=2$ and $\pd(R/J)\leq 4$, by Lemma \ref{ses}(2) we have $\pd(R/L) \le 4$, so Theorem \ref{pd+1}(i) yields $\pd(R/I) \le 5$.

We may then assume $\h(x,y,u,v)=4$. We claim $\h(x,y,u,v,a,b)\leq 4$. To the contrary assume $\h(x,y,u,v,a)=5$; we show that $H=(u,v)\cap \left[(x,y)^3+ (x,y)(ax+by)\right]$ contains no cubics. If $0\neq s\in H$ is a cubic, then $s=F+ x' (ax+by)$ for some $F\in (x,y)^3$ and some linear form $0\neq x'\in (x,y)$. After possibly a change of variables, we may assume $x'=x$. Since $H\subseteq (u,v)\cap (x,y)^2=(u,v)(x,y)^2$, we have $s\in (u,v)(x,y)^2$ and so  
 $x(ax+by)\in (x,y)^2(x,y,u,v)$. Then $ax^2\in (xy,y^2,x^3,x^2u,x^2v)$, yielding $a\in (x,y,u,v)$, which is a contradiction. Thus $H$ contains no cubics, so by Lemma \ref{cubic}(i) the ideal $L$ contains at most one cubic, contradicting Remark \ref{observ}. 

We may then assume $\h(x,y,u,v,a,b)=4$, so $(a,b)\subseteq (x,y,u,v)$, and since $(x,y)^2\subseteq J$, we may assume $(a,b)\subseteq (u,v)$. Since $\h(x,y,a,b)=4$, without loss of generality we may assume $a=u$ and $b=v$, so $L=(ax+by)(x,y)+[(a,b)\cap ((x,y)^3+(s))]$, where $s=c(ax+by)+dx^2+exy+fy^2$.  
If $s\notin (a,b)+(x,y)^3$, then Remark \ref{observ}(b) implies that all cubics of $L$ are contained in $(ax+by)(x,y)+[(a,b)\cap (x,y)^3]$. Since $[(a,b)\cap (x,y)^3]=(a,b)(x,y)^3$ is generated in degree 4, the cubics of $L$ are all contained in $(ax+by)(x,y)$, contradicting Remark \ref{observ}(a). Therefore, $s\in (a,b) + (x,y)^3$ and hence $s':=dx^2+exy+fy^2\in (a,b)+(x,y)^3$. Since $(x,y)^3\subseteq J$, after possibly clearing from $d,e,f$ all terms in $x$ and $y$, we may assume $s'\in (a,b)$ and thus $s'\in (a,b)\cap (x,y)^2=(a,b)(x,y)^2$. Therefore we may assume $s'\in k[x,y,a,b]$, and then a minimal generating set of $L$ lies in $k[x,y,a,b,c]$. Then by Proposition~\ref{<=t} we have $\pd(R/I)\leq 5$.\\

\noindent\underline{Case (vii): $J = \left(x,y\right)^3  +(x,y)(ax + by) + \left(a(ax+by)+cxy+dy^2\right)$}\\
\noindent\underline{$+ \left( b(ax+by)-cx^2-dxy \right)$, with $\h(x,y,a,b) = 4$ and $\h(x,y,a,b,c,d) \ge 5$.}\\
As in the previous case, we may assume $a,b,c,d$ all have degree $1$. Moreover, as in Case (vi), if $\h(x,y,u,v)=3$, we may assume $u=x$, so $J+(u,v)=(x,v,y(y^2, by, ab+dy, b^2))$. 
If $\h(x,y,v,a,b,d)\leq 5$, then the statement follows by Proposition \ref{<=t}; if $\h(x,y,v,a,b,d)$ one checks that $\pd(R/J+(u,v))=5$, and since $\pd(R/J)\leq 4$, then by Lemma \ref{ses}(2) we have $\pd(R/L)\leq 4$, so $\pd(R/I)\leq 5$ by Theorem \ref{pd+1}.

Assume $\h(x,y,u,v)=4$ and let $s_1=a(ax+by)+cxy+dy^2$ and $s_2=b(ax+by)-cx^2-dxy$. As above, we first prove $\h(x,y,u,v,a,b)\leq 4$. If not, we may assume that $\h(x,y,u,v,a)=5$.  As above, this implies $H=(u,v)\cap [(x,y)^3 + (x,y)(ax+by)]$  contains no cubics. Then applying Remark \ref{observ}(b) first to $s_2$ and then to $s_1$ we see that either $L$ does not contain two linearly independent cubics or else both $s_1$ and $s_2$ lie in $(u,v) + (x,y)^3 + (x,y)(ax+by)$. Since $(x,y)^3 + (x,y)(ax+by)\subseteq J$, we may modify $c$ and $d$ modulo $(x,y)$ to assume $s_1\in (u,v) + (x,y)(ax+by)$. This implies $a^2\in (y,u,v,ax^2)$, thus $a\in (y,u,v,x)$, contradicting the height assumption. 

Therefore, $\h(x,y,u,v,a,b)=4$ and as above we may assume $a=u$ and $b=v$.  If $\h(x,y,a,b,c,d) = 6$, then all cubics in $L$ are multiples of $ax+by$, a contradiction.  Therefore $J$ is extended from a polynomial ring in $5$ variables, so $\pd(R/I) \le 5$ holds by Proposition~\ref{<=t}.\\



\noindent\underline{Case (viii): $J  = \left(x,y\right)^3 +  (x,y)(ax+by) + [J]_{\ge 4}$,  with $\h(x,y,a,b) = 4$.}\\
Observe that all cubics are contained in $H:= [(x,y)^3+(x,y)(ax+by)]\cap (u,v)$. If $\h(x,y,u,v)=4$
 it follows that all cubics in $L$ are multiples of $ax+by$, contradicting Remark \ref{observ}(a).  
 

Then $\h(x,y,u,v)=3$ and we may assume $u=x$; then 
\[L=(x^3,x^2y,xy^2,x(ax+by))+\left[(x,v)\cap \left(y(y^2,ax+by), J_{\geq 4}\right)\right].\]
 Since $\h(x,y,v)=3$, it follows that $(x,v)\cap (y(y^2,ax+by))=(y)[(x,v)\cap (y^2,ax+by)]$. Now, if $\h(x,y,v,b)=4$, then $ax+by\notin (x,v,y^2)$, thus by Remark \ref{observ}(b), $H_1:=(x,v)\cap (y^2,ax+by)=J_1(ax+by)+(xy^2,vy^2)$ for some proper homogeneous ideal $J_1$; in particular $H_1$ is generated in degree 3 and higher.  Therefore $(x,v)\cap (y(y^2,ax+by))$ is generated in degree at least 4, and so $\left[(x,v)\cap \left(y(y^2,ax+by), J_{\geq 4}\right)\right]$ contains no cubics. Thus all cubics of $L$ are contained in $(x)$, contradicting Remark \ref{observ}(a).

We may then assume $\h(x,y,v,b)=3$, thus after a linear change of variables we may take $b=v$. Then  $L= L' + [L]_{\ge 4}$, where $[L]_{\ge 4}$ is generated in degree at least $4$ and $L' = (x^3,x^2y,xy^2,x(ax+by), y(ax+by), by^3)$. 
Recall $f_1,f_2$ are the regular sequence of two cubics in $I$ defining the link $L\sim I^{un}$, so $f_1,f_2\subseteq L'$ and in particular $f_1,f_2\in k[x,y,a,b]$. Set $J'=(x^3,y(ax+by)):L'=(x^2 , b^2y , bxy, axy+by^2)$, and $H'=(f_1,f_2):L'$, then $H'$ is extended from $k[x,y,a,b]$, and, by Theorem \ref{linkage}(d), $H'$ is generated by one quadric $q$ and forms of degree at least 3 lying in $k[x,y,a,b]$. Since $I^{un}\subseteq H'$, we have that $f_3=\ell q + F_3$ for some quadric $q$ and some cubic $F_3$, both lying in $k[x,y,a,b]$. It follows that $f_1,f_2,f_3\in k[x,y,a,b,\ell]$ and then $\pd(R/I)\leq 5$ by Proposition \ref{<=t}.
\end{proof}

\begin{prop}\label{11;13}
If $I^{un}$ is of type $\langle 1, 1; 1, 3 \rangle$, then $\pd(R/I) \le 5$.
\end{prop}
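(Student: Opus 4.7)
The plan is to mirror Proposition~\ref{L11;13}, replacing the multiplicity-$4$ structure theorem used there by the multiplicity-$3$ structure theorem for $(x,y)$-primary ideals (of the same kind as \cite[Theorem~2.1]{MM}, already invoked via \cite[Prop.~5.2]{MM} in case (2) of Proposition~\ref{e=3}). Write $I^{un}=J\cap(u,v)$ with $J$ being $(x,y)$-primary of multiplicity three, and run a finite case analysis on the normal forms of $J$.

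Before entering the case analysis, two reductions should be made: if $J$ contains a linear form, then after a change of coordinates $J\subseteq(x)$ and a minimal generating set of $I^{un}$ involves only a few variables, so either Theorem~\ref{pd+1}(ii) or Proposition~\ref{<=t} produces the bound; and if $\h(x,y,u,v)\leq 3$, then $I^{un}$ contains a quadric and Theorem~\ref{pd+1}(iii) applies. So one may assume that $J$ contains no linear form and $\h(x,y,u,v)=4$, which forces $(x,y)^n\cap(u,v)=(x,y)^n(u,v)$ for every $n$ and makes the cubic content of $I^{un}$ easy to control.

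For each remaining normal form of $J$ --- typically $J=(x,y)^2$, $J=(x,y)^3+(ax+by)$ with linear $a,b$ and $\h(x,y,a,b)=4$, and a family with higher-degree $(a,b)$ plus a possible $J_{\geq 4}$ tail --- the plan is to apply one of three tools. First, Proposition~\ref{<=t} whenever the minimal generators of $I^{un}$ fit in a regular sequence of at most five forms. Second, Lemma~\ref{ses}(2) applied to $I^{un}=J\cap(u,v)$, using the bound on $\pd(R/J)$ coming from the structure theorem together with a direct computation of $\pd(R/(J+(u,v)))$; because $J+(u,v)$ is $(x,y,u,v)$-primary with an explicit generating set, its projective dimension can typically be shown to be at most $4$. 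Third, when neither of the above lands a bound of $4$ for $\pd(R/I^{un})$, one chooses a complete intersection $C\subseteq I^{un}$ of two cubics and computes the link $L':=C:I^{un}$ by hand; verifying $\pd(R/L')\leq 4$ then yields $\pd(R/I)\leq 5$ via Theorem~\ref{pd+1}(i). Throughout, Lemma~\ref{cubic}(i) will be used to show that in some subcases $[I^{un}]_3$ is contained in a principal ideal, contradicting Remark~\ref{observ}(a) and thereby collapsing the ambient height so that Proposition~\ref{<=t} applies.

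The main obstacle is expected to be the case $J=(x,y)^3+(ax+by)$ with $\deg(a)=\deg(b)\geq 2$, mirroring the most delicate portions of Propositions~\ref{inters} and~\ref{deg2}. There, the interaction between $ax+by$ and $(u,v)$ must be controlled via Lemma~\ref{cubic}(ii) to force $ax+by\in(u,v)$ after a modification modulo $(x,y)^2$; once this is achieved, the modularity law splits $I^{un}$ into pieces extended from a polynomial subring in few variables, and Proposition~\ref{<=t} concludes. Tracking linear combinations among the generators of $I$ --- so that two of them become multiples of a single linear form, as in Corollary~\ref{x} --- may also be needed to handle the boundary subcases cleanly.
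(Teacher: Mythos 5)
Your overall strategy---write $I^{un}=J\cap(u,v)$ with $J$ an $(x,y)$-primary ideal of multiplicity three, invoke the normal forms of \cite[Theorem~2.1]{MM}, and dispatch each form with Proposition~\ref{<=t}, Lemma~\ref{cubic}, Corollary~\ref{x} and an occasional explicit link---is the same as the paper's. But one of your two opening reductions is false, and it eliminates exactly the subcases where the real work lies. The claim that $\h(x,y,u,v)\leq 3$ forces a quadric into $I^{un}$ holds only for those normal forms of $J$ that themselves contain a quadric (forms (i)--(iv)). For $J=(x,y)^3+(ax+by)$ and for forms (vi)--(viii), the hypothesis $\h(x,y,a,b)=4$ forces $a,b$ to be non-units, so $J$ is generated in degrees at least three and $J\cap(u,v)$ contains no quadric regardless of $\h(x,y,u,v)$. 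In the paper the subcase $\h(x,y,u,v)=3$ (i.e.\ $u=x$) of form (vi) is the hardest branch of the whole proposition: it requires normalizing $b=v$ and $f=0$ and then either Proposition~\ref{<=t} or the explicit link $L=(x^3,by^3):I^{un}$ with $\pd(R/L)=3$. Your plan, having assumed $\h(x,y,u,v)=4$ throughout, never reaches this branch. (Relatedly, you predict the hard case to be $J=(x,y)^3+(ax+by)$ with $\deg(a)\geq 2$, which the paper settles in two lines via Remark~\ref{observ}(c); the genuinely delicate case is form (vi).)

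There is a second gap in your toolbox. Applying Lemma~\ref{ses}(2) to $I^{un}=J\cap(u,v)$ at best bounds $\pd(R/I^{un})$, and that does not bound $\pd(R/I)$: Theorem~\ref{pd+1}(i) requires control of $\pd(R/L)$ for the link $L=(f_1,f_2):I$, and a single link need not preserve projective dimension (Theorem~\ref{linkage}(c) is a statement about double links). This is precisely why the paper uses Lemma~\ref{ses}(2) freely in Proposition~\ref{L11;13}, where $K=L$ and Theorem~\ref{pd+1}(i) applies directly, but in the present $I^{un}$ case replaces it with tools that bound $\pd(R/I)$ itself (Remark~\ref{observ}(c), Corollary~\ref{x}, Proposition~\ref{<=t}) or with an explicitly computed link of small projective dimension. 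Finally, note that in form (iv) the paper closes one branch by showing that $(x,u)$ would be a third minimal prime of $I$, contradicting the assumed type $\langle 1,1;1,3\rangle$---a structural argument your proposal does not anticipate and which none of your three listed tools supplies.
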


\begin{proof} 
 We have $I^{un} = J \cap (u,v)$, where $J$ is $(x,y)$-primary for independent linear forms $x$ and $y$ and has one of the eight forms from \cite[Theorem~2.1]{MM} and $u,v$ are independent linear forms.\\
Notice that in cases (vi)--(viii) below, we may assume $a,b,c,d,e,f\in R_1$, because otherwise all cubics in $I^{un}$ are contained in $(x,y)^3$, and thus $\pd(R/I)\leq 2$ by Remark \ref{observ}(c).\\

\noindent\underline{Cases (i), (ii), (iii): $J = (x,y)^2, (x,y^3)$, or $(x^2, xy, ax+y^2)$.}\\
In all three cases, all minimal generators of $J \cap (u,v)$ can be written in terms of at most $5$ linear forms. By Lemma \ref{<=t}, $\pd(R/I) \le 5$.\\

\noindent\underline{Case (iv): $J = (x^2, xy, y^3, ax + by^2)$, where $\h(x,y,a,b) = 4$.}\\
If $\h(x,y,u,v)\leq 3$, then $I^{un}$ contains a quadric and we are done by Proposition~\ref{pd+1}(iii). We may then assume $\h(x,y,u,v)=4$. 
Let $f=ax+by^2$. If $\deg(f)=2$, then $I^{un}$ is extended from $k[x,y,a,u,v]$, thus $\pd(R/I) \le 5$ by Proposition \ref{<=t}. If $\deg(f)\geq 4$, then all cubics in $I^{un}$ are extended from $k[x,y,u,v]$ and $\pd(R/I)\leq 4$ by Remark \ref{observ}(c). We may then assume $\deg(a)=2$ and $\deg(b)=1$. 
If $\h(x,y,u,v,a,b)\geq 5$ then $ax+by^2\notin (u,v,x^2,xy,y^3)$, thus by Remark \ref{observ}(b) all cubics of $I^{un}$ are contained in $(u,v)\cap (x^2,xy,y^3)$ and then are contained in $(x)$, a contradiction. So $a,b\subseteq (x,y,u,v)$. Since $(x^2,xy,y^3)\subseteq J$, we may assume $a,b\subseteq (u,v)$, and after a change of variables we may assume $b = u$. Then $I^{un} = (ax+uy^2)+(x^2, xy, y^3)(u,v)$, and thus every cubic in $I^{un}$ is contained in $(x,u)$, and so $I\subseteq (x,u)$. Then $(x,u)$ is a third minimal prime of $I$ contradicting the assumption on the type of $I^{un}$.\\

\noindent\underline{Case (v): $J = (x,y)^3 + (ax + by)$, where $\h(x,y,a,b) = 4$.}\\
If $\deg(a)=\deg(b)\ge2$, then $J$ is generated by elements of degree at least three, so all cubics of $I$ are extended from $k[x,y,a,b]$ which, by the height assumption, is a polynomial ring in 4 variables; thus $\pd(R/I)\leq 4$ by Remark \ref{observ}(c). We may then assume $\deg(a)=\deg(b)=1$. 
If $\h(x,y,u,v,a,b)=6$, then $I^{un}=(u,v)J$ so its cubics all lie in $(ax+by)$, which is a contradiction. Therefore $\h(x,y,a,b,u,v)\leq 5$, so $I^{un}$ is extended from a polynomial ring in at most 5 variables, thus $\pd(R/I)\leq 5$ by Proposition \ref{<=t}. \\

\noindent\underline{Case (vi): $J = \left(x,y\right)^3 + (x,y)(ax+by) + \left(c(ax+by)+dx^2+exy+fy^2\right)$,}\\
\underline{where $\h(x,y,a,b)= 4$ and  $\h(x,y,c,b^2d-abe+a^2f)=4$.}\\
First assume $\h(x,y,u,v)=4$.  As in Proposition \ref{L11;13}(vi), we must have $\h(x,y,u,v,a,b)\geq 4$. 
So $a,b\subseteq (x,y,u,v)$, and since $(x,y)^3\subseteq J$, we may assume $(a,b)\subseteq (u,v)$. Since $\h(x,y,u,v)=4$, after a change of coordinates we may assume $a=u$ and $b=v$. Then, setting $s=c(ux+vy)+dx^2+exy+fy^2$ we have
\[
I^{un}= (x,y)(ux+vy)+ \big(\left((x,y)^3 + (s)\right) \cap (u,v)\big).
\]
Since $\deg(s)=3$ and $(x,y)^3\cap (u,v)$ contains no cubics, if $s\notin (x,y)^3 + (u,v)$ then by Lemma \ref{cubic}(i) then $\left[I^{un}\right]_3\subseteq (ax+by)$, contradicting Remark \ref{observ}(a). Then $s\in (x,y)^3+(u,v)$, so there exists a cubic $s'\in J\cap (u,v)$ such that 
\[ I^{un}=(x,y)(ax+by) + \big( \left((x,y)^3+(s')\right)\cap (u,v) \big) = (x,y)(ax+by)+ (s') + (x,y)^3(u,v).\]
For degree reasons $I\subseteq (x,y)(ax+by)+ (s')$, so after possibly taking linear combinations of the generators of $I$, we may assume two of the three cubics in $I$ are multiple of $g=ax+by$, so the statement follows by Corollary \ref{x}.

We may then assume $\h(x,y,u,v) = 3$ and $x = u$, so 
\[
I^{un}=(x^3, x^2y,xy^2, x(ax+by)) + \left((y^3) + y(ax+by) + (F)\right) \cap (x,v).
\] 
where $F=c(ax+by)+dx^2+exy+fy^2$. 
If $\h(x,y,v,b)=4$, then no cubics lie in $\left(y^3, y(ax+by)\right)\cap (u,v)$ and so Lemma \ref{cubic}(i) yields $I\subseteq (x^3, x^2y,xy^2, x(ax+by), F')$ for some cubic $F'$. 
Then there is a minimal generating set of $I$ where two generators lie in $(x)$, and the statement follows by Corollary \ref{x}.

We may then assume $\h(x,y,v,b)=3$ and since $\h(x,y,v)=\h(x,y,b)=3$, we may assume $b\in (v,x,y)$ and $b\notin (x,y)$. After modifying $e$, $f$ and $v$, we may assume $b=v$. Since $F\in (y^3, y(ax+by), x,b)$, we have $fy^2\in (x,b,y^3)$, thus $f\in (x,y,b)$. Since $(x,y)^3\subseteq J$, we may assume $f\in (b)$. After possibly modifying $c$ modulo $y$ and $e$, we may further assume $f=0$. 
We now have
\begin{align*}
I^{un} &= (x,b) \cap \left[(x,y)^3 + (x,y)(ax+by) + (c(ax+by) + dx^2 + exy)\right]\\
 &= (x^3, x^2y, xy^2, x(ax+by), y(ax+by), c(ax+by) + dx^2 + exy).
 \end{align*}
If $\h(x,y,a,b,c,d,e) \ge 6$, one checks that if $L = (x^3, by^3):I^{un}$, then
\[L  = (x^{3},b y^{3},x^{2} y^{2},a x^{2} y-b x y^{2},a^{2} c x^{2}-a
       b c  x y- b d x^{2} y+ b^{2} c y^{2}+ be x y^{2}),\]
       and $\pd(R/L) = 3$. This implies $\pd(R/I) \le 4$ by Theorem~\ref{pd+1}(i).  Otherwise, all cubics in $I$ can be expressed in terms of at most $5$ variables and $\pd(R/I)\leq 5$ by Proposition~\ref{<=t}. \\

\noindent\underline{Case (vii): $J = \left(x,y\right)^3  +(x,y)(ax + by) + \left(a(ax+by)+cxy+dy^2\right)$}\\
\noindent\underline{$+ \left( b(ax+by)-cx^2-dxy \right)$, with $\h(x,y,a,b) = 4$ and $\h(x,y,a,b,c,d) \ge 5$.}\\
If $\h(x,y,a,b,c,d) = 6$, then the last two generators of $J$ cannot lie in $(u,v)$, 
so $I^{un}\subseteq \left(x,y\right)^3  +(x,y)(ax + by)$. Then all cubics in $I$ can be expressed in terms of $x,y,a,b$.  Otherwise, $\h(x,y,a,b,c,d) \le 5$, and similarly all cubics in $I$ can be written in terms of at most 5 variables. In either case $\pd(R/I) \le 5$ by Remark \ref{observ}(c). \\

\noindent\underline{Case (viii): $J  = \left(x,y\right)^3 +  (x,y)(ax+by) + [J]_{\ge 4}$,  with $\h(x,y,a,b) = 4$.}\\
In this case, all the cubics in $I^{un}$ can be expressed in terms of $x,y,a,b$, so $\pd(R/I) \le 4$ by Remark \ref{observ}(c).

\end{proof}

\subsection{Type $\boldsymbol{\langle 1,3;1,1\rangle}$}

\begin{prop}\label{L13;11}  If $L$ is of type $\langle 1,3;1,1 \rangle$, then $\pd(R/I) \le 5$.
\end{prop}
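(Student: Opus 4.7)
The plan is to reduce to the case where $\mathfrak{q}$ is non-degenerate and $L$ contains no quadric, and then exploit the Hilbert--Burch structure of $\mathfrak{q}$. Write $L=(x,y)\cap \mathfrak{q}$ with $(x,y)$ a linear prime of height two and $\mathfrak{q}$ a height-two prime with $e(R/\mathfrak{q})=3$. If $\mathfrak{q}$ contains a linear form, Theorem~\ref{pd+1}(ii) yields $\pd(R/I)\le 3$; if $L$ contains a quadric, Theorem~\ref{pd+1}(iii) yields $\pd(R/I)\le 4$. So I assume neither, whence by Proposition~\ref{non-deg}(3) the ring $R/\mathfrak{q}$ is Cohen--Macaulay. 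Combined with the classification of codimension-two varieties of minimal degree, this forces $\mathfrak{q}=I_2(M)$ for a $2\times 3$ matrix $M$ of linear forms, so $\pd(R/\mathfrak{q})=2$ by Hilbert--Burch and $\mathfrak{q}$ is minimally generated by three linearly independent quadrics $q_1,q_2,q_3$.

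The crux is the claim that $\h((x,y)+\mathfrak{q})=4$, equivalently that $\bar\mathfrak{q}:=I_2(\bar M)$ has height two in the polynomial ring $R/(x,y)$. Suppose instead that $\h(\bar\mathfrak{q})=1$; since $R/(x,y)$ is a UFD the three quadric minors $\bar q_1,\bar q_2,\bar q_3$ share a nontrivial common factor $\bar d$. If $\deg \bar d=2$, write $\bar q_i=\alpha_i\bar d$ with $\alpha_i\in k$; then for any $i\ne j$ the element $\alpha_j q_i-\alpha_i q_j$ lies in $(x,y)\cap \mathfrak{q}=L$ and is a nonzero quadric by linear independence of the $q_i$, contradicting our reduction. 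If $\deg \bar d=1$, I would use an elementary row operation over $R/(x,y)$, lifted to $M$ in $R$ so that $I_2(M)=\mathfrak{q}$ is unchanged, to replace the second row of $\bar M$ by $\bar d(\mu_1,\mu_2,\mu_3)$ for scalars $\mu_i$ not all zero (``not all zero'' because otherwise the rows of $\bar M$ would be proportional, forcing $\bar\mathfrak{q}=0$ and hence $\mathfrak{q}=(x,y)$). A direct computation then gives the Pl\"ucker-type identity $\mu_3\bar q_1-\mu_2\bar q_2+\mu_1\bar q_3=0$ in $R/(x,y)$, so $\mu_3 q_1-\mu_2 q_2+\mu_1 q_3$ is a nonzero quadric in $L$ --- again a contradiction.

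Therefore $x,y$ is a regular sequence on the Cohen--Macaulay domain $R/\mathfrak{q}$, giving $L=(x,y)\cap \mathfrak{q}=(x,y)\mathfrak{q}$. In $R/(x,y)$ the ideal $\bar\mathfrak{q}$ is now a perfect grade-two ideal, so Hilbert--Burch gives $\pd_{R/(x,y)}((R/(x,y))/\bar\mathfrak{q})=2$, whence $\pd_R(R/((x,y)+\mathfrak{q}))=4$. Applying Lemma~\ref{ses}(2) to
\[0\to R/L\to R/(x,y)\oplus R/\mathfrak{q}\to R/((x,y)+\mathfrak{q})\to 0\]
yields $\pd(R/L)\le \max\{2,2,3\}=3$, so Theorem~\ref{pd+1}(i) gives $\pd(R/I)\le 4\le 5$. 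The main obstacle is the row-operation and cofactor analysis showing that $\h(\bar\mathfrak{q})=1$ always forces a quadric into $L$: one has to lift row operations from $R/(x,y)$ back to $R$ so that $I_2(M)=\mathfrak{q}$ is preserved, and verify that the Pl\"ucker combination is genuinely nonzero (using the linear independence of $q_1,q_2,q_3$ and that not all $\mu_i$ vanish).
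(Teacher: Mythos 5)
Your reduction to $\mathfrak{q}=I_2(M)$ and your treatment of the case $\h(\mathfrak{q}+(x,y))=4$ are correct and agree with the paper, but the crux claim --- that $L$ containing no quadric forces $\h(\mathfrak{q}+(x,y))=4$ --- is false, and the $\deg\bar d=1$ branch is where it breaks. When the three minors of $\bar M$ share a linear common factor $\bar d$, there is in general no elementary row operation putting a row of $\bar M$ into the form $\bar d(\mu_1,\mu_2,\mu_3)$ with scalar $\mu_i$: the cofactors of $\bar d$ can be genuinely linear. Concretely (this is case (3) of \cite[Lemma 4.1]{HMMS3}, quoted in the paper's proof of Proposition~\ref{13;11}), take
\[
\bar M=\begin{pmatrix} a&b&0\\ d&0&b\end{pmatrix},\qquad \h(a,b,d)=3,
\]
so that $I_2(\bar M)=(-bd,\,ab,\,b^2)=b\,(a,b,d)$ has height one with linear common factor $b$. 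The reduced minors $bd$, $ab$, $b^2$ are $k$-linearly independent, so no nonzero $k$-linear combination of $q_1,q_2,q_3$ lies in $(x,y)$; since $[\mathfrak q]_2$ is exactly the $k$-span of $q_1,q_2,q_3$, the ideal $L=\mathfrak q\cap(x,y)$ contains no quadric even though $\h(\mathfrak q+(x,y))=3$. Your Pl\"ucker-type contradiction therefore never materializes, and no row operation repairs the normalization, since it would force a relation $d+\lambda a\in k\cdot b$ contradicting $\h(a,b,d)=3$.

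So the case $\h(\mathfrak q+(x,y))=3$ must be handled on its own, and it is precisely where the bound $5$ (rather than $4$) is needed. The paper does this by attacking $Q=\mathfrak q+(x,y)$ directly: the image of $\mathfrak q$ in $R/(x,y)$ lies in a principal ideal $(e)$ with $e$ linear or quadric; if $\deg(e)=2$ then $Q=(x,y,e)$ and $\pd(R/Q)=3$, while if $\deg(e)=1$ then $Q=(x,y,ea,eb,ec)$ and the sequence $0\to R/(Q:e)\to R/Q\to R/(Q+(e))\to 0$, with $Q:e=(x,y,a,b,c)$ and $Q+(e)=(x,y,e)$, gives $\pd(R/Q)\le 5$. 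Lemma~\ref{ses}(2) then yields $\pd(R/L)\le 4$ and Theorem~\ref{pd+1}(i) gives $\pd(R/I)\le 5$. You would need to add this branch to complete your argument; the portion you did prove only covers $\h(Q)=4$.
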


\begin{proof} By assumption, $L$ is the intersection of a height $2$ multiplicity $3$ prime ideal $P$ and a linear prime $(x,y)$. Since $k=\overline{k}$, the ideal $P$ is either generated by the $2\times 2$ minors of a $2\times 3$ matrix ${\mathbf{M}}$ of linear forms, say $P = I_2(\mathbf{M})$, or a complete intersection generated by a linear form $z$ and a cubic form $c$. (See e.g. \cite[Theorem~1]{EH2}.)  If $P = (z,c)$, then $L$ contains a quadric and we are done by Theorem~\ref{pd+1}(iii). So we may assume that $P = I_2(\mathbf{M})$; we observe that since $I_2(\mathbf{M}) \not\subseteq (x,y)$, we have $\h(P + (x,y)) = 3$ or $4$.  Since both $P$ and $(x,y)$ are Cohen-Macaulay of height $2$, if we set $Q=P+(x,y)$, then by Lemma \ref{ses} and Theorem \ref{pd+1}(i) it suffices to show $\pd(R/Q)\leq 5$.

 If $\h(Q) = 4$, then $x,y,$ is an $R/P$-regular sequence and then $\pd(R/Q) = 4$. If $\h(Q) = 3$, then the image of $P$ has height one in $R/(x,y)$, i.e. $P\subseteq (e,x,y)$ where $e$ is either a quadric or a linear form.  If $\deg(e)=2$, since $P$ is generated in degree 2, then  $Q=P + (x,y) = (x,y,e)$, so $\pd(R/Q) = 3$.  If $\deg(e) = 1$ then 
 we have $Q=P + (x,y) = (x,y, ea, eb, ec)$ for linear forms $a,b,c$.  Applying Lemma~\ref{ses} to the short exact sequence
 \[ 0 \to R/(Q:e) \to R/Q \to R/Q+(e) \to 0,\]
  one sees that $\pd(R/Q) \le 5$. This concludes the proof.
\end{proof}

\begin{prop}\label{13;11} Suppose $I^{un}$ is of type $\langle 1,3;1,1 \rangle$.  Then $\pd(R/I) \le 5$
\end{prop}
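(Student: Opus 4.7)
The plan is to adapt the argument of Proposition~\ref{L13;11} essentially verbatim, since the analysis of an unmixed ideal of the form $P\cap(x,y)$, with $P$ a height-two prime of multiplicity three, does not depend on whether we think of the intersection as $I^{un}$ or as the link $L$. By \cite[Theorem~1]{EH2}, either $P=(z,c)$ is a complete intersection of a linear form and a cubic, or $P=I_2(\mathbf{M})$ for some $2\times3$ matrix $\mathbf{M}$ of linear forms. In the first case $zx$ is a quadric in $I^{un}=P\cap(x,y)$, so Theorem~\ref{pd+1}(iii) gives $\pd(R/I)\le 4$ immediately.

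Assume then $P=I_2(\mathbf{M})$. The strategy is to combine two standard facts into the chain
\[\pd(R/I)\ \le\ \pd(R/L)+1\ =\ \pd(R/I^{un})+1,\]
where the inequality is Theorem~\ref{pd+1}(i) and the equality is Theorem~\ref{linkage}(c) applied to the link $I^{un}\sim L$. Thus it suffices to prove $\pd(R/I^{un})\le 4$. Since $R/P$ is Cohen--Macaulay with $\pd(R/P)=2$ by the Hilbert--Burch theorem, and $\pd(R/(x,y))=2$, applying Lemma~\ref{ses}(2) to the short exact sequence
\[0\lra R/I^{un}\lra R/P\oplus R/(x,y)\lra R/Q\lra 0,\qquad Q:=P+(x,y),\]
reduces the whole proof to showing $\pd(R/Q)\le 5$.

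To bound $\pd(R/Q)$, I would split on $\h(Q)\in\{3,4\}$, exactly as in Proposition~\ref{L13;11}. If $\h(Q)=4$, then $x,y$ is a regular sequence on the Cohen--Macaulay ring $R/P$, so $R/Q$ is Cohen--Macaulay of codimension four and $\pd(R/Q)=4$. If $\h(Q)=3$, then the image of $P$ in the polynomial ring $R/(x,y)$ is a height-one unmixed ideal generated by (images of) quadrics, hence principal and generated by a form $e$ of degree $1$ or $2$; thus $P\subseteq(e,x,y)$. When $\deg(e)=2$ we get $Q=(x,y,e)$, so $\pd(R/Q)=3$. When $\deg(e)=1$ we have $Q=(x,y,ea,eb,ec)$ for linear forms $a,b,c$, and Lemma~\ref{ses}(1) applied to the element $e$ yields $\pd(R/Q)\le\max\{\pd(R/(Q:e)),\pd(R/(Q+(e)))\}\le 5$: indeed $Q+(e)=(x,y,e)$ has projective dimension $3$, and $Q:e$ is either the unit ideal (if $e\in(x,y)$) or equals $(x,y,a,b,c)$, which is generated by at most five linear forms and so has projective dimension at most $5$.

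The main obstacle is really just the bookkeeping in the height split for $Q$; no new structural input beyond what was already used for Proposition~\ref{L13;11} is needed, because the ideal $Q=P+(x,y)$ does not remember whether we started from $I^{un}$ or from $L$, so the bound $\pd(R/Q)\le 5$ established there transfers directly.
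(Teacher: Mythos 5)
The reduction at the heart of your argument --- the chain $\pd(R/I)\le\pd(R/L)+1=\pd(R/I^{un})+1$ --- does not work, and this is a genuine gap rather than a citation slip. Theorem~\ref{linkage}(c) asserts that projective dimension is preserved under a \emph{double} link $J\stackrel{C}{\sim}H\stackrel{D}{\sim}L$; it says nothing about a single link, and in general projective dimension is only an invariant of the \emph{even} liaison class (a single link dualizes the intermediate local cohomology modules, which can change the depth). Since $I^{un}$ and $L=(f_1,f_2):f_3$ are only singly linked, the equality $\pd(R/L)=\pd(R/I^{un})$ is unjustified. The architecture of the paper itself shows the authors do not have this equality at their disposal: Propositions~\ref{L13;11} and~\ref{13;11} (and likewise \ref{L11;13} versus \ref{11;13}) are separate statements with genuinely different proofs, and many arguments (e.g.\ Lemma~\ref{lin2}, Proposition~\ref{112;111}) split into ``if $K=L$'' and ``if $K=I^{un}$'' subcases precisely because a bound on $\pd(R/I^{un})$ does not directly bound $\pd(R/L)$. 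The legitimate ways to transfer information from $I^{un}$ to $I$ are either to compute an explicit second link $L'$ of $I^{un}$ and bound $\pd(R/L')$ (then $L\sim I^{un}\sim L'$ is a double link and Theorem~\ref{linkage}(c) applies), or to use tools that bound $\pd(R/I)$ directly from the structure of $I^{un}$ (Remark~\ref{observ}(c), Proposition~\ref{<=t}, Corollary~\ref{x}).

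Your treatment of $Q=P+(x,y)$ is fine and mirrors Proposition~\ref{L13;11}, but via Lemma~\ref{ses}(2) it only yields $\pd(R/I^{un})\le 4$, which by the above is not enough. The paper's actual proof works harder at exactly this point. When $\h(Q)=4$ it computes the minimal free resolution of $R/I^{un}$ by a mapping cone, identifies its last differential $\partial_3$ with the last differential $d_4$ of the resolution of the Cohen--Macaulay ideal $Q$, deduces $\pd(\coker(\partial_3^{\ast}))=4$, and invokes \cite[Proposition 3.7]{HMMS3} to conclude $\pd(R/I)\le 4$. When $\h(Q)=3$ it uses \cite[Lemma 4.1]{HMMS3} to put $\mathbf{M}$ into one of three normal forms modulo $(x,y)$, finding in two of them a quadric in $I^{un}$ (so Theorem~\ref{pd+1}(iii) applies) and in the third that $I^{un}$ is extended from five variables (so Proposition~\ref{<=t} applies). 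Some replacement for these steps is required; the bound $\pd(R/Q)\le 5$ alone cannot close the argument.
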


\begin{proof}
By assumption, $I^{un}$ is the intersection of a height $2$ multiplicity $3$ prime ideal $P$ and a linear prime $(x,y)$.  As above, $P$ is either generated by the $2\times 2$ minors of a $2\times 3$ matrix ${\mathbf{M}}$ of linear forms or a complete intersection generated by a linear form $z$ and a cubic form $c$.  If $P = (z,c)$, then $J$ contains a quadric and we are done by Theorem~\ref{pd+1}(iii).  So we can assume that $P = I_2(\mathbf{M})$, and as above $\h(P+(x,y))=3$ or $4$.

  If $\h(P + (x,y)) = 4$, then $x,y$ form a regular sequence on $P = I_2(M)$. 
  It follows that $I^{un} = P(x,y)$ and that $R/(P + (x,y))$ has resolution given by the tensor product of the resolutions of $R/P$ and $R/(x,y)$.  Hence $R/(P + (x,y))$ has free resolution $H_\*$ of the form:
  \[  0 \rightarrow R(-5)^2  \overset{d_4}{\longrightarrow} R^7(-4) \rightarrow R^8(-3)  \oplus R(-2) \rightarrow R^3(-2) \oplus R^2(-1) \rightarrow R.\]
In particular, $\pd(R/(P + (x,y))) = 4$.
Now consider the short exact sequence
\[ 0 \to R/I^{un} \to R/P \oplus R/(x,y) \to R/(P + (x,y)) \to 0.\]
The middle term has free resolution $G_\*$ of the form
\[  0 \rightarrow R(-3)^2\oplus R(-2) \rightarrow R^3(-2) \oplus R^2(-1) \rightarrow R^2.\]
Therefore $\pd(R/I^{un}) = 3$.  Let $F_\*$ be the minimal free resolution of $I^{un}$, so that the mapping cone $\mathrm{cone}(F_\* \to G_\*)$ is a (possibly nonminimal) free resolution of $R/(P + (x,y))$.  Comparing ranks and degrees we deduce that $F_\*$ has the form:
\[  0 \rightarrow R(-2)^5  \overset{\partial_3}{\longrightarrow} R^7(-4) \rightarrow R^6(-3) \rightarrow R.\]
It follows that $\mathrm{cone}(F_\* \to G_\*)$ is minimal beyond the $0$th homological degree and hence  $\partial_3 \cong d_4$.  
Since $P + (x,y)$ is Cohen-Macaulay of height $4$, the dual $H_\*^\ast$ of $H_\*$ is a minimal free resolution of $\coker(d_4^\ast)$.  Hence $\pd(\coker(\partial_3^\ast)) = \pd(\coker(d_4^\ast)) = 4$.  By \cite[Proposition 3.7]{HMMS3}, we have $\pd(R/I) \le 4$.

If $\h(P + (x,y)) = 3$, then by \cite[Lemma 4.1]{HMMS3}, if we consider $\mathbf{M}$ modulo $(x,y)$ it has one of the following three forms:
\begin{enumerate}
\item $\mathbf{M} = \begin{pmatrix} a&0&0\\d&e&f\end{pmatrix}$, where $a \neq 0$ and $\h(e,f) = 2$;
\item $\mathbf{M} = \begin{pmatrix} a&b&0\\d&e&0\end{pmatrix}$, where $ae-bd \neq 0$;
\item $\mathbf{M} = \begin{pmatrix} a&b&0\\d&0&b\end{pmatrix}$, where $\h(a,b,d) = 3$.
\end{enumerate}
The ideal $P$ has 3 quadric generators. However, in cases (1) and (2) the image of $P$ in $R/(x,y)$ has at most two generators; therefore $I^{un} = P \cap (x,y)$ contains at least one quadric.  Thus, $\pd(R/I) \le 4$ by Theorem~\ref{pd+1}(iii). 
In case (3), all generators of $P \cap (x,y)$ can be expressed in terms of at most $5$ variables $a,b,d,x,y$.  By Proposition~\ref{<=t}, $\pd(R/I) \le 5$.  


\end{proof}


\begin{thebibliography}{99}

\bibitem{AH}{ T. Ananyan and M. Hochster, Ideals generated by quadratic polynomials, {\it Math. Res. Lett.} {\bf 19} (2012), 233--244.}

\bibitem{AH2}{T. Ananyan and M. Hochster, Small Subalgebras of Polynomial Rings and Stillman's Conjecture, preprint: 	arXiv:1610.09268.}

\bibitem{AN}{ M. Artin and M. Nagata, Residual intersections in Cohen-Macaulay rings. {\it J. Math. Kyoto Univ.} {\bf 12} (1972), 307--323.}

\bibitem{BH}{ W. Bruns and J. Herzog, Cohen--Macaulay Rings, {\it Cambridge Studies in Advanced Mathematics} {\bf 39}, Cambridge University Press, Cambridge, 1993.}

\bibitem{BS}{ M. Brodmann and P. Schenzel, On varieties of almost minimal degree in small codimension,  {\it J. Algebra} {\bf 305} (2006), 789--801.}

\bibitem{Du}{ P. Dubreil, Quelques Propriet\'es des vari\'et\'es alg\'ebriques, {\it Act. Scient. Ind.} {\bf 210}, Hermann, Paris (1935).}

\bibitem{EH2}{ D. Eisenbud and J. Harris, On varieties of minimal degree (a centennial account), {\it Proc. Sympos. Pure Math.} {\bf 46} (1987), 3--13.}

\bibitem{En1}{ B. Engheta, On the projective dimension and the unmixed part of three cubics, {\it J. Algebra} {\bf 316} (2007), 715--734.}

\bibitem{En2}{ B. Engheta, Bound on the multiplicity of almost complete intersections, {\it Comm. Alg.} {\bf 37} (2009) 948--953.} 

\bibitem{En3}{ B. Engheta, A bound on the projective dimension of three cubics, {\it J. Symb. Comp.} {\bf 45} (2010) 60-€"-73.} 

\bibitem{HMMS1}{ C. Huneke, P. Mantero, J. McCullough and A. Seceleanu, The projective dimension of codimension two algebras presented by quadrics, {\it J. Algebra} {\bf 393} (2013), 170--186.}

\bibitem{HMMS2}{ C. Huneke, P. Mantero, J. McCullough and A. Seceleanu, A multiplicity bound and a criterion for the Cohen-Macaulayness of ideals, {\it Proc. Amer. Math. Soc.} {\bf 143} (2015), no. 6,  2365--2377.}

\bibitem{HMMS4} C. Huneke, P. Mantero, J. McCullough and A. Seceleanu, Multiple structures with arbitrary large projective dimension supported on linear subspaces,  {\it J. Algebra} {\bf 447} (2016), 183--205.

\bibitem{HMMS3}{ C. Huneke, P. Mantero, J. McCullough and A. Seceleanu, A tight bound on the projective dimension of 4 quadrics, to appear in {\it J. Pure Appl. Algebra}.}

\bibitem{MM}{ P. Mantero and J. McCullough, A finite classification of $(x,y)$-primary ideals of low multiplicity, to appear in {\it Collect. Math.}}

\bibitem{MS}{ J. McCullough and A. Seceleanu, Bounding projective dimension, {\it Commutative Algebra}, Springer--Verlag London Ltd., London, 2012.}

\bibitem{PS}{ I. Peeva and M. Stillman, Open problems on syzygies and Hilbert functions, {\it J. Commut. Algebra} {\bf 1} (2009), no. 1, 159--195.}

\bibitem{PSz}{ C. Peskine and L. Szpiro, Liaison des vari\'et\'es algebriques, {\it Invent. Math.} {\bf 26} (1974), 271--302.}


\end{thebibliography}
\end{document}